\newtheorem{lemma}{Lemma}
\newtheorem{definition}{Definition}
\newtheorem{claim}{Claim}
\newtheorem{corollary}{Corollary}
\newtheorem{theorem}{Theorem}
\newtheorem{proposition}{Proposition}
\newtheorem{remark}{Remark}
\newtheorem{question}{Question}
\newtheorem*{main-conjecture}{Main conjecture}
\newtheorem{example}{\bf Example}
 \def\BB{{\mathcal B}}
 \def\DD{{\mathbb D}}
 \def\ZZ{{\mathbb Z}}
 \def\RR{{\mathbb R}}
  \def\CC{{\mathbb C}}
 \def\la{\lambda}
  \def\ga{\gamma}
  \def\de{\delta}
  \def\La{\Lambda}
  \def\eps{\epsilon}
\title{Generalized hyperbolicity for linear operators }
\author{Patricia   Cirilo\,\,\,\,\,\,\,\, Bryce Gollobit\,\,\,\,\,\,\,\, Enrique  Pujals }
\begin{document}

\maketitle

\begin{abstract} It is introduced an open  class of linear operators on Banach and Hilbert spaces such that their non-wandering set is an infinite dimensional topologically mixing subspace. In certain cases, the non-wandering set coincides with the whole space.

\end{abstract}

\section{Introduction}
The present paper deals with  dynamics of  linear operators on infinite dimensional Banach spaces and how the dynamics changes for nearby linear operators. 

First, the notion of hyperbolicity is generalized (named generalized hyperbolicity) and it is shown that they are open in the norm topology. This class contains the classical hyperbolic ones and a  new one called shifted hyperbolic (which are non-hyperbolic and contains certain type of  weighted shifts).

After defining the ``bounded set" of a linear automorphism as the set of points in the space that it has infinitely many forward and backward iterates with bounded norm (that could depend on the initial point), it is proved  that  the bounded set of a generalized hyperbolic operator coincides with the non-wandering set and it  is a transitive set. Moreover,  the dynamic restricted to the bounded set is chaotic,  the shadowing property holds and periodic points are dense. Of course, in the hyperbolic setting the bounded set is reduced to the zero. However, this is not the case for shifted hyperbolic operator; their bounded set  is an infinitely dimensional subspace. In particular, it is also characterized when  the bounded set of  a  shifted hyperbolic   is dense in the whole  space.    In short, it is defined a class of linear operators that on one side form an open set in the space of linear dynamics with the norm topology and exhibiting rich dynamical behavior.

It follows from the results in \cite{FSW} that robustly transitive operators (acting on the full space) do not exist. However, for generalized  hyperbolic such that its bounded set is dense in the whole space (as it is the case of the strong shifted ones), it is possible to show that the bounded set of a nearby operator is ``large".

The present approach relies more on  dynamical systems techniques instead of classical spectral ones.

\section{Notations, definitions and main theorems}
\label{definitions}

Denote a (not necessarily separable) Banach space by $\BB$, the set of bounded linear operators from $\BB$ to itself by $L(\BB)$, and the set of linear automorphisms of $\BB$ by $L_{aut}(\BB)$.


\begin{definition} Let $T$ be a linear automorphism of  Banach space $\BB$. It is said that T is generalized  hyperbolic if  there exists a decomposition of $\BB$ in complementary closed subspaces, $\BB = E^-\oplus E^+$, such that
\begin{enumerate}
\item $T(E^+)\subset E^+$ and $T^{-1}(E^-)\subset E^-$;
    \item $T_{|E^+}$ and $T^{-1}_{|E^-}$ are uniform contractions. 
    
\end{enumerate}

\end{definition}

If the splitting is invariant by $T$, then $T$ is hyperbolic in the usual sense. We do not require that both subspaces are non-trivial, so both uniform contractions and expansions are generalized hyperbolic operators. Note that if the estimate $|T^n|_{E^+_x}|\leq C\lambda^n|x|$ holds for some $C \geq 1$ and $\lambda \in (0,1)$, then $\BB$ can be renormed so that $C = 1$. We will always assume this has been done. The definition is equivalent to requiring that the spectrum of $T|_{E^+}$ and $T^{-1}|_{E^-}$ are contained strictly inside the unit disc, by the spectral radius formula. 

The definition is motivated by the classic weighted shift and operator with that type of properties were studied in \cite{BCDMP} where it is proved that they have the shadowing property (the definition is given below).  In section \ref{sec.examples} are provided different types of examples of generalized hyperbolic operators.



First, we introduce a set that contains all the relevant ``bounded dynamics".

\begin{definition}
Given $T \in L_{aut}(\BB)$, the bounded set, denoted by $B(T)$, is the set of points $x$ for which there exist $K=K(x)>0$ and infinite sequences of positive integers $\{ k_n\}$ and $\{ m_n\}$ such that, 

\begin{enumerate}
\item $|T^{k_n}(x)|< K$ and 

\item $|T^{-m_n}(x)|< K.$  

\end{enumerate}
Moreover, let $B_N(T):=\{x: |T^n(x)|< N, \,\,\forall\, n\in \ZZ\}.$ 
The complement of $B(T)$, denoted as $UB(T)$,  is named the unbounded set of $T$ and  it can be written as the union of the sets $UB^+(T):=\{y: \lim_{n\to +\infty} |T^n(y)|=\infty \}$ and $UB^-(T)=\{ y: \lim_{n\to +\infty} |T^{-n}(y)|=\infty\}.$ 
\end{definition}

Recurrent points, points whose orbits accumulate on themselves, are contained in the bounded set. In general, $\overline{B(T)}$ is not a subspace.

\begin{definition} Given $T \in L( \BB )$ and an invariant set  $\La$ of  $T$, it is said that   $T_{|\La}$ is transitive if for any pair of open sets of $\La$, there is a forward iterate of one that intersects the other. It is said that $T_{|\La}$ is topologically mixing if for any pair of open sets in $\La$, any sufficiently large iterate of one set intersects the other. 
\end{definition}

Note that $T$ is transitive if and only if it has a dense orbit, provided $\BB$ is separable.

\begin{definition}
A point $x$ is called a non-wandering point if for any neighbourhood $U$ of $x$ there exists $k \geq 1$ such that $T^k(U)\cap U \not= \emptyset$. The set of non-wandering points for a map $T$ is denoted by $\Omega(T)$.
\end{definition}

Observe that the non-wandering set is not necessarily a subspace.

Whenever it is considered numerical simulation, it becomes relevant to deal with ``pseudo-orbits" and when they  are ``traced by true orbits":

\begin{definition}
Given a metric space and an homeomorphism $T$, a sequence $\{x_n\}_{n \in \mathbb{Z}}$ is a $\delta$-pseudo-orbit if there is a $\delta > 0$ such that $d(x_{n+1}, T(x_n)) < \delta$
for all   $n \in\mathbb{Z}$. A point $y$ is said to $\varepsilon$-shadow the pseudo-orbit $\{x_n\}$ if for all $n \in \mathbb{Z}$ 
$d(x_n, T^n(y))< \varepsilon$. 
\end{definition}

One can wonder about   systems with the 
property  in which numerical simulation does not introduce unexpected
behavior, in the sense that simulated orbits actually ‘follow’ real orbits.

\begin{definition}
Given a metric space and a homeomorphism $T$, it is said that $T$  have the shadowing property if given $\varepsilon > 0$, there is a $\delta > 0$ such that every $\delta$-pseudo-orbit is $\varepsilon$-shadowed by a point in $X$. If there is a constant $K > 0$ so that the $\delta$ and $\varepsilon$ can be chosen so that $\varepsilon < K \delta$, then $f$ is said to have the Lipshitz shadowing property. 
\end{definition}

Next theorem, summarize the most relevant dynamical properties of a generalized hyperbolic operator.

\begin{theorem}\label{bounded-transitive} Let $T$ be a  generalized hyperbolic operator, then:

\begin{enumerate}
\item \label{item tp}  $T_{|\overline{B(T)}}$ is topologically mixing;   

\item \label{item pp} the periodic points are dense in $B(T)$, $\Omega(T)=\overline{B(T)}$  and $\overline{B(T)}$ is a subspace;

\item \label{item BN} $\overline{B(T)}=\overline{\cup_{N>0} B_N(T)}$ and for any $N>0$ there exists $N'>N$ and $x\in B_{N'}(T)$ such the closure of its  orbit contains  $B_N(T)$;



\item \label{item sh} $T$ satisfies the Lipschitz shadowing property;

\item \label{item ub} the set $UB^+(T)\cap UB^-(T)$ is dense in $\BB$.

\end{enumerate}

\end{theorem}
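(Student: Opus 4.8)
The backbone of every part is the block structure forced by the two invariances. Writing $\pi^\pm$ for the (bounded) projections onto $E^\pm$ and setting $A=\pi^+T|_{E^+}$, $B=\pi^+T|_{E^-}$, $D=\pi^-T|_{E^-}$, the inclusion $T(E^+)\subset E^+$ gives $\pi^-T\pi^+=0$, so $\pi^-T=D\pi^-$, whence $\pi^-T^n=D^n\pi^-$, while $T|_{E^+}=A$ with $|A^n|\le\la^n$. Dually, $T^{-1}(E^-)\subset E^-$ makes $T^{-1}$ triangular in the opposite corner: with $M:=T^{-1}|_{E^-}$ one has $|M^n|\le\la^n$ and $\pi^+T^{-n}=P^n\pi^+$ for $P:=\pi^+T^{-1}|_{E^+}$, together with the mutual relations $DM=\mathrm{id}_{E^-}$ and $PA=\mathrm{id}_{E^+}$. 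In particular $\|D^n\|\ge\la^{-n}$ and $\|P^n\|\ge\la^{-n}$, so $D$ and $P$ are the expanding partners of the contractions $A$ and $M$. My plan is to prove the shadowing statement first and use it, together with this structure, as the engine for the recurrence statements.

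For Lipschitz shadowing (item \ref{item sh}) I would solve the shadowing equation explicitly. Given a $\de$-pseudo-orbit $\{x_n\}$, put $e_n=x_{n+1}-Tx_n$ (so $|e_n|<\de$) and look for $y$ with $\xi_n:=T^ny-x_n$ bounded; this forces $\xi_{n+1}=T\xi_n-e_n$. Splitting $\xi_n=u_n+v_n$ and $e_n=a_n+b_n$ along $E^+\oplus E^-$, the recursion decouples into $u_{n+1}=Au_n+(Bv_n-a_n)$ and $v_{n+1}=Dv_n-b_n$, and because $M,A$ are contractions the bounded bi-infinite solution is given by the absolutely convergent sums
\[
v_n=\sum_{j\ge 0}M^{j+1}b_{n+j},\qquad u_n=\sum_{j\ge 0}A^{j}\bigl(Bv_{n-1-j}-a_{n-1-j}\bigr).
\]
Then $|\xi_n|\le K\de$ with $K=K(\la,\|B\|)$ and $y=x_0+\xi_0$ shadows, giving Lipschitz shadowing. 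Two features of this formula will be reused: it is shift-equivariant, so an $N$-periodic pseudo-orbit produces an $N$-periodic $\{\xi_n\}$ and hence a genuine periodic shadowing point $y$ (at $n=0$, $T^Ny=y$); and it only uses the rate $\la$.

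For items \ref{item pp} and \ref{item BN} I would first identify $B(T)$ up to closure with $V:=\{x:\sup_{n\in\ZZ}|T^nx|<\infty\}=\bigcup_NB_N(T)$, which is manifestly a linear subspace; since $\pi^-T^nx=D^nx^-$ and $\pi^+T^{-n}x=P^nx^+$, membership in $B(T)$ is governed by the behaviour of $D$ on $x^-$ and of $P$ on $x^+$, and approximating the slowly escaping components by truncations with fully bounded orbits yields $B(T)\subset\overline V$, hence $\overline{B(T)}=\overline{\bigcup_NB_N(T)}=\overline V$ and, in particular, $\overline{B(T)}$ is a subspace. Density of periodic points then comes from shadowing: around $x\in B(T)$ I produce a periodic $\de$-pseudo-orbit by running a true orbit segment whose two ends are close to $0$ (using that along the bounded subsequence the $E^+$-part has decayed while the $E^-$-part can be truncated inside some $B_N$), so that closing the loop costs only $O(\de)$; the periodic shadowing point of the previous paragraph then lies within $\eps$ of $x$. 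The transitive point of item \ref{item BN} (an $x\in B_{N'}$ whose orbit is dense in $B_N$) is built by the usual concatenation-and-shadowing of a pseudo-orbit that visits an $\eps$-net of $B_N$, and topological mixing (item \ref{item tp}) follows from this together with linearity and the uniform rates. Finally $\Omega(T)=\overline{B(T)}$: the inclusion $\overline{B(T)}\subset\Omega(T)$ is immediate once periodic points are dense in $B(T)$, since periodic points are non-wandering and $\Omega$ is closed, while the reverse inclusion uses that off $\overline V$ the triangular structure makes orbits escape uniformly on a neighbourhood, so such points are wandering.

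For item \ref{item ub} I would reduce, via $\pi^-T^ny=D^n\pi^-y$ and $\pi^+T^{-n}y=P^n\pi^+y$, to showing that the escaping sets $\{v\in E^-:|D^nv|\to\infty\}$ and $\{w\in E^+:|P^nw|\to\infty\}$ are dense; adding such components to the two coordinates of a given $z$ places a nearby point in $UB^+\cap UB^-$. Since $\|D^n\|\to\infty$ there is a $v_0$ with $|D^nv_0|\to\infty$, and for any $v$ with $\sup_n|D^nv|<\infty$ the vector $v_0+v$ escapes; as the bounded-orbit vectors are dense (here item \ref{item BN} feeds back in), the escaping set is dense, and symmetrically for $P$. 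The main obstacle I anticipate is precisely this upgrade from $\limsup=\infty$ (which is cheap, being a dense $G_\de$ by the uniform boundedness principle) to the genuine limit $|T^ny|\to\infty$ required by the definition of $UB^\pm$, together with the parallel point of making the periodic pseudo-orbits close up with small error; both are to be handled by exploiting the contractive inverse branches $M$ and $A$ rather than by soft category arguments.
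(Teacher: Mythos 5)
Your explicit solution of the cohomological equation $\xi_{n+1}=T\xi_n-e_n$ is correct and gives item 4 cleanly (it is essentially the paper's argument, phrased via the Pilyugin-type criterion rather than intersections of affine slices), and the observation that a periodic pseudo-orbit is shadowed by a genuinely periodic point is a good tool. But the recurrence items rest on a step that is false as stated. For $x\in B(T)$ the bounded iterates $T^{k_n}x$ and $T^{-m_n}x$ are only bounded by $K(x)$, not small, so the ``orbit segment whose two ends are close to $0$'' does not exist in general, and closing the loop (or concatenating two orbit segments for the transitive point of item 3) costs a jump of size $O(K(x))$ or $O(N)$, not $O(\delta)$. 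Plain Lipschitz shadowing then only yields a periodic point within $O(K(x))$ of $x$, which proves nothing. What rescues this is precisely the structure of your own formulas: since the only nonzero $e_n$ sits at the seam, the deviation $\xi_n$ decays like $\lambda^{|n-m|}$ away from the jump time $m$, so placing the seams at times $k_n$, $-m_{n'}$ with $k_n,m_{n'}$ large makes the shadowing point $\varepsilon$-close to $x$ despite the large jump. You never state or use this exponential localization, and without it items 1, 2 and 3 do not follow. (The paper instead proves a geometric lemma --- $T^k$ maps the slice $B_r(x)\cap(E^-+x)$ onto a set containing a slice of radius $\lambda^{-k}r$ --- and obtains transitivity from the forced intersection of two growing affine slices whose centers stay at bounded distance; your localized-shadowing route is a legitimate alternative, but only once that refinement is made explicit. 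Topological mixing also does not follow ``from linearity and the uniform rates'': the paper needs a separate argument using the increasing, eventually dense family $T^k(B^-_\varepsilon(0))$.)

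Item 5 has a second genuine gap. Your reduction needs (a) a single $v_0$ with $|D^nv_0|\to\infty$ as a true limit, and (b) density in $E^-$ of the vectors with bounded $D$-orbit. Neither is available: $\|D^n\|\ge\lambda^{-n}$ only gives $\limsup_n|D^nv|=\infty$ for a residual set, and the bounded-orbit vectors are dense only when $\overline{B(T)}=\BB$ (for a hyperbolic $T$ they reduce to $\{0\}$), so ``item 3 feeds back in'' does not deliver what you need. The missing mechanism, which is the actual content of the paper's proof, is spectral-plus-shadowing: since $T$ is not hyperbolic there is $\mu\in\sigma_a(T)\cap S^1$, hence a unit vector $x$ with $1-\varepsilon<|T^nx|<1+\varepsilon$ for all $n$ (shadow the pseudo-orbit $\mu^{|n|}x$); then the radially drifting pseudo-orbit $x_{n+1}=T(x_n)(1+\delta/|T(x_n)|)$ is shown to satisfy $|x_n|\to\infty$ in both time directions, and its shadowing point is a single vector whose full orbit genuinely tends to infinity. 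You correctly identify the $\limsup$-versus-$\lim$ upgrade as the main obstacle, but you do not supply the construction that overcomes it.
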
  
The proof is provided in section \ref{properties}. The fourth item of previous theorem is already proved in \cite{BCDMP}; here, we provided in section \ref{shadowing} a different proof based on the canonical one done for (non-linear) hyperbolic dynamics. Linear automorphisms with the unique shadowing are also classified (similar result was simultaneously proved in \cite{BM}).

In section \ref{sec.examples}, we provide examples of generalized hyperbolic operators such that $B(T)$ is dense in $\BB$. In view of the previous theorem, we call these transitive generalized hyperbolic operators. A subset of generalized hyperbolic operators disjoint from the hyperbolic ones is defined: 

\begin{definition} Let $T$ be a generalized  hyperbolic linear operator. $T$ is said to be a shifted hyperbolic operator if $T^{-1}(E^+) \cap E^-$ is a non-trivial subspace. 
\end{definition}

 In other words, $T$ is a  shifted hyperbolic operator if there exists a vector $v\in E^-$ such that it forward iterate is in $E^+$. Examples and constructions of shifted hyperbolic operators are presented in section \ref{sec.examples}. Generalized hyperbolic operators are comprised of shifted hyperbolic and hyperbolic operators only. 

 \begin{theorem} \label{thm decomposition}  Let $T$ be a generalized hyperbolic operator then:
 
 \begin{enumerate}
     \item Either it is hyperbolic or shifted hyperbolic;

     \item If there is a point in $E^-$ whose forward orbit has no component in $E^+$, then $T$ has a hyperbolic subspace. 

\end{enumerate}
     

\end{theorem}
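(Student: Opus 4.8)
The plan is to route both items through a single structural fact: the hypothesis $T^{-1}(E^-)\subseteq E^-$ forces $E^-\subseteq T(E^-)$. Indeed, applying $T$ to $T^{-1}(E^-)\subseteq E^-$ gives $E^-=T(T^{-1}(E^-))\subseteq T(E^-)$. Since $T(E^+)\subseteq E^+$ holds by hypothesis, I would first record the equivalence that $T$ is hyperbolic (the splitting is $T$-invariant) if and only if $T(E^-)=E^-$; because $E^-\subseteq T(E^-)$ always holds, failure of hyperbolicity means precisely $E^-\subsetneq T(E^-)$. I would also rephrase shifted hyperbolicity in the same language: by the reformulation given in the definition, $T$ is shifted hyperbolic iff there is $v\in E^-\setminus\{0\}$ with $T(v)\in E^+$, which is equivalent to $E^+\cap T(E^-)\neq\{0\}$ (apply $T$ to a nonzero $u\in T^{-1}(E^+)\cap E^-$, and conversely pull back a nonzero element of $E^+\cap T(E^-)$, using that $T$ is injective).

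For item 1, I would argue the contrapositive: assume $T$ is not shifted hyperbolic, i.e.\ $E^+\cap T(E^-)=\{0\}$, and deduce hyperbolicity. Let $P,Q$ be the bounded projections onto $E^-,E^+$ associated with $\BB=E^-\oplus E^+$. Fix $u\in E^-$ and write $T(u)=a+b$ with $a:=P\,T(u)\in E^-$ and $b:=Q\,T(u)\in E^+$. The crux is that $b\in T(E^-)$: one has $b=T(u)-a$, where $T(u)\in T(E^-)$ since $u\in E^-$, and $a\in E^-\subseteq T(E^-)$ by the inclusion above, so $b$ is a difference of two elements of the subspace $T(E^-)$. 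Hence $b\in E^+\cap T(E^-)=\{0\}$, so $T(u)=a\in E^-$. As $u\in E^-$ was arbitrary, $T(E^-)\subseteq E^-$, whence $T(E^-)=E^-$ and $T$ is hyperbolic. This proves item 1.

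For item 2, let $x\in E^-\setminus\{0\}$ have forward orbit with no $E^+$-component, i.e.\ $T^n(x)\in E^-$ for all $n\geq 0$. Because $T^{-1}(E^-)\subseteq E^-$, the backward iterates $T^{-n}(x)$ also lie in $E^-$, so the entire orbit $\{T^n(x):n\in\ZZ\}$ is contained in $E^-$. I would then set $W:=\overline{\mathrm{span}}\{T^n(x):n\in\ZZ\}$, a closed subspace contained in $E^-$. The linear span of the orbit is exactly $T$-invariant, so continuity of $T$ and $T^{-1}$ gives $T(W)\subseteq W$ and $T^{-1}(W)\subseteq W$, hence $T(W)=W$ and $T|_W$ is an automorphism of $W$. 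Finally, since $W\subseteq E^-$ and $T^{-1}|_{E^-}$ is a uniform contraction, $T^{-1}|_W$ is a uniform contraction; taking the splitting $W=W\oplus\{0\}$ (all expanding) exhibits $T|_W$ as a hyperbolic operator, and $W\neq\{0\}$ because $x\neq 0$. Equivalently one may take the maximal such subspace $H:=\{x\in E^-:T^n(x)\in E^-\ \forall n\geq 0\}=\bigcap_{n\geq 0}T^{-n}(E^-)\cap E^-$, which is closed, $T$-invariant and purely expanding.

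The conceptual heart of the argument is the asymmetry built into the definition: $T$ contracts $E^+$ while $T^{-1}$ contracts $E^-$, yielding $E^-\subseteq T(E^-)$ but not the reverse, and this one inclusion drives both items. I expect no serious obstacle; the only points needing care are identifying $T(E^-)$ as the correct object to test against (so that the $E^+$-component of $T(u)$ is trapped inside $E^+\cap T(E^-)$), and, in item 2, verifying that the orbit-closure is genuinely $T$-invariant with $T(W)=W$ and that a uniform expansion counts as hyperbolic under the paper's convention permitting a trivial contracting factor.
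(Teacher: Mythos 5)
Your proposal is correct and follows essentially the same route as the paper: item 1 rests on the identity $\Pi^+Tu = Tu-\Pi^-Tu$ together with $E^-\subseteq T(E^-)$ (the paper phrases this directly by exhibiting $x-T^{-1}\Pi^-Tx$ as a transition vector, you phrase it contrapositively), and item 2 takes the closed $T$-invariant subspace generated by the orbit inside $E^-$ and observes that $T^{-1}$ contracts it uniformly, hence $T$ acts there as a uniform expansion. No gaps.
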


Shifted hyperbolic operators  could still exhibit hyperbolic invariant subspace; see example 7. Moreover, in that example, there is a hyperbolic component in the closure of $B(T)$ (which in the example coincides with $\BB$).  In other words, second item of previous theorem could hold for some shifted hyperbolic operators.

\begin{definition}\label{d.transition} Let $T$ be a shifted hyperbolic operator; the subspace $E_0=T^{-1}(E^+)\cap E^-$ is called the transition subspace.  We denote by $\Sigma$, the shifted subspace, the smallest closed $T$-invariant subspace containing $E_0$.

\end{definition}

Note that the smallest closed $T$-invariant subspace containing $A\subset \BB$ is the closure of polynomials, in $T$ and $T^{-1}$, applied to points in $A$. For a hyperbolic operator, $B(T) = 0$, in contrast to the shifted case. 
 
 \begin{theorem}\label{thm SH} If $T$ is a shifted hyperbolic operator then $\overline{B(T)}$ is an   infinite dimension subspace that coincides with the shifted subspace. 
 
\end{theorem}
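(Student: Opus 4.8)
The plan is to prove the two inclusions $\Sigma \subseteq \overline{B(T)}$ and $\overline{B(T)} \subseteq \Sigma$ separately, and then to establish infinite-dimensionality. For the first inclusion, the key observation is that every $v \in E_0$ is homoclinic to the origin: since $v \in E^-$ and $T^{-1}(E^-)\subseteq E^-$ with $T^{-1}|_{E^-}$ a uniform contraction, we have $T^{-n}v \to 0$; and since $Tv \in E^+$ with $T|_{E^+}$ a uniform contraction, $T^{n}v \to 0$. Hence $E_0 \subseteq B(T)$. By Theorem \ref{bounded-transitive}, $\overline{B(T)} = \Omega(T)$ is a closed subspace invariant under $T$ and $T^{-1}$; being the smallest such subspace containing $E_0$, the shifted subspace satisfies $\Sigma \subseteq \overline{B(T)}$.

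For the reverse inclusion, since $\Sigma$ is closed and, by Theorem \ref{bounded-transitive}(\ref{item BN}), $\overline{B(T)} = \overline{\bigcup_N B_N(T)}$, it suffices to show $B_N(T) \subseteq \Sigma$ for every $N$; that is, every point with globally bounded orbit lies in $\Sigma$. Write $P^\pm$ for the projections onto $E^\pm$ and set $a := P^- T|_{E^-}$, $d := T|_{E^+}$, $s := T^{-1}|_{E^-}$, $h := P^+ T^{-1}|_{E^+}$, so that $\|d\|, \|s\| \le \lambda < 1$. A direct computation gives the identities $as = \mathrm{id}_{E^-}$ and $hd = \mathrm{id}_{E^+}$, together with the crucial relations $\ker a = E_0$ and $\ker h = T(E_0)$ (indeed $ay = 0$ for $y \in E^-$ means $Ty \in E^+$, i.e. $y \in E_0$). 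Consequently, for any $y \in E^-$ one has $y - s(ay) \in E_0$. Given $x \in B_N(T)$ and writing $x = x^- + x^+$, I apply this telescopically to $x^-$:
\[
x^- = \sum_{i=0}^{k-1} s^i\big(a^i x^- - s\,a^{i+1}x^-\big) + s^k a^k x^-,
\]
where each summand equals $T^{-i}$ applied to an element of $E_0$, hence lies in $\Sigma$. The main point is that the remainder vanishes: $a^k x^- = P^- T^k x$ is bounded by $\|P^-\|N$ because $x$ has globally bounded orbit, while $\|s^k\| \le \lambda^k \to 0$, so $s^k a^k x^- \to 0$. Since $\Sigma$ is closed, $x^- \in \Sigma$.

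The argument for $x^+$ is symmetric, using $d$ and $h$ in place of $s$ and $a$: from $z - d(hz) \in T(E_0)$ for $z \in E^+$ one obtains an analogous telescoping expansion of $x^+$ whose summands are forward iterates of $T(E_0) \subseteq \Sigma$ and whose remainder $d^k h^k x^+$ tends to $0$, now because $h^k x^+ = P^+ T^{-k}x$ is bounded by $\|P^+\|N$ and $\|d^k\| \to 0$. Hence $x^+ \in \Sigma$ and therefore $x = x^- + x^+ \in \Sigma$, completing $\overline{B(T)} \subseteq \Sigma$. I expect the main obstacle to be precisely the bookkeeping that isolates the homoclinic ``generator'' $E_0 = \ker a$ inside the expanding block and shows that global boundedness of the orbit forces the remainder terms to decay; the contraction estimates then do the rest.

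Finally, for infinite-dimensionality, fix $0 \neq v \in E_0$. If $\Sigma$ were finite-dimensional, then $T|_\Sigma$ would be a finite-dimensional automorphism admitting the nonzero orbit $\{T^n v\}_{n\in\ZZ}$ with $T^n v \to 0$ both as $n \to +\infty$ (the orbit lies in $E^+$) and as $n \to -\infty$ (it lies in $E^-$); a Jordan-form analysis shows that no nonzero vector can have its two-sided orbit converge to $0$ in both directions, a contradiction. Thus $\Sigma = \overline{B(T)}$ is an infinite-dimensional subspace.
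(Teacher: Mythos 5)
Your proof is correct, and for the substantive half of the statement it takes a genuinely different route from the paper. The inclusion $\Sigma \subseteq \overline{B(T)}$ is handled the same way in both: transition vectors are homoclinic to $0$, hence lie in $B(T)$, and $\overline{B(T)}$ is a closed invariant subspace by theorem \ref{bounded-transitive}. For the reverse inclusion the paper is extremely terse --- it says the coincidence ``follows immediately'' from the orbit of the transition vector lying in $B(T)$, which really only yields $\Sigma \subseteq \overline{B(T)}$; the containment $\overline{B(T)} \subseteq \Sigma$ is left implicit and would have to be assembled from the later propositions (density of periodic points in $B(T)$ together with the representation of every $k$-periodic point as $\sum_n T^{nk}v$ with $v \in T^k(E^-)\cap E^+$). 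Your telescoping identity $x^- = \sum_{i<k} s^i(a^i x^- - s\,a^{i+1}x^-) + s^k a^k x^-$, with $\ker a = E_0$ and the remainder killed by $\|s^k\|\le \lambda^k$ against the uniform bound $|T^n x|<N$, gives a direct, self-contained proof that $B_N(T)\subseteq\Sigma$, which via item \ref{item BN} of theorem \ref{bounded-transitive} closes the argument without any appeal to periodic points; this is cleaner and arguably fills a gap in the paper's write-up. For infinite-dimensionality the two arguments also differ: the paper shows the orbit $\{T^n v\}$ of a transition vector is linearly independent by projecting a putative dependence relation onto $E^\pm$ after applying $T^{-1}$, whereas you argue by contradiction through the Jordan form of a finite-dimensional automorphism admitting a nonzero two-sided homoclinic orbit. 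Both are valid; the paper's version has the small bonus of exhibiting an explicit infinite linearly independent family inside $\overline{B(T)}$, which is reused in proposition \ref{prop per points}.
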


Shifted hyperbolic operators have periodic points of every period (theorem \ref{bounded-transitive} implies that they are dense in the shifted subspace), which can be described in terms of the transition vectors. This is done in proposition \ref{prop per points}.
 
Not all shifted hyperbolic operators are transitive. A direct sum of a hyperbolic map and a transitive generalized hyperbolic operator is clearly not. It may not be the case that a closed subspace where $T$ acts hyperbolically is not complemented, which leads us to the following definition.


\begin{definition}

Given $T \in L(\BB)$, it is said that $T$ has a hyperbolic component if there exists a closed invariant  subspace $F$ such that the induced map on $\BB/F$ is hyperbolic.
\end{definition}

Observe that having a hyperbolic component does not imply that there is a hyperbolic subspace as it is shown in example 8 in section \ref{sec.examples}. We have the following dichotomy:

\begin{theorem}\label{thm dichotomy} Let $T$ be a generalized hyperbolic operator. Either $\overline{B(T)} = \BB$, i.e. $T$ is transitive on $\BB$, or $T$ has a hyperbolic component. More precisely, if $\BB \neq \overline{B(T)}$, then the induced map on $\BB/{\overline{B(T)}}$ is hyperbolic. 

\end{theorem}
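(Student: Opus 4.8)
The plan is to work with the closed subspace $W := \overline{B(T)}$, which is a subspace by item \ref{item pp} of Theorem \ref{bounded-transitive}. Since $B(T)$ is invariant under both $T$ and $T^{-1}$, so is $W$, and hence $T$ induces an automorphism $\bar T$ on the quotient $Q := \BB/W$; write $\pi : \BB \to Q$ for the projection. If $W = \BB$ then $B(T)$ is dense and $T$ is transitive by item \ref{item tp} of Theorem \ref{bounded-transitive}, which is the first alternative. So assume $W \neq \BB$; the goal becomes showing that $\bar T$ is hyperbolic, which is exactly the assertion that the induced map on $\BB/\overline{B(T)}$ is hyperbolic. My strategy is to prove that $\bar T$ is itself generalized hyperbolic but has trivial bounded set, and then to invoke Theorems \ref{thm decomposition} and \ref{thm SH} to upgrade ``generalized hyperbolic with $B(\bar T)=0$'' to ``hyperbolic''.

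The crux is to show that $W$ is \emph{compatible} with the splitting, i.e. that the bounded projections $P^{\pm}$ onto $E^{\pm}$ along the complementary factor satisfy $P^{\pm}(W)\subseteq W$, equivalently $W = (W\cap E^-)\oplus(W\cap E^+)$. For this I would take $x\in B_N(T)$, write $x=x^-+x^+$ with $x^\pm\in E^\pm$, and show $x^+=P^+x\in B(T)$: the forward orbit $T^n x^+\to 0$ because $T|_{E^+}$ is a uniform contraction, while $T^{-n}x^+ = T^{-n}x - T^{-n}x^-$ stays bounded because $|T^{-n}x|<N$ and $T^{-n}x^-\to 0$ (as $T^{-1}|_{E^-}$ is a uniform contraction). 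Thus $x^+\in B(T)\subseteq W$ and $x^-=x-x^+\in W$. Since $W=\overline{\cup_N B_N(T)}$ by item \ref{item BN} of Theorem \ref{bounded-transitive} and $P^\pm$ are continuous, this gives $P^\pm(W)\subseteq W$, hence the claimed splitting of $W$.

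With compatibility in hand I would set $\bar E^{\pm} := \pi(E^{\pm})$ and check these give a generalized hyperbolic splitting of $Q$. Closedness follows because $E^+ + W = E^+\oplus (W\cap E^-)$, a sum of closed subspaces sitting in the two transverse, complemented factors $E^+$ and $E^-$, hence closed; so $\pi(E^+)$ is closed, and symmetrically for $\pi(E^-)$. They span $Q$, and the sum is direct: if $\pi(x^+)=\pi(x^-)$ then $x^+-x^-\in W$, and applying $P^+$ (which preserves $W$) yields $x^+\in W$, so $\pi(x^+)=0$. Finally the induced maps contract: for $x^+\in E^+$, invariance $T^nW=W$ together with $W\cap E^+\subseteq W$ gives $\|\bar T^n\pi(x^+)\| = \inf_{w\in W}\|T^n(x^+-w)\| \le \la^n\,\mathrm{dist}(x^+,W\cap E^+)\le \la^n\|P^+\|\,\|\pi(x^+)\|$, a uniform contraction; symmetrically $\bar T^{-1}|_{\bar E^-}$ contracts. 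Thus $\bar T$ is generalized hyperbolic.

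It then remains to compute $B(\bar T)$. Writing $\bar x=\pi(x)=\pi(x^-)+\pi(x^+)$, the contraction estimates give $\|\bar T^n\pi(x^-)\|\to\infty$ unless $\pi(x^-)=0$ and $\|\bar T^{-n}\pi(x^+)\|\to\infty$ unless $\pi(x^+)=0$; since $Q=\bar E^-\oplus\bar E^+$ is a topological direct sum, a bounded forward subsequence forces $\pi(x^-)=0$ and a bounded backward subsequence forces $\pi(x^+)=0$, so $\bar x\in B(\bar T)$ implies $\bar x=0$. Hence $B(\bar T)=\{0\}$; by Theorem \ref{thm SH} a shifted hyperbolic operator has infinite-dimensional bounded set, so $\bar T$ is not shifted hyperbolic, and Theorem \ref{thm decomposition}(1) forces $\bar T$ to be hyperbolic. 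This is precisely the statement that $T$ has a hyperbolic component, completing the dichotomy. I expect the main obstacle to be the compatibility step together with the attendant functional-analytic points — closedness of $E^+\oplus(W\cap E^-)$ and the passage from the norm on $\BB$ to the quotient norm — rather than the final invocation of the earlier theorems.
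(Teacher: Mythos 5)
Your setup is sound and runs parallel to the first half of the paper's argument: that $W=\overline{B(T)}$ is closed, invariant, and compatible with the splitting in the sense that $\Pi^{\pm}(W)\subseteq W$, so that $Q=\BB/W$ splits topologically as $\pi(E^-)\oplus\pi(E^+)$ with the semi-invariance and the contraction estimates surviving to the quotient, is exactly the paper's first claim in this section, proved the same way. The gap is in your last step. From ``$\bar T$ is generalized hyperbolic on $Q$'' you cannot deduce that $\|\bar T^{n}\pi(x^-)\|\to\infty$ for $\pi(x^-)\neq 0$ in $\bar E^-$, nor that $\|\bar T^{-n}\pi(x^+)\|\to\infty$ for $\pi(x^+)\neq 0$: the contraction of $\bar T^{-1}$ on $\bar E^-$ only controls the backward orbit of $\pi(x^-)$, and to convert it into forward expansion you would need $\bar T^{n}\pi(x^-)$ to remain in $\bar E^-$, i.e.\ forward invariance of $\bar E^-$, which generalized hyperbolicity does not provide. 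Indeed the assertion is false for a general generalized hyperbolic operator: any transition vector $v\in E^-\cap T^{-1}(E^+)$ has forward orbit tending to $0$, and $\sum_{k}T^{k}v$ is a nonzero bounded orbit with nonzero $E^-$-component. So ``bounded orbit $\Rightarrow$ both components vanish'' is \emph{equivalent} to the absence of transition vectors for $\bar T$, which is precisely what remains to be shown; as written, the computation of $B(\bar T)=\{0\}$ begs the question.

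The missing ingredient is the paper's second claim: for $x\in E^-$ one has $\Pi^+Tx=T(x-T^{-1}\Pi^-Tx)\in T(E^-)\cap E^+$, which lies in $B(T)\subseteq W$; hence $\pi(Tx)=\pi(\Pi^-Tx)\in\bar E^-$, so $\bar E^-$ is forward invariant in the quotient, and symmetrically $\bar E^+$ is backward invariant. Once the splitting is fully invariant, $\bar T$ is hyperbolic by definition, and your detour through $B(\bar T)=\{0\}$ and Theorems \ref{thm decomposition} and \ref{thm SH} becomes unnecessary. (The paper itself concludes differently, passing the shadowing property to the quotient and invoking the equivalence of unique shadowing with hyperbolicity, but the invariance of the quotient splitting is still its key structural step.) With that one observation inserted, your argument closes.
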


From   theorems \ref{thm SH} and \ref{thm decomposition}, it follows that if there is no hyperbolic components then  the shifted subspace coincide with the whole space. A classical type of examples of transitive shifted hyperbolic operators are the classical weighted shifts acting on $l^2(\ZZ)$ (see examples 1, 2 and 3 in section \ref{sec.examples}). In that case, any vector in $\BB$ can be written as an infinite sum of the iterates of the transition vector.  


With these examples in mind, we introduce the next definition:

\begin{definition}
Let $T$ be an operator acting on  a Banach space.
It is said that $T$ is strong shifted hyperbolic if 

\begin{enumerate}
    \item[--]$T$ is a transitive generalized hyperbolic operator,
    
    \item[--] There is a set of infinite countable independent subspace $(\hat E_k)$ such  that every $x \in \BB$ can be written as a convergent series $x = \sum_{k \in \ZZ} x_k$, where $x_k \in \hat E_k$ and each subspace $\hat E_k$ is the direct sum of finite iterates of the transition subspace.
    
\end{enumerate}

    
    



\end{definition}

In other words, there is an infinite  partition of $\mathbb{Z} = \bigsqcup_k P_k$ into finite subsets so that every $\hat E_k$ is generated by $(T^i(E_0))_{i\in P_k}.$



    
    



The next theorem shows that the absence of hyperbolic components and hyperbolic subspace characterized the strong shifted hyperbolic operators.
 \begin{theorem}
 \label{thm StSH} If $T$ is   generalized hyperbolic operator on a Hilbert space, and it does not contain either a hyperbolic  component or a hyperbolic invariant subspace,  then  $T$ is a strong shifted hyperbolic operator.

\end{theorem}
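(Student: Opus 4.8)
The plan is to show that a generalized hyperbolic operator $T$ on a Hilbert space with no hyperbolic component and no hyperbolic invariant subspace satisfies the two defining conditions of a strong shifted hyperbolic operator. The first condition, transitivity, follows almost immediately from the machinery already developed: by Theorem \ref{thm dichotomy}, the absence of a hyperbolic component forces $\overline{B(T)} = \BB$, so $T$ is transitive on $\BB$; combined with Theorem \ref{thm decomposition}, the absence of a hyperbolic subspace together with $T$ being generalized hyperbolic forces $T$ to be shifted hyperbolic, and then Theorem \ref{thm SH} identifies $\overline{B(T)} = \BB$ with the shifted subspace $\Sigma$. Thus $\Sigma = \BB$, meaning the iterates $(T^i(E_0))_{i \in \ZZ}$ of the transition subspace $E_0$ densely span the whole space. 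The real content is therefore the second condition: manufacturing the countable independent family $(\hat E_k)$ with the series representation.

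The key step is to convert the \emph{topological} statement ``$\Sigma$ is the smallest closed invariant subspace containing $E_0$, and $\Sigma = \BB$'' into the \emph{quantitative} statement that every $x \in \BB$ admits a genuinely convergent series decomposition along iterates of $E_0$. Here I would exploit the Hilbert space structure decisively, since it is the one hypothesis that distinguishes this theorem from the Banach space results. First I would examine how the iterates $T^i(E_0)$ sit relative to the splitting $\BB = E^- \oplus E^+$: by definition of shifted hyperbolicity, $E_0 \subset E^-$ while $T(E_0) \subset E^+$. The invariance properties (item 1 of the definition of generalized hyperbolic, $T(E^+) \subset E^+$ and $T^{-1}(E^-) \subset E^-$) mean that positive iterates $T^i(E_0)$ for $i \geq 1$ live inside $E^+$ and are contracted toward $0$, while negative iterates $T^{-i}(E_0)$ for $i \geq 0$ live inside $E^-$ and are contracted toward $0$ under backward iteration. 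This is what makes the series converge: the uniform contraction constant $\lambda \in (0,1)$ on each side gives geometric decay of the tails. I would use this to show that the closed span of the $T^i(E_0)$ decomposes into mutually ``nearly orthogonal'' pieces, and then invoke orthogonality in the Hilbert space to control cross terms.

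The mechanism for producing the partition $\ZZ = \bigsqcup_k P_k$ into finite blocks, with $\hat E_k$ generated by $(T^i(E_0))_{i \in P_k}$, is where the main work lies. The naive guess $P_k = \{k\}$ need not give \emph{independent} subspaces, because distinct iterates of $E_0$ can overlap or fail to be topologically complemented; this is precisely the subtlety flagged in the discussion preceding the hyperbolic-component definition (a hyperbolically-acting subspace need not be complemented). The strategy would be to group consecutive iterates into finite blocks large enough that the resulting $\hat E_k$ become independent and the projections onto them are uniformly bounded. Concretely, I would use the Hilbert adjoint and the spectral separation between $E^+$ and $E^-$ (each with spectral radius strictly less than $1$ under the respective forward/backward dynamics) to build uniformly bounded projections onto finite clusters of iterates, choosing block sizes so that the geometric gain from the contraction dominates the growth of the overlap. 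The convergence of $x = \sum_k x_k$ with $x_k \in \hat E_k$ then follows from the uniform boundedness of these projections plus completeness of the family, the latter being exactly the density $\overline{\Sigma} = \BB$ established in the first paragraph.

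The hardest part will be the construction of the independent family $(\hat E_k)$ with the required block structure and the verification that the series genuinely converges rather than merely that its partial sums are dense. In the Banach setting this can fail, and the proof must make essential use of the Hilbert space geometry to obtain the uniformly bounded projections; controlling the angles between the infinitely many blocks $\hat E_k$ and showing they do not degenerate is the crux. I expect the contraction estimates on $E^+$ and $E^-$ to be the tool that tames these angles, since they force the iterates $T^i(E_0)$ to spread out across ``scales'' in a way that prevents small angles between well-separated blocks, but making this precise — quantifying the minimal angle as a function of the block separation and the spectral gap — is the technical heart of the argument.
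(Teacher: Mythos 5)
Your first paragraph is fine and matches the paper's (implicit) setup: the dichotomy theorem gives $\overline{B(T)}=\BB$, hence $T$ is shifted hyperbolic and $\Sigma=\BB$, so the iterates $(T^i(E_0))_{i\in\ZZ}$ densely span the space. The gap is in the technical heart, and it is a real one: you propose to obtain the uniform lower bound on the angles between the blocks $\hat E_k$ from the contraction estimates and the spectral gap, ``quantifying the minimal angle as a function of the block separation.'' That mechanism cannot work, and the paper's own Example 7 is a counterexample to the heuristic: there one has a transitive shifted hyperbolic operator with exactly the same uniform contraction constants, yet the normalized backward iterates of the transition vector are $v_n=e_{-1}+\frac1n e_n$, which converge to $e_{-1}$ --- a hyperbolic invariant line. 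So the subspaces $T^{-n}(E_0)$ can perfectly well accumulate on one another despite the spectral gap; no grouping into consecutive finite blocks restores a uniform angle. The contraction forces the \emph{norms} along the orbit of a transition vector to decay geometrically (which is why the series, once set up, converges absolutely term by term), but it says nothing about the mutual positions of the subspaces $T^{-n}(E_0)$ inside $E^-$.

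What actually prevents the degeneration is the hypothesis you never invoke at this stage: the absence of a hyperbolic \emph{invariant subspace}. The paper's argument is soft rather than quantitative. Let $\Pi_n$ be the orthogonal projection onto $E_n=T^{-n}(E_0)$. If $(\Pi_n)$ had a Cauchy subsequence with limit $\Pi$, then $F=\ker(I-\Pi)$ and all its forward iterates would lie in the closed subspace $E^-$, so the closed invariant subspace generated by $F$ would sit inside $E^-$ and $T$ would act hyperbolically on it --- contradicting the hypothesis. A general metric-space lemma then converts ``no Cauchy subsequence'' into a partition of $\{\Pi_n\}$ into \emph{finite} clusters that are pairwise $c$-separated (these clusters need not consist of consecutive indices, by the way). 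The separation $\|\hat\Pi_k-\hat\Pi_j\|>c$ yields a uniform bound $|\langle\hat\Pi_k v,\hat\Pi_j v\rangle|<\beta<1$, and a Cauchy--Schwarz computation gives the Bessel-type inequality $\sum_k|v_k|^2\le(1-\beta)^{-1}|v|^2$, which is what upgrades density of the span to genuine convergence of the series $x=\sum_k x_k$. So you correctly identified \emph{where} the difficulty lies (uniformly bounded projections onto the blocks and convergence of the series rather than mere density), but the tool you reach for is the wrong one; the proof hinges on turning the no-hyperbolic-subspace hypothesis into the non-accumulation of the projections, not on the contraction rates.
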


\begin{corollary}
If $T$ is  transitive generalized hyperbolic operator in a Hilbert space  without any invariant hyperbolic subspace, then $T$ is strongly shifted hyperbolic. 
\end{corollary}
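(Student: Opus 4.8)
The plan is to derive the corollary directly from Theorem \ref{thm StSH}, whose hypotheses require that $T$ be a generalized hyperbolic operator on a Hilbert space possessing neither a hyperbolic component nor a hyperbolic invariant subspace. Two of these are already in hand: $T$ is generalized hyperbolic on a Hilbert space, and the absence of a hyperbolic invariant subspace is part of the corollary's assumption. Hence the only point to establish is that a transitive $T$ cannot admit a hyperbolic component, after which Theorem \ref{thm StSH} applies verbatim.

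First I would record that transitivity is, by the formulation of Theorem \ref{thm dichotomy}, precisely the statement $\overline{B(T)} = \BB$. Suppose toward a contradiction that $T$ has a hyperbolic component: a closed invariant subspace $F \neq \BB$ such that the induced operator $\bar T$ on $\BB/F$ is hyperbolic. Let $\pi : \BB \to \BB/F$ be the canonical projection; it is bounded, satisfies $\|\pi\| \leq 1$, and intertwines the dynamics, $\pi \circ T = \bar T \circ \pi$. The key step is to push the bounded set forward under $\pi$: if $x \in B(T)$ with witnessing constant $K$ and sequences $\{k_n\}, \{m_n\}$, then $|\bar T^{k_n}(\pi(x))| = |\pi(T^{k_n}(x))| \leq \|\pi\|\,|T^{k_n}(x)| < K$, and symmetrically for the backward iterates, so $\pi(x) \in B(\bar T)$. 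Thus $\pi(B(T)) \subseteq B(\bar T)$. Since $\bar T$ is hyperbolic, its bounded set reduces to the origin, $B(\bar T) = \{0\}$, whence $B(T) \subseteq \ker \pi = F$. Taking closures and using that $F$ is closed yields $\overline{B(T)} \subseteq F \subsetneq \BB$, contradicting $\overline{B(T)} = \BB$. Therefore $T$ has no hyperbolic component, and the conclusion follows from Theorem \ref{thm StSH}.

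The one place requiring care is the degenerate boundary case $F = \BB$. The argument above rests on reading the definition of hyperbolic component as demanding that the quotient $\BB/F$ be non-trivial; otherwise the induced map on the zero space would be vacuously hyperbolic and every operator would trivially carry a hyperbolic component, rendering the notion empty. This convention is exactly the one implicit in the ``more precisely'' clause of Theorem \ref{thm dichotomy}, where the hyperbolic quotient $\BB/\overline{B(T)}$ is non-trivial precisely when $\overline{B(T)} \neq \BB$. With that reading fixed, the displayed estimate and the inclusion $\pi(B(T)) \subseteq B(\bar T)$ are the substantive content, and the remaining implications are immediate.
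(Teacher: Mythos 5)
Your proposal is correct and takes essentially the paper's (implicit) route: the corollary is meant to follow from Theorem \ref{thm StSH} once transitivity is shown to exclude hyperbolic components, which the paper establishes in the opening remark of its Dichotomy section by projecting non-trivial periodic points to the quotient, while you project bounded orbits --- the same mechanism, since a hyperbolic quotient admits neither. Your care about excluding the degenerate case $F=\BB$ from the definition of hyperbolic component is a sensible reading and consistent with how the paper uses the notion in Theorem \ref{thm dichotomy}.
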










One of the main property of a generalized hyperbolic is that it is an open property.

\begin{theorem}
\label{robustness} 

If $T$ is a generalized hyperbolic operator, there is $\eps>0$ such that for any $S$ such that $|T-S|<\eps$ then $S$ is generalized hyperbolic.

 Moreover, if $T$ is  shifted hyperbolic then $S$ is also  shifted hyperbolic.  In particular, if $T$ is shifted hyperbolic then for any operator nearby  holds that its bounded set is infinite dimensional.

\end{theorem}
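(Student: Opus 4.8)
The plan is to show that the defining splitting of $T$ persists: given $S$ with $|T-S|<\eps$, I will construct a new splitting $\BB=\tilde E^-\oplus\tilde E^+$ adapted to $S$. Assume, as permitted, that $\BB$ has been renormed so that $|Tx|\le\la|x|$ for $x\in E^+$ and $|T^{-1}x|\le\la|x|$ for $x\in E^-$, with $\la\in(0,1)$, and write $\pi^\pm$ for the projections onto $E^\pm$ associated with $\BB=E^-\oplus E^+$. I look for $\tilde E^+$ and $\tilde E^-$ as graphs $\{w+\psi w:w\in E^+\}$ and $\{\eta+\chi\eta:\eta\in E^-\}$ of bounded linear maps $\psi\colon E^+\to E^-$ and $\chi\colon E^-\to E^+$ of small norm. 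A direct computation shows that once $|\psi|,|\chi|<1$ the operator $I-\psi\chi$ is invertible, so these two graphs are automatically closed complementary subspaces with $\BB=\tilde E^-\oplus\tilde E^+$; thus everything reduces to finding such $\psi,\chi$ making $\tilde E^+$ forward $S$-invariant and $\tilde E^-$ forward $S^{-1}$-invariant, with $S$ contracting on $\tilde E^+$ and $S^{-1}$ contracting on $\tilde E^-$.

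I will solve for $\psi$ by a variation-of-constants (Perron) scheme. Since $T(E^+)\subset E^+$, the $E^+\to E^-$ block of $T$ vanishes, so $S=\begin{pmatrix}A+P_{11}&P_{12}\\ C+P_{21}&D+P_{22}\end{pmatrix}$ along $E^-\oplus E^+$, where $A=\pi^-T|_{E^-}$, $C=\pi^+T|_{E^-}$, $D=T|_{E^+}$ and $|P_{ij}|\le\eps$. Invariance $S(\mathrm{graph}\,\psi)\subset\mathrm{graph}\,\psi$ is the equation $(A+P_{11})\psi+P_{12}=\psi\big[(C+P_{21})\psi+(D+P_{22})\big]$. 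Along the graph the $E^+$-coordinate obeys $w_{n+1}=\hat D_\psi w_n$ with $\hat D_\psi=(D+P_{22})+(C+P_{21})\psi$ of norm $\le\la+O(\eps)+O(|\psi|)<1$, while the $E^-$-coordinate must be slaved so as to stay bounded forward; writing this slaved value as $\psi w=-\sum_{j\ge0}\tilde V^{\,j+1}P_{12}w_j$ determines $\psi$ as a fixed point. Here $\tilde V$ is a bounded right inverse of $A+P_{11}$: since $V:=T^{-1}|_{E^-}$ satisfies $AV=I_{E^-}$ and $|V|\le\la$, the operator $\tilde V:=V(I+P_{11}V)^{-1}$ is a right inverse with $|\tilde V|\le\la/(1-\eps\la)<1$, making every series convergent. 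The resulting graph transform is then a contraction of constant $O(\la)$ on a small ball of $L(E^+,E^-)$, yielding a unique small $\psi$; applying the same argument to $T^{-1}$ (generalized hyperbolic with the roles of $E^+,E^-$ exchanged, using the contraction $\tilde W$ built from $T|_{E^+}$) produces $\chi$. Because on $\tilde E^+$ the $E^+$-coordinate contracts at rate $\le\la+O(\eps)$ and $\psi$ is bounded, $S$ is a uniform contraction on $\tilde E^+$ and symmetrically $S^{-1}$ on $\tilde E^-$, so $S$ is generalized hyperbolic.

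The main obstacle, and the point where the argument departs from the classical proof for hyperbolic operators, is that the splitting is \emph{not} $T$-invariant: in the shifted case the transition subspace $E_0=T^{-1}(E^+)\cap E^-$ is nonzero, so the diagonal block $A=\pi^-T|_{E^-}$ is not invertible (its kernel is exactly $E_0$), and the associated Sylvester operator $\psi\mapsto A\psi-\psi D$ therefore has nontrivial kernel and need not be surjective; the same degeneration affects the corresponding block of $T^{-1}$. Hence the usual spectral-gap/Sylvester-equation method for persistence of invariant subspaces is unavailable. This is precisely why the construction is phrased through the one-sided contraction inverses $\tilde V$ and $\tilde W$ together with forward/backward summation of the off-diagonal leakage: these use only the genuine contractions $T^{-1}|_{E^-}$ and $T|_{E^+}$ and so are insensitive to the non-injectivity and non-surjectivity of the diagonal blocks. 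Verifying the contraction estimates and the uniform convergence of the Perron series over the relevant ball in $\psi,\chi$ will be the bulk of the technical work.

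For the second assertion, suppose $T$ is shifted hyperbolic and fix $0\ne v\in E_0$. Then $Tv\in E^+$ and $v\in E^-$, so $|T^nv|\le\la^{\,n-1}|Tv|\to0$ and $|T^{-n}v|\le\la^{\,n}|v|\to0$; in particular the full orbit $\{T^nv\}_{n\in\ZZ}$ is bounded, say by $M$. Since $|(S-T)T^nv|\le\eps|T^nv|\le\eps M$, the sequence $\{T^nv\}$ is an $\eps M$-pseudo-orbit for $S$. Shrinking $\eps$ if necessary and using that $S$, being generalized hyperbolic, has the Lipschitz shadowing property by item \ref{item sh} of Theorem \ref{bounded-transitive}, there is a genuine $S$-orbit through some point $y$ with $|S^ny-T^nv|<\varepsilon$ for all $n$, where $\varepsilon<|v|$. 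Then $\{S^ny\}$ is bounded and $|y|\ge|v|-\varepsilon>0$, so $0\ne y\in B(S)$. As a hyperbolic operator has $B=\{0\}$, $S$ cannot be hyperbolic, so Theorem \ref{thm decomposition} forces $S$ to be shifted hyperbolic, and Theorem \ref{thm SH} then yields that $\overline{B(S)}$ is an infinite-dimensional subspace, which is the final claim.
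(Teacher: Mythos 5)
Your argument is correct, and for the first (and main) assertion it follows a genuinely different route from the paper. The paper runs a Hadamard-style graph transform: starting from $M^0=E^-$ it iterates $M^j=S(M^{j-1}_{-1})$, checks that each $M^j$ stays in the cone $C_\alpha(E^-)$ and projects onto $E^-$, and then shows the associated projectors contract, $||\Pi^n-\Pi^{n-1}||<\la\, ||\Pi^{n-1}-\Pi^{n-2}||$, so that $E^-_S$ is obtained as $Ker(I-\Pi)$ for the limit projector $\Pi$ (the other half of the splitting being handled by the symmetric construction for $S^{-1}$). You instead solve the invariance equation directly by a Lyapunov--Perron fixed point for the graphing maps $\psi,\chi$, and your key observation --- that the diagonal block $\pi^-T|_{E^-}$ is only surjective, with kernel exactly the transition subspace $E_0$, so the classical Sylvester-equation argument for persistence of invariant splittings is unavailable and must be replaced by the one-sided contraction inverses $\tilde V,\tilde W$ --- is precisely the structural point that the paper's cone construction is also designed to circumvent, though the paper never states it. Your method buys an explicit formula $\psi=-\sum_{j\ge0}\tilde V^{\,j+1}P_{12}\hat D_\psi^{\,j}$ (one checks, using $(A+P_{11})\tilde V=I$, that a fixed point of this assignment does satisfy the invariance equation) and hence quantitative control $|\psi|,|\chi|=O(\eps)$ on how far the new splitting moves; the paper's cone argument is softer and transfers more directly to the nonlinear setting of its final section. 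For the ``moreover'' clause the paper is essentially silent, while your argument --- shadow the bounded $T$-orbit of a transition vector to produce a nonzero point of $B(S)$, conclude $S$ is not hyperbolic, then invoke Theorems \ref{thm decomposition} and \ref{thm SH} --- is complete; the one point worth making explicit is that the Lipschitz shadowing constant of $S$ must be taken uniform over the neighbourhood of $T$, which does follow from your own estimates since the splitting and contraction rates of $S$ converge to those of $T$ as $\eps\to 0$.
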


It follows from the previous theorem and theorem \ref{bounded-transitive} that:
\begin{corollary}

Generalized  hyperbolic are robustly transitive on the non-wandering set. In particular, for shifted hyperbolic, the non-wandering set is an infinite dimensional robustly transitive subspace.

\end{corollary}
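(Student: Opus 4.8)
The plan is to assemble the two cited results, noting that this corollary is essentially a synthesis: it records that the dynamical conclusions of Theorem \ref{bounded-transitive} hold not only for a fixed $T$ but persist under small perturbations, because Theorem \ref{robustness} guarantees that the generalized hyperbolic class is open in the norm topology. The one conceptual point to keep in mind throughout is that ``robustly transitive on the non-wandering set'' refers, for each perturbed operator $S$, to transitivity of $S$ on its \emph{own} non-wandering set $\Omega(S)$, which varies with $S$; it is not a statement about a single fixed set.

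First I would fix a generalized hyperbolic operator $T$ and invoke Theorem \ref{robustness} to produce $\eps > 0$ such that every $S \in L_{aut}(\BB)$ with $|T - S| < \eps$ is again generalized hyperbolic. Applying Theorem \ref{bounded-transitive} to each such $S$ in place of $T$, item \ref{item pp} gives $\Omega(S) = \overline{B(S)}$ and that this set is a closed subspace, while item \ref{item tp} gives that $S_{|\overline{B(S)}}$ is topologically mixing. Since topological mixing is stronger than transitivity, by the definitions mixing requires every sufficiently large iterate of an open set to meet another while transitivity requires only some forward iterate, it follows that $S$ is transitive on $\Omega(S)$. As this holds for every $S$ in the $\eps$-ball around $T$, the operator $T$ is robustly transitive on the non-wandering set, which proves the first assertion.

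For the second assertion I would additionally assume $T$ is shifted hyperbolic. The ``moreover'' clause of Theorem \ref{robustness} then upgrades the conclusion: the same $\eps$ can be taken so that every nearby $S$ is itself shifted hyperbolic. Applying Theorem \ref{thm SH} to each such $S$ shows that $\overline{B(S)} = \Omega(S)$ is an infinite-dimensional subspace, coinciding with the shifted subspace of $S$; combining this with the transitivity (indeed mixing) already established in the first part yields that, for every operator in a norm-neighbourhood of $T$, the non-wandering set is an infinite-dimensional transitive subspace, which is precisely the claimed robustness. Because every step is a direct citation there is no analytic obstacle; the only thing requiring care is the logical bookkeeping just mentioned, namely applying Theorems \ref{bounded-transitive} and \ref{thm SH} to the perturbed operators $S$ rather than to $T$, and observing that the single norm-ball furnished by Theorem \ref{robustness} simultaneously controls both the generalized and the shifted hyperbolic conclusions.
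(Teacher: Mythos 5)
Your proposal is correct and follows essentially the same route as the paper, which derives the corollary directly by combining Theorem \ref{robustness} (openness of the generalized and shifted hyperbolic classes) with items \ref{item tp} and \ref{item pp} of Theorem \ref{bounded-transitive} applied to each perturbed operator $S$, the infinite-dimensionality in the shifted case being already recorded in the last clause of Theorem \ref{robustness} (equivalently, via Theorem \ref{thm SH} as you do). Your explicit remark that transitivity is asserted for each $S$ on its own non-wandering set $\Omega(S)$ is exactly the right reading of the statement.
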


Observe that the theorem states that the splitting is robust. In the finite dimensional context, if a splitting is robust it holds that the splitting is dominated which is not the case of the splitting of a  shifted hyperbolic operator.

After previous theorem and knowing that there are generalized hyperbolic operators such that $\overline {B(T)}=\BB$ it is natural to wonder what about the bounded set $B(S)$ of a nearby operator $S.$
As it was mentioned in the introduction,there are no robustly transitive linear operators; so it follows that there are nearby systems for which the bounded set can not be dense in the whole Banach space (otherwise, it would be transitive and so the initial system would be robustly transitive). However, the following theorem shows that for nearby systems the bounded set is large. 

\begin{theorem} \label{large B} Let $T$ be a generalized hyperbolic operator. Then, for any $N>0$ and  $\epsilon$ there exists $\delta$ such that if $|S-T|<\delta$ then $B_N(S)$ is $\epsilon-$dense in $B_N(T)$.


\end{theorem}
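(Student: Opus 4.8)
The plan is to use shadowing to transfer bounded $T$-orbits to nearby bounded $S$-orbits. Fix $x \in B_N(T)$, so that its full orbit $\{T^n(x)\}_{n\in\ZZ}$ lies in the ball of radius $N$. For $S$ close to $T$ this orbit is almost an $S$-orbit: since $|S(T^n x) - T(T^n x)| = |(S-T)(T^n x)| \le |S-T|\,N$, the sequence $\{T^n(x)\}_{n\in\ZZ}$ is a $\delta'$-pseudo-orbit for $S$ with $\delta' = |S-T|\,N$. By Theorem \ref{robustness}, shrinking $|S-T|$ guarantees that $S$ is itself generalized hyperbolic, hence by item \ref{item sh} of Theorem \ref{bounded-transitive} it has the Lipschitz shadowing property with some constant $K$. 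Shadowing then produces a point $y$ with $|S^n(y) - T^n(x)| \le K\delta' = K|S-T|\,N$ for all $n\in\ZZ$; taking $n=0$ gives $|y-x| \le K|S-T|\,N$, which is less than $\epsilon$ once $|S-T|$ is small.

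A small correction is needed to place the shadowing point exactly in $B_N(S)$ rather than in the slightly larger set $B_{N+\epsilon}(S)$, since the crude bound only yields $|S^n(y)| \le |S^n(y)-T^n(x)| + |T^n(x)| < N + K|S-T|\,N$. To fix this I would first shrink the target. Writing $\eta = |S-T|$ and $s = 1 - K\eta$ (with $\eta$ small enough that $0<s<1$), replace $x$ by $x' = sx$. Since $|T^n(x')| = s\,|T^n(x)| < sN$, the orbit of $x'$ is still an $(\eta N)$-pseudo-orbit for $S$, and its shadowing point $y$ satisfies $|S^n(y)| < sN + K\eta N = N$, so $y \in B_N(S)$. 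At the same time $|y - x| \le |y - x'| + |x'-x| < K\eta N + (1-s)N = 2K\eta N$, which is less than $\epsilon$ provided $\eta < \epsilon/(2KN)$. Choosing $\delta = \min\{\epsilon/(2KN),\, 1/(2K)\}$, intersected with the neighbourhood furnished by Theorem \ref{robustness}, then gives, for every $x \in B_N(T)$, a point $y \in B_N(S)$ with $|x-y|<\epsilon$; that is, $B_N(S)$ is $\epsilon$-dense in $B_N(T)$.

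The main obstacle is the uniformity of the shadowing constant $K$: the argument requires a single $K$ valid for all $S$ in a neighbourhood of $T$, whereas item \ref{item sh} of Theorem \ref{bounded-transitive} only asserts Lipschitz shadowing for each fixed operator. I would address this by extracting uniform constants from the proof of Theorem \ref{robustness}: that proof provides, on a small enough norm-ball around $T$, a splitting $\BB = F^-_S \oplus F^+_S$ for $S$ with a common contraction rate $\lambda' \in (0,1)$ and uniformly bounded projection norms. Since the Lipschitz shadowing constant built from a generalized hyperbolic splitting depends only on $\lambda'$ and on the norms of the projections, it can be bounded by a single $K$ throughout this neighbourhood (and likewise the smallness threshold on $\delta'$ can be taken uniform). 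Once this uniformity is in place the estimates above close up, and no separability or Hilbert-space hypothesis is needed.
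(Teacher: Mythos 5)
Your proof follows essentially the same route as the paper's: invoke Theorem \ref{robustness} so that $S$ is generalized hyperbolic, view the $T$-orbit of $x\in B_N(T)$ as an $(|S-T|\,N)$-pseudo-orbit for $S$, and apply Lipschitz shadowing to produce a nearby bounded $S$-orbit. Your version is in fact slightly sharper: the paper's own argument stops at the conclusion that $B_{(1+\epsilon)N}(S)$ is $\epsilon/2$-dense in $B_N(T)$, whereas your rescaling $x\mapsto (1-K\eta)x$ places the shadowing point inside $B_N(S)$ itself as the statement literally demands, and your observation that the shadowing constant can be taken uniform over the whole neighbourhood (via the uniform contraction rates and projection bounds coming out of the proof of Theorem \ref{robustness}) makes explicit a point the paper leaves implicit.
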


    In  section \ref{sec.spaces}, we discuss which Banach spaces admit shifted hyperbolic operators, and provide a slightly more general construction than the weighted shifts. 

\begin{theorem} \label{Support} 
If $E_1$ and $E_2$ are Banach spaces with bases admitting bounded right shifts, then $\BB = E_1 \oplus E_2$ supports a transitive shifted hyperbolic operator. 
\end{theorem}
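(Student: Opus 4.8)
The plan is to realize the desired operator as a two-sided weighted shift obtained by gluing the two one-sided bases along a single transition vector. Write the basis of $E_1$ as $(f_n)_{n\geq 0}$ and that of $E_2$ as $(g_n)_{n\geq 0}$, and let $R_1,R_2$ denote the given bounded right shifts, so $R_1 f_n=f_{n+1}$ and $R_2 g_n=g_{n+1}$. I index a combined basis of $\BB=E_1\oplus E_2$ by $\ZZ$, placing $E_1$ on the nonnegative side and $E_2$ (reversed) on the negative side, i.e. $h_n=f_n$ for $n\geq 0$ and $h_{-1-n}=g_n$ for $n\geq 0$. I then define $T$ by $T f_n=\lambda f_{n+1}$ for $n\geq 0$, $T g_0=f_0$, and $T g_m=\mu^{-1}g_{m-1}$ for $m\geq 1$, where the scalars $\lambda,\mu\neq 0$ are fixed below. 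On the combined basis $T$ is exactly a weighted bilateral shift whose only ``crossing'' is $h_{-1}=g_0\mapsto h_0=f_0$.

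To see that $T$ is shifted hyperbolic, set $E^+=E_1=\overline{\mathrm{span}}\{f_n\}$ and $E^-=E_2=\overline{\mathrm{span}}\{g_n\}$. By construction $T(E^+)\subseteq E^+$ and $T^{-1}(E^-)\subseteq E^-$, with $T|_{E^+}=\lambda R_1$ and $T^{-1}|_{E^-}=\mu R_2$. Using the spectral-radius characterization stated after the definition of generalized hyperbolicity, I choose $|\lambda|$ and $|\mu|$ small enough that the spectral radii of $\lambda R_1$ and $\mu R_2$ are strictly less than $1$; then both restrictions are uniform contractions and $T$ is generalized hyperbolic. A direct computation gives $T^{-1}(E^+)=\langle g_0\rangle\oplus E_1$, so the transition subspace is $E_0=T^{-1}(E^+)\cap E^-=\langle g_0\rangle$, which is nontrivial because $T g_0=f_0\in E^+$. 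Hence $T$ is shifted hyperbolic, and by Theorem \ref{thm decomposition} it is not hyperbolic.

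The crux is to verify that $T\in L_{aut}(\BB)$, and I expect this to be the main technical obstacle. Boundedness of $T|_{E^+}=\lambda R_1$ and of $T^{-1}|_{E^-}=\mu R_2$ is immediate from the bounded right shifts. The remaining blocks $T|_{E^-}$ and $T^{-1}|_{E^+}$ are the associated backward shifts, suitably scaled, together with the bounded rank-type crossing maps $g_0\mapsto f_0$ and $f_0\mapsto g_0$; this is the step in which the hypothesis on the two bases must be used in full. One checks that the glued weighted shift is bounded, injective and surjective with weights bounded above and bounded away from $0$, so that $T^{-1}$ is bounded by the open mapping theorem. Once boundedness and bijectivity are established, the contraction estimates above complete the proof that $T$ is a shifted hyperbolic automorphism.

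Finally I show that $T$ is transitive on all of $\BB$. By Theorem \ref{thm SH}, $\overline{B(T)}$ equals the shifted subspace $\Sigma$, the smallest closed $T$-invariant subspace containing $E_0=\langle g_0\rangle$. Iterating the transition vector forward yields $T^{n}g_0=\lambda^{\,n-1}f_{n-1}$ for $n\geq 1$, so the forward orbit of $g_0$ spans a dense subset of $E_1$; iterating backward yields $T^{-n}g_0=\mu^{\,n}g_n$, so the backward orbit spans a dense subset of $E_2$. Hence $\Sigma\supseteq E_1+E_2$ is dense, i.e. $\Sigma=\BB$, so $\overline{B(T)}=\BB$. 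By Theorem \ref{bounded-transitive}, the restriction of $T$ to $\overline{B(T)}=\BB$ is topologically mixing and in particular transitive, which is exactly the assertion that $\BB=E_1\oplus E_2$ supports a transitive shifted hyperbolic operator.
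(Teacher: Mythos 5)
Your construction is essentially the paper's own proof of Theorem \ref{Support}: both glue the two given bases into a single two-sided weighted shift with one crossing vector, choose the weights so that $E_1$ is forward-contracted and $E_2$ is backward-contracted, identify the crossing vector as the transition vector, and obtain transitivity because the orbit of that vector spans a dense subspace, so $\Sigma=\BB$ and Theorems \ref{thm SH} and \ref{bounded-transitive} apply. The differences are cosmetic ($\ZZ$-indexing versus the paper's even/odd interleaving, a spectral-radius argument versus the explicit bounds $\alpha<\frac{1}{2|R_1|}$), and your deferral of the boundedness of the backward blocks to ``one checks'' is at the same level of detail as the paper's one-line assertion that $T$ is a continuous bijection.
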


\section{Examples and relations with PDE}
\label{sec.examples}

We recall some classical examples of weighted shift, some of which are generalized hyperbolic operators. Moreover, in certain cases they are strong shifted hyperbolic.

From the theorem in the present paper, each of these examples provided an open class of examples.

\paragraph{1- Classical weighted shifts over $\ZZ$.} Let $\cal B$ either $l^p(\ZZ)$ or $C_0(\ZZ)$ and let $T$ defined on the canonical base as  $T(e_n)=a_n.e_{n+1}$ with $\lambda^{-1}< a_n< \sigma$ for $n<0$ and $\sigma^{-1}< a_n< \lambda$ for $n>0$ where $\lambda<1<\sigma.$ Observe that $T$ is strong shifted hyperbolic and the subspace decomposition is given by the subspaces $E^-=\{v\in {\cal B}: v_n=0\, \mbox{ for}\, n>0\}$
and  $E^+=\{v\in {\cal B}: v_n=0\, \mbox{ for}\, \leq 0\}$.

\paragraph{2- Product of generalized hyperbolic operators.} From those  examples, we can consider linear operators operators on large dimensional lattices. In fact, one can  consider  $T_1\dots T_k$ transitive  weighted shift operators acting on $l^p(\ZZ)$ or $C_0(\ZZ)$ and so 
$\bar T=T_1\times\dots\times T_k$ is a generalized hyperbolic. Since each $T_i$ is topologically mixing then $\bar T$ is transitive.

\paragraph{3- Operators in $L^2(\mathbb{R})$.} 
\label{L2}

Let $\psi:\mathbb{R}\rightarrow \mathbb{R}$ in $L^2(\mathbb{R})$  and let 
$$
\left[T_{t_0}\psi\right](x)=\lambda_{t_0}(x)\psi(x-t_0)
$$

where 

$$\lambda_{t_0}(x)= \exp(\int_0^{t_0} \gamma(x-s) ds)$$

with $\gamma(x)>\gamma_0>0$ for $x\leq 0$ and $\gamma(x)<-\gamma_0< 0$ for $x>0$. 

Observing that $L^2(\mathbb{R})=E^-+E^+$, where $E^-=\{\psi: \psi(x)=0 \ for \ x<0\}$ and $E^+=\{\psi: \psi(x)=0 \ for \ x>0\}$, it follows that  $T_{t_0}$ for $t_0>0$  is strongly shifted hyperbolic for that decomposition.  From theorem \ref{thm StSH} it follows that they are transitive.

\paragraph{4- One-parameter group of  weighted shifts.}\label{group}

Consider the family of linear  operators defined above and  observe  that 
$$T_t\circ T_s= T_{t+s},\,\,\,\,\,\,\,\, T_0=Id,$$
therefore, $(T_t)_{\in \RR}$ is a one parameter of weighted shift. More precisely, ${\mathcal T}: \RR\to L(L^2(R)) $ with ${\mathcal T}(t)=T_t$ is an action of $\RR$ over the linear operators of $L^2(\RR).$

\paragraph{5- Generalized hyperbolicity and  PDE's.}

Let us consider the  one-parameter group of operators introduced above and let us take the time derivative at $s=0$:

\begin{eqnarray*}
\partial_s T_s(\psi)(x)|_{s=0}= & &\partial_s [\lambda_s(x)\psi(x-s)]|_{s=0}\\
&=& (\partial_s\lambda_s(x))|_{s=0}\cdot \psi(x-s)+\lambda_s(x)\partial_s\psi(x-s)|_{s=0}\\
&=& \gamma(x) \psi(x) - \partial_x\psi(x).
\end{eqnarray*}

So, one can get a   linear partial differential equation (pde), 
$$\partial_s \psi= -\partial_x \psi + \gamma \cdot \psi,$$
in other words, a pde induced by the linear operator $\psi\to -\partial_x \psi+\gamma \cdot \psi$ acting on the Sobolev space $W^{k,p}$
and observe that the solution are given by the one-parameter group described above which are a one-parameter  group of strong shifted hyperbolic operators. 
In  the  particular  case  of $W^{k,2}$,  the  solutions  are  acting  on  a  Hilbert  space  and therefore by theorem 4 the solution are given by a transitive one-parameter group of liner operators.

We want to stress, that one can consider now small linear perturbation of previous pde. More precisely, one can consider equations of the form
$$\partial_s \psi= L(\psi),$$
where $L $ is a linear operator close to $\psi\to -\partial_x \psi+\gamma \psi$. The solution are given by a one-parameter  group of linear  operator close to the one-parameter  group of solution given by the initial equation.

One can also  consider a discretization of a PDE on the line where the time derivative is replaced by $\psi_{t+1}-\psi_t$ and the space derivative by $\psi_t(n+1)-\psi_t(n)$. In that sense, one can consider
the following linear equation in difference in $\RR \times B(\ZZ)$
\begin{eqnarray*}\label{pde 1}\psi_{t+1}(n)-\psi_t(n)= \psi_t(n-1)-\psi_t(n)+ V(n).\psi_t(n-1),
\end{eqnarray*}
where $\psi: \RR \times \ZZ\to \RR$ and $V:\ZZ\to \RR$; that equation can be understood as a discretization of the PDE $\partial_t \psi= -\partial_x \psi+ V(x)\cdot \psi(x).$ The equation  (\ref{pde 1}) can be rewritten as 
\begin{eqnarray*}\label{pde 2}\psi_{t+1}(n)= \psi_t(n-1)(1+ V(n-1)),
\end{eqnarray*}
and if $ V(n) > \alpha> 0 $ for $n<0$ and  $-2<\beta < V(n)< 0$  for $ n>0 $ then the operator defined by  that   equation  is a weighted shift and in particular, a generalized hyperbolic operator.

\paragraph{6- General construction of shifted hyperbolic.}

We provide a general class of examples based on  hyperbolic operators and one that mixed the hyperbolic directions.

Let $H$ be a bounded invertible hyperbolic operator  with splitting 
$E = E^s \oplus E^u$ and let  $U: \BB \rightarrow \BB$ be a bounded invertible operator such that $||H_{/E^u}||^{-1}< ||U||<||H_{/E^s}||^{-1}$ and  $U^{-1}(E^u) \subset E^u$ and $U(E^s) \subset E^s$, $U(E^u) \cap E^s \neq \varnothing$. 
Observe now  that  $T:=US$ is a shifted hyperbolic operator: if $x \in E^s$, then 
$|Tx| = |UHx| \leq ||U||||H_{/E^s}|| \leq \lambda |x|$
If $x \in E^u$, then
$$|T^{-1}x| = |H^{-1}U^{-1}x| \leq |(H_{/E^u})^{-1}| |U^{-1}x| = \lambda |x|$$ 
since $U^{-1}x \in E^u$. Thus the splitting for $T$ is same as that of $H$.

\paragraph{7- Transitive shifted hyperbolic with a hyperbolic subspace.} A variation of the classical weighted shifts, allows to get a shifted hyperbolic operator with a hyperbolic subspace but still $B(T)$ is dense in $\BB:$ 
\begin{enumerate}
    \item[--] the subspace  $<e_{-1}, e_0>$ is a hyperbolic invariant subspace with $T^{-1}(e_{-1})=\frac{1}{2}e_{-1}$ and $T(e_{0})=\frac{1}{2}e_{0}$;
    
    \item[--]  the transition subspace is given by $e_{-2}$ with  $T(e_{-2})=e_1$,

  \item[--] $T^{-1}(e_{-2})=\frac{1}{2}(e_{-1}+\frac{1}{3} e_{-3}),$ $T(e_{1})=\frac{1}{2}(e_{0}+\frac{1}{2} e_{2});$
  
  \item[--] for any $n\leq -3$, $T^{-1}(e_n)=\frac{1}{2}\frac{n}{n+1}e_{n-1}$ and for $n\geq 2$, $T(e_n)=\frac{1}{2}\frac{n}{n+1} e_{n+1}$.

\end{enumerate}

  Observe that $E^-=\{v\in {\cal B}: v_n=0\, \mbox{ for}\, n>0\}$
and  $E^+=\{v\in {\cal B}: v_n=0\, \mbox{ for}\, \leq 0\}$. 
Defining $v_n= e_{-1}+ \frac{1}{n}e_n $ for $n\leq -3$ holds that $T^{-1}(v_n)=\frac{1}{2} v_{n-1}$, and if $v_n= e_{0}+ \frac{1}{n}e_n $ for $n\geq 2$, it holds that $T(v_n)= \frac{1}{2}v_{n+1}$, therefore, $\oplus_{k\leq 0}T^{-k}(E_0)$ is dense in $E^-$ and $\oplus_{k> 0}T^{k}(E_0)$ is dense in $E^+.$

\paragraph{8- Shifted hyperbolic operator with a hyperbolic component but without a invariant hyperbolic subspace} In the present example, it is shown a generalized hyperbolic operator exhibiting a hyperbolic component without a hyperbolic subspace in the complement of $\Sigma.$ Let $T$ be a weighted shift acting on $l^2(\ZZ)$ such that $T(e_n)=7.e_{n+1}$ if $n\leq -2$,$T(e_n)=\frac{1}{7}.e_{n+1}$  if $n\geq 1$, $T(e_{-1})=7.e_1,$ and $T(e_0)=w+2.e_0$ with $w=\sum_{n < 0} 2^n.e_n$.
In other words, $T$ in all the entries different than $0$ it looks like the weighted shift with the caveat that $e_{-1}$ is sent to $e_1$ and $e_0$ is mapped into a combination of $w$ and  $e_0.$ Observe that  the subspace $\Sigma$ generated by $e_{-1}$ is closed and $B(T)$ is contained in $\Sigma$. It also holds that  $|T^k(w)|\geq (7/2)^k$ and so $|(\Pi_0\circ T)(e_0)|\geq 2^k$ (whee $\Pi_0$ is the projection over the subspace generated by $e_0$) and so $T$ has a hyperbolic component since $T$ acting on $l^2(\ZZ)/\Sigma$ is isometric to
$\Pi_0\circ T.$ On the other hand, $\frac{|\Pi_0(T^k(e_0))|}{|\Pi_\Sigma(T^k(e_0))|}\to 0.$ In other words, even if for the quotient operator, the vector $e_0$ is invariant and expanding, for $T$ its forward iterates remain expanded but their slope with  $\Sigma$ goes to zero, preventing the existence of an invariant subspace for $T$ in the complement of $\Sigma$ (using that the complement of $\Sigma$ has dimension one, if there is an invariant subspace has to be the limit of the iterates of $e_0$).


\section{Dynamical consequences of  Generalized  Hyperbolicity: Proof of theorem \ref{bounded-transitive}}
\label{properties}

The proof of theorem \ref{bounded-transitive} is divided in different subsections. First  it is shown that the bounded set  is topologically mixing. Then, it is shown that the set of periodic points are dense and that the bounded set is a subspace.  Finally, the shadowing property is revisited. We recall that generalized hyperbolics have the shadowing property and prove that uniqueness of the shadowing orbit is equivalent to hyperbolicity. At the end, it is described the unbounded set.

 First observe that  the  decomposition of $\BB$ in complementary closed subspaces, induces a decomposition  at  any point $x$ in two affine subspaces 
$$E^-_x:= E^-+ x,\,\,\, E^+_x:= E^++x,$$
and observe that 
$T^{-1}(E^-_x)= T^{-1}(E^-)+ T^{-1}(x)\subset E^-+T^{-1}(x)=E^-_{T^{-1}(x)}$ and  $T(E^+_x)= T(E^+)+T(x)\subset E^++T(x)=E^+_{T(x)}.$ In that way, given a neighborhood $B_r(x)$ of size $r$ around $x$, one can define  $B^-_r(x)=:B_r(x)\cap E^-_x$ and  $B^+_r(x)=:B_r(x)\cap E^+_x$.
 
Using that decomposition, even the subspaces $E^-$ and $E^+$ are not invariant, it is proved that they resemble certain properties of unstable and stable subspaces when are forwardly and backwardly iterated. In short, it is proved that the the iterates of a ball restricted to the subspace $E^-$ contain ball of larger radius in the $E^-$ subspace.
\begin{lemma}\label{st-ust}
Let $B_r(x)$ be a neighborhood of size $r$ around $x$, then 
\begin{enumerate}
    \item $T^k(B^-_r(x))\supset B^-_{\la^{-k}\cdot r}(T^k(x))$ for $k\geq 0;$

    \item   $T^{-k}(B^+_r(x))\supset B^+_{\la^{-k}\cdot r}(T^{-k}(x))$ for $k\geq 0$.
\end{enumerate}

\end{lemma}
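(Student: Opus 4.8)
The plan is to verify the set inclusion directly by producing, for each point of the target ball on the right-hand side, an explicit preimage lying inside the source ball on the left-hand side. The two statements are symmetric under the exchange $T \leftrightarrow T^{-1}$, $E^- \leftrightarrow E^+$, so I would prove the first and obtain the second by the identical argument.

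First I would record the two ingredients the hypotheses supply on the affine pieces. After the renorming that sets $C=1$, the uniform contraction of $T^{-1}$ on $E^-$ gives $|T^{-k}(w)| \leq \la^k |w|$ for every $w \in E^-$ and $k \geq 0$; dually $|T^k(v)| \leq \la^k |v|$ for $v \in E^+$. Equally important is the backward invariance already noted in the excerpt: since $T^{-1}(E^-) \subset E^-$, iterating gives $T^{-k}(E^-) \subset E^-$ for all $k \geq 0$, and symmetrically $T^k(E^+) \subset E^+$. I emphasize that $E^-$ need not be forward invariant, which is exactly why the conclusion is an inclusion and not an equality.

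For the first claim I would take an arbitrary $z \in B^-_{\la^{-k} r}(T^k(x))$ and write it as $z = T^k(x) + u$ with $u \in E^-$ and $|u| < \la^{-k} r$. I then set $y := x + T^{-k}(u)$. Since $u \in E^-$ and $T^{-k}(E^-) \subset E^-$, the correction $T^{-k}(u)$ lies in $E^-$, so $y \in E^-_x$; and the contraction estimate gives $|y-x| = |T^{-k}(u)| \leq \la^k |u| < \la^k \cdot \la^{-k} r = r$, so $y \in B^-_r(x)$. Finally, by linearity $T^k(y) = T^k(x) + T^k(T^{-k}(u)) = T^k(x) + u = z$, hence $z \in T^k(B^-_r(x))$, which is the desired inclusion. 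The second claim follows verbatim upon replacing $T^k$ by $T^{-k}$ and $E^-$ by $E^+$, using $T^k(E^+)\subset E^+$ together with $|T^k(v)| \leq \la^k |v|$.

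There is no genuine obstacle here; the only point requiring care is to keep track of which invariance is actually available. Because $E^-$ is not preserved under forward iteration, one cannot argue by pushing $B^-_r(x)$ forward directly and comparing radii; instead the argument must be run ``backwards'', pulling the target point back with $T^{-k}$, where both the invariance $T^{-k}(E^-)\subset E^-$ and the honest contraction of $T^{-k}$ on $E^-$ are simultaneously in force. This is precisely the feature responsible for the gain of radius from $r$ to $\la^{-k} r$.
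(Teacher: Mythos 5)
Your proof is correct and is essentially the paper's argument: the paper's one-line observation that $T^{-1}(B^-_{\gamma}(T(x)))\subset B^-_{\lambda\gamma}(x)$ is exactly your step of pulling a target point back via $T^{-k}$ using the backward invariance $T^{-k}(E^-)\subset E^-$ and the contraction of $T^{-1}|_{E^-}$. You merely unpack it at the level of points and treat general $k$ directly instead of iterating the $k=1$ case.
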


\begin{proof}

 It is enough observe that $T^{-1}(B^-_{\gamma}(T(x)))\subset B^-_{\lambda.\gamma}(x)$ which implies $B^-_{\gamma}(T(x))\subset T(B^-_{\lambda.\gamma}(x))$ and taking $\lambda.\gamma=r$ we get $T(B^-_{r}(x))\supset B^-_{\lambda^{-1}\cdot r}(T(x))$. The proof of  second item is similar.
\end{proof}

\subsection{ Transitivity and topologically mixing  of the Bounded set}

The proof of the first item of  theorem \ref{bounded-transitive} is given in two steps; first it is proved that it is transitive and using that it is  proved that it is topologically mixing. For the first part, it is used lemma \ref{st-ust} to  show  that  forward  and  backward  iterates  of  two  open  sets eventually intersect, provided that some iterates remain bounded.

\vskip 5pt

 \noindent{\em Proof of item \ref{item tp} of theorem \ref{bounded-transitive}:} First, we proceed to prove the transitivity. 
 Given $U$ and $V$ open sets such that $B(T)\cap U$ and $V\cap B(T)$ are non empty and let $x$ and $y$ be  two points in these respective  sets; let $K(x)$ and $K(y)$ be  the positive constants that verify the definition and $k_n$ and $m_n$ be the two subsequences of iterates  $||T^{k_n}(x)||< K(x)$ and  $||T^{-m_n}(y)||< K(y)$. Observe that $T^{k_n}(B^-_r(x))\supset B^-_{\la^{-k_n}.r}(T^{k_n}(x))$ and $T^{-m_n}(B^+_r(y))\supset B^+_{\la^{-m_n}.r}(T^{-m_n}(y))$. 

For $n$ sufficiently large, and from the fact that $dist(T^{k_n}(x), T^{-m_n}(y))< K(x)+K(y)+dist(x,y)$, then $B^+_{\la^{-m_n}.r}(T^{-m_n}(y))\cap B^-_{\la^{-k_n}.r}(T^{k_n}(x))\neq \emptyset$ and therefore   $$T^{k_n}(B^-_r(x))\cap T^{-m_n}(B^+_r(y))\neq \emptyset;$$
moreover, any point $z$ in that intersection also belongs to $B(T)$ since $dist(T^l(z), T^{l}(y))$ and  $dist(T^{-l}(z), T^{-l}(x))$ go to zero as $l\to +\infty.$

Now, we proceed to show that $T_{|B(T)}$ is topologically mixing: 
From the first item in lemma \ref{st-ust}, since   $0$  is  a fixed point, it follows  that 
\begin{eqnarray}\label{zero}
T^k(B^-_\eps(0))\supset T^{k-1}(B^-_\eps(0)).
\end{eqnarray}

Also observe that for any $\eps>0$ the set    $\cup_{k>0}T^k(B^-_\eps T(0))$ is dense. In fact that, given an open set $V$ in fact by the transitivity, there is $x\in B_\eps(0)$ and $n$ arbitrary large  such that $T^n(x)\in V$, so, taking $x^-=\Pi^-(x)$ and since $dist(T^n(x), T^n(x^-))<\la^n.\eps$ it  follows that there is $k$ such that $T^k(B^-_\eps(0))\cap V\neq \emptyset$ and from  (\ref{zero}) it holds that  
\begin{eqnarray}\label{int zero}
T^n(B^-_\eps(0))\cap V\neq \emptyset\,\,\,\,\,\forall\,\, n\geq k.
\end{eqnarray}

The same argument can be performed for backward iterates and for $B^+_\eps(x)=:B_\eps(x)\cap E^+_x$ concluding that  given an open set $U$ there is $k>0$ such that $T^{-k}(B^+_\eps(0))\cap U\neq \emptyset.$ Let $z\in T^{-m}(B^+_\eps(0))\cap U$ and $\eps'$ such that $B^-_{\eps'}(z)\subset U$ and so $T^m(B^-_{\eps'}(z))\cap B_\eps(0)\neq \emptyset$; since $T^m(B^-_{\eps'}(z))\supset B^-_{\la^{-m}\cdot\eps'}(T^m(z))$ it holds that provided $m$ large enough $\Pi^-(T^m(B^-_{\eps'}(z)))\supset \Pi^-(B^-_{\la^{-m}\cdot\eps'}(T^m(z)))\supset B^-_\eps(0)$; taking $U'\subset T^m(B^-_{\eps'}(z))$ such that $\Pi^-(U')=B^-_\eps(0)$ it follows that $dist(T^n(U'), T^n(B^-_\eps(0))\leq \la^n.\eps$ and so by (\ref{int zero}) it follows that there is $k$ such that $T^n(U')\cap V\neq \emptyset$ for any $n\geq k$ and so $T^{n+m}(U)\cap V\neq \emptyset$ for any $n$ large enough.

\qed



\subsection{$\overline{B(T)}$ is a subspace and the  periodic orbits are dense }

The classic proof of hyperbolic dynamics proves the density of periodic points.

\vskip 5pt

 \noindent{\em Proof of item \ref{item pp} of theorem \ref{bounded-transitive}:} Let $z\in B(T)$;   if $z$ is not fixed (otherwise there is nothing to prove), a small neighborhood $U$ of $z$ is disjoint with its first iterate, ($U$ and $T(U)$ are disjoint), since $T_{|B(T)}$ is transitive, there is a forward iterate of $T(U)$ that intersects $U$ therefore, there is a point $x$ in $U$ such that $T^n(x)$ is also in $U$ and so, close to $x$. Let $\ga_1<\ga_2$ such that for any $y\in B^-_{\ga_1}(x)$ it holds that $(E^+ + y)$ intersects $ B^-_{\ga_2}(T^n(x))$ (which is also unique). Observe that if $T^n(x)$ is sufficiently close to $x$ then $\gamma_1$ and $\gamma_2$ can be taken close to zero. Let $\Pi^+:B^-_{\ga_1}(x)\to B^-_{\ga_2}(T^n(x))$ defined as $\Pi^+(y)$ being the unique intersection point of  $(E^+ + y)$ with $ B^-_{\ga_2}(T^n(x))$. If $n$ is large enough, it follows that $T^{-n}(B^-_{\ga_2}(T^n(x)))\subset B^-_{\ga_1}(x)$ and $\Pi^+\circ T^{-n}: B^-_{\ga_2}(T^n(x))\to B^-_{\ga_2}(T^n(x))$ is a contraction, and therefore there is $p\in B^-_{\ga_2}(T^n(x))$ such that $\Pi^+\circ T^n(p)=p.$ Let $\hat p=(\Pi^+)^{-1}(p)$ and observe that $T^n(\hat p)=p$. Let $\de>0$ be close to zero and larger than the distance from $x$ to $T^n(x)$. It follows that $T^n(B_\de^+(\hat p))\subset B^+_{\la^n.\de}(p)$ and so it is contained in $B^+_\de(p)$; moreover, $T^n: B_\de^+(\hat p) \to B_\de^+(p)$ is a contraction and so it has a unique fixed point $q$. In particular,  it follows that $T^n(q)=q.$
 
 To prove that $\overline{B(T)}$ is a subspace, it is enough to note that if $p$ and $q$ are periodic points, then $p+q$ is also a periodic point.

\qed

\subsection{Shadowing } 
\label{shadowing}

It was essentially shown in \cite{BCDMP} that generalized hyperbolic operators have the shadowing property: ``Suppose that $\BB = M \oplus N$, where $M$ and
$N$ are closed subspaces of $\BB$ with $T(M ) \subset  M$ and $T^{-1} (N) \subset N$. If $\sigma(T|_M) \subset \DD$ and $\sigma(T^{-1}|_N) \subset \DD$, then $T$
has the shadowing property". The classic proof of the shadowing property is sketched below. The proof has become standard, see proposition 8.20 in \cite{S} for the full details.

\vskip 5pt

\noindent{\em Proof of item \ref{item sh} of theorem \ref{bounded-transitive}:}  Given $n>0$ and $1\leq k\leq n$ it is chosen a sequences $(z^n_{n-k})$ such that $z^n_{n-k}= E^+(T^{-1}(z^n_{n-(k-1)}))\cap E^-(T(x_{n-k})).$  From the fact that $E^-$ and $E^+$ are backwardly and forwardly contracted,respectively, it follows that the sequence $(z^n_0)$ converges to a point $z$ that its orbits shadows the pseudo-orbit.

\qed

Recall that the spectrum $\sigma (T)$ of $T \in L(\BB)$ is the set of all $\lambda \in \mathbb{C}$ for which $T- \lambda I$ is not an automorphism. We denote by $\sigma_a(T)$ the set of approximate eigenvalues of $T$, that is the set of $\lambda \in \mathbb{C}$ for which given $\varepsilon > 0$ there exists an $x_{\varepsilon} \in S_\BB$ satisfying $|Tx_{\varepsilon} - \lambda x_{\varepsilon}| <\varepsilon$.

The map $f$ is said to have the unique shadowing property if there exist $\varepsilon_0, \delta_0 > 0$ such that for all $0< \varepsilon \leq \varepsilon_0$ there exists $0 < \delta \leq \delta_0$ so that every $\delta$-pseudo-orbit is uniquely $\varepsilon$-shadowed by a point in $X$. It is well known that hyperbolic linear maps have the unique shadowing property.

A homeomorphism $f$ of a complete metric space is said to be expansive if there is a constant $\varepsilon(f) > 0$ such that if $d(f^nx, f^ny) \leq \varepsilon(f)$ for all $n\in \mathbb{Z}$, then $x = y$. The constant $\varepsilon(f)$ does depend on the metric, but its existence does not depend on the metric (up to strong equivalence of metrics.) It is shown in \cite{BCDMP} that a linear automorphism of a Banach space is expansive if and only if for each $x \in S_{\BB}$ there is an $n \in \mathbb{Z}$ (depending on $x$) so that $|T^nx| \geq 2$ if and only if $T$ has no non-zero bounded orbits. If there is an $n > 0$ so that either $|T^n(x)| \geq 2$ or $|T^{-n}(x)| \geq 2$ for all $x \in S_{\BB}$, then $T$ is said to be uniformly expansive. 

 The following proposition due to \cite{Pi} provides an alternative characterization of shadowing for linear automorphisms. 
 
 \begin{proposition} $T \in L_{aut}(\BB)$ has the shadowing property if and only if there exists a constant $K > 0$ such that for any bounded sequence $\{z_n\}$ there exists $\{y_n\}$ such that 
 $$\sup_{n}|y_n| \leq K \sup_n|z_n|$$ 
 and $y_{n+1} = Ty_n + z_n$ for all $n \in \mathbb{Z}$. 
 \end{proposition}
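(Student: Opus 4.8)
The plan is to set up an explicit dictionary between pseudo-orbits and bounded solutions of the affine recursion, and then read off each implication from this correspondence together with the linearity and invertibility of $T$. The key observation is the following. Given a sequence $\{x_n\}$ in $\BB$, set $z_n := x_{n+1} - T(x_n)$; then $\{x_n\}$ is a $\delta$-pseudo-orbit precisely when $\sup_n |z_n| \leq \delta$. Conversely, given any bounded sequence $\{z_n\}$, one can build a pseudo-orbit realizing it by fixing $x_0 = 0$ and iterating $x_{n+1} = T(x_n) + z_n$ forward and $x_{n-1} = T^{-1}(x_n - z_{n-1})$ backward (here invertibility of $T$ is used). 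On the other side, if $y \in \BB$ is a candidate shadowing point, put $y_n := x_n - T^n(y)$; a direct computation gives $y_{n+1} = T(y_n) + z_n$, and $y$ does $\varepsilon$-shadow $\{x_n\}$ exactly when $\sup_n |y_n| < \varepsilon$. Thus shadowing points of $\{x_n\}$ correspond to solutions of the recursion $y_{n+1} = T y_n + z_n$, via $y \leftrightarrow \{x_n - T^n(y)\}$, with the correspondence recovered through $y = x_0 - y_0$.

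For the implication ``solvability $\Rightarrow$ shadowing'', I would proceed as follows. Given $\varepsilon > 0$, set $\delta := \varepsilon/(2K)$. Let $\{x_n\}$ be any $\delta$-pseudo-orbit and form $z_n = x_{n+1} - T(x_n)$, so $\sup_n |z_n| \leq \delta$. By hypothesis there is $\{y_n\}$ with $y_{n+1} = T y_n + z_n$ and $\sup_n |y_n| \leq K \sup_n |z_n| \leq K\delta$. Define $y := x_0 - y_0$. Since both $\{y_n\}$ and $\{x_n - T^n(y)\}$ satisfy the same recursion and agree at $n=0$, and the recursion determines the whole sequence uniquely in both time directions, they coincide; hence $|x_n - T^n(y)| = |y_n| \leq K\delta < \varepsilon$ for all $n$, which is exactly the required $\varepsilon$-shadowing.

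For the converse ``shadowing $\Rightarrow$ solvability'', I would fix once and for all a single pair $(\varepsilon,\delta)$ coming from the shadowing property (say $\varepsilon=1$ with its associated $\delta$) and exploit homogeneity. Given an arbitrary nonzero bounded sequence $\{z_n\}$ with $M := \sup_n |z_n|$, rescale to $\tilde z_n := \frac{\delta}{2M} z_n$, so that $\sup_n |\tilde z_n| \leq \delta/2 < \delta$. Build the pseudo-orbit $\{x_n\}$ realizing $\{\tilde z_n\}$ as above; by shadowing there is $y$ with $|x_n - T^n(y)| < \varepsilon$, and $\tilde y_n := x_n - T^n(y)$ solves $\tilde y_{n+1} = T \tilde y_n + \tilde z_n$ with $\sup_n |\tilde y_n| \leq \varepsilon$. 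Undoing the scaling, $y_n := \frac{2M}{\delta}\tilde y_n$ solves $y_{n+1} = T y_n + z_n$ with $\sup_n |y_n| \leq \frac{2\varepsilon}{\delta} M = \frac{2\varepsilon}{\delta}\sup_n|z_n|$, so $K := 2\varepsilon/\delta$ works (the case $M=0$ being trivial with $y_n \equiv 0$).

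The main obstacle to watch is the uniformity of the constant $K$: the shadowing property only furnishes, for each $\varepsilon$, some $\delta$, with no a priori linear relation between them, so one must use the scale-invariance of both the recursion and the norm estimate to manufacture a single $K$ from one $(\varepsilon,\delta)$ pair. The remaining points — that the recursion is solved uniquely in both time directions (so the two correspondences are mutually inverse) and that the pseudo-orbit built from $\{z_n\}$ is genuinely a $\delta$-pseudo-orbit over all of $\mathbb{Z}$ — are routine consequences of the invertibility of $T$.
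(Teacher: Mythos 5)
Your proof is correct and follows essentially the same route as the paper: both rest on the dictionary $z_n = x_{n+1} - T x_n$, $y_n = x_n - T^n(y)$ between $\delta$-pseudo-orbits with their shadowing points and bounded solutions of the affine recursion $y_{n+1} = T y_n + z_n$, with invertibility of $T$ guaranteeing the two-sided uniqueness that identifies the two sequences. The only cosmetic difference is that the paper packages the recursion as surjectivity of the operator $T_\infty$ on $l^\infty(\BB)$ and extracts the constant via the open mapping theorem, whereas you obtain $K$ directly by the homogeneity/rescaling argument — which works because your hypothesis already carries the quantitative bound.
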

 
 The sequence condition may be rephrased in terms of the surjectivity of a certain linear operator on $l^{\infty}(\BB)$. This perspective is useful because it follows immediately that shadowing is an open property for linear automorphisms, which is not true for general homeomorphisms. 

\begin{definition}Let $\BB$ be a separable Banach space. Define $l^{\infty}(\BB)$ by  
$$l^{\infty}(\BB) = \{ \{x_n\} \in \BB^{\mathbb{Z}} \ | \ \sup_{n \in \mathbb{Z}}|x_n| < \infty \}$$
Note that $l^{\infty}(\BB)$ is Banach with $|\{x_n\}|_{\infty}:=\sup_{n \in \mathbb{Z}}|x_n|$. $T \in L(\BB)$ induces a map $T_{\infty}: l^{\infty}(\BB) \rightarrow l^{\infty}(\BB)$ defined coordinate-wise by  
$T_{\infty}(\{x_k\})_n = x_{n+1} - Tx_n.$ 
\end{definition}

 The map $T \mapsto T_{\infty}$ is non-linear, but $T_{\infty}$ is a bounded linear operator whose kernel is exactly the set of bounded $T$ orbits. It follows that $T_{\infty}$ is injective if and only if $T$ is expansive. For $S, T \in L(\BB)$ automorphisms, $((T_{\infty} - S_{\infty})\{x_k\})_n = Sx_n - Tx_n$, so $|T_{\infty} - S_{\infty}| \leq |T-S|$. The proof of the next proposition is only a slight adaptation of the proof of proposition $1$ that appears in \cite{BCDMP}.

\begin{proposition} $T \in L_{aut}(\BB)$ has the shadowing property if and only if $T_{\infty}$ is surjective.
\begin{proof}Suppose that $T$ has the shadowing property. Let $\delta > 0$ be the constant corresponding to $\varepsilon = 1$. Choose $\{z_n\} \in l^{\infty}(\BB)$ with $|\{z_n\}|_{\infty} \leq \delta$. Given $x_0 \in \BB$, define a $\delta$-pseudo-orbit by 
$$x_{n+1} = Tx_n + z_n$$ 
Then there exists $x \in \BB$ with $|T^nx - x_n|< 1$ for all $n \in \mathbb{Z}$. If $y_n = x_n - T^nx$, then 
$$T_{\infty}(\{y_n\}) = x_{n+1} - T^{n+1}x - T(x_n - T^nx) = x_{n+1} - Tx_n = z_n$$
since $|\{y_n\}|_{\infty} < 1$, $T_{\infty}$ is surjective. 
$\\ \indent$ Conversely, suppose that $T_{\infty}$ is surjective. Let $\{x_n\}$ be a $\delta$-pseudo-orbit. Then $\{x_{n+1} - Tx_n\} \in B(0, \delta)$. By hypothesis there is $\{y_n\}$ such that 
$$T_{\infty}(\{y_n\}) = y_{n+1} - Ty_n = x_{n+1} - Tx_n$$ 
which implies that $y_n - x_n = T^n(y_0 - x_0)$. For any $n \in \mathbb{Z}$,  $|x_n - T^n(x_0 - y_0)| = |y_n| \leq |\{y_k\}|_{\infty}$. By the open mapping theorem there is $ r > 0$ such that $B(0,r) \subset T_{\infty}(B(0,1))$. By linearity, $B(0, \delta) \subset T_{\infty}B(0, \frac{\delta}{r})$. This implies every $\delta$-pseudo-orbit is $\frac{\delta}{r}$-shadowed. 
\end{proof}
\end{proposition}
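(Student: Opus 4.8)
The plan is to establish both implications directly from the definition of shadowing, passing back and forth between a $\delta$-pseudo-orbit and the bounded forcing sequence it determines. The single non-elementary ingredient I would use is the open mapping theorem, which is what upgrades bare surjectivity of $T_\infty$ into the quantitative norm control the shadowing definition demands; morally this is the same content as the quantitative surjectivity of the preceding proposition (a solution $\{y_n\}$ with $\sup_n|y_n|\le K\sup_n|z_n|$), and the point is to identify that condition with plain surjectivity of $T_\infty$.

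For the forward implication I would assume $T$ has the shadowing property and fix the $\delta>0$ attached to $\varepsilon=1$. Given an arbitrary $\{z_n\}\in l^\infty(\BB)$, I would first rescale so that $|\{z_n\}|_\infty\le\delta$; then, starting from any base point (say $x_0=0$) and setting $x_{n+1}=Tx_n+z_n$ going forward and $x_n=T^{-1}(x_{n+1}-z_n)$ going backward, I obtain a genuine $\delta$-pseudo-orbit, since $|x_{n+1}-Tx_n|=|z_n|\le\delta$. Note this pseudo-orbit need not be bounded, which is harmless because shadowing applies to every pseudo-orbit. Shadowing then yields $x$ with $|T^nx-x_n|<1$ for all $n$; putting $y_n:=x_n-T^nx$ gives $\{y_n\}\in l^\infty(\BB)$ with $|y_n|<1$, and a one-line telescoping computation shows $y_{n+1}-Ty_n=x_{n+1}-Tx_n=z_n$, i.e. $T_\infty(\{y_n\})=\{z_n\}$. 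Undoing the rescaling by linearity of $T_\infty$ gives surjectivity.

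For the converse I would assume $T_\infty$ is surjective. Since it is a bounded linear operator of the Banach space $l^\infty(\BB)$ to itself, the open mapping theorem supplies $r>0$ with $B(0,r)\subset T_\infty(B(0,1))$, and hence $B(0,\delta)\subset T_\infty(B(0,\delta/r))$ for every $\delta$. Given a $\delta$-pseudo-orbit $\{x_n\}$, I set $z_n:=x_{n+1}-Tx_n$, so that $\{z_n\}\in B(0,\delta)$, and choose $\{y_n\}$ with $T_\infty(\{y_n\})=\{z_n\}$ and $|\{y_n\}|_\infty\le\delta/r$. Then $x_{n+1}-y_{n+1}=T(x_n-y_n)$, so $x_n-y_n=T^n(x_0-y_0)$, and the point $p:=x_0-y_0$ satisfies $|x_n-T^np|=|y_n|\le\delta/r$ for all $n$. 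Taking $\delta\le r\varepsilon$ makes $p$ an $\varepsilon$-shadowing point; reading off $K=1/r$ simultaneously delivers the Lipschitz shadowing property.

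The hard part --- indeed the only real content --- will be the converse: surjectivity alone guarantees only \emph{some} bounded corrector $\{y_n\}$, whereas shadowing needs its size bounded uniformly in terms of $\delta$. The open mapping theorem is exactly what furnishes the uniform estimate $\delta/r$, after which the recursion $x_n-y_n=T^n(x_0-y_0)$ forces the shadowing point. Everything else --- the telescoping identity and the harmless unboundedness of the constructed pseudo-orbit --- is routine.
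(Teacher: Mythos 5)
Your proposal is correct and follows essentially the same route as the paper's proof: the forward direction feeds the bounded sequence $\{z_n\}$ into a $\delta$-pseudo-orbit and reads off the corrector $y_n = x_n - T^n x$, and the converse uses the open mapping theorem to turn surjectivity of $T_\infty$ into the uniform bound $|\{y_n\}|_\infty \le \delta/r$, yielding the shadowing point $x_0 - y_0$. Your explicit remarks on rescaling, the harmless unboundedness of the constructed pseudo-orbit, and the resulting Lipschitz constant $K = 1/r$ are minor clarifications of the same argument.
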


\begin{corollary}Shadowing and unique shadowing are open properties.
\end{corollary}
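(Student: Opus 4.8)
The plan is to reduce both openness statements to standard perturbation facts about the induced operator $T_{\infty}$ on $l^{\infty}(\BB)$, using the preceding proposition together with the key bound $|T_{\infty} - S_{\infty}| \leq |T-S|$ recorded just above it. First I would note that $L_{aut}(\BB)$ is open in $L(\BB)$, so every $S$ sufficiently close to $T$ is again an automorphism and $S_{\infty}$ is defined; this lets me work entirely inside $L(l^{\infty}(\BB))$.

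Next I would record the two characterizations. By the proposition, $T$ has the shadowing property if and only if $T_{\infty}$ is surjective. For unique shadowing I would argue equivalence with $T_{\infty}$ being \emph{bijective}: surjectivity supplies the existence (shadowing) part, while uniqueness of the shadowing point is precisely expansivity. Indeed, if $y$ and $y'$ both $\varepsilon$-shadow the same pseudo-orbit, then the orbit of $y-y'$ stays bounded by $2\varepsilon$, so demanding uniqueness for all small $\varepsilon$ forces the absence of nonzero bounded orbits; conversely a nonzero bounded orbit can be rescaled to stay within $\varepsilon$ and thereby destroy uniqueness. Since $\ker T_{\infty}$ is exactly the set of bounded orbits, this says $T_{\infty}$ is injective. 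Hence unique shadowing holds if and only if $T_{\infty}$ is injective and surjective, i.e. an isomorphism of $l^{\infty}(\BB)$ by the bounded inverse theorem.

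Given these reductions, the corollary follows from two openness facts in $L(l^{\infty}(\BB))$: the set of invertible operators is open, and the set of surjective operators is open. Combined with $|T_{\infty} - S_{\infty}| \leq |T-S|$, whenever $|T-S|$ is small enough $S_{\infty}$ inherits invertibility (resp. surjectivity), so $S$ inherits unique shadowing (resp. shadowing). Openness of invertibility is the classical Neumann series argument. For surjectivity I would invoke the open mapping theorem: since $T_{\infty}$ is surjective it is open, so there is $c>0$ such that every $w$ admits $v$ with $T_{\infty}v = w$ and $|v|_{\infty} \leq c\,|w|_{\infty}$. For $S_{\infty}$ with $|S_{\infty}-T_{\infty}|<1/c$ I would solve $S_{\infty}v=w$ by successive approximation: writing $S_{\infty} = T_{\infty} - (T_{\infty}-S_{\infty})$, solve against $T_{\infty}$ at each stage and control the residuals by a geometric series of ratio $q = c\,|T_{\infty}-S_{\infty}| < 1$; the series converges in $l^{\infty}(\BB)$ to a preimage, proving $S_{\infty}$ surjective. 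Since $|T_{\infty}-S_{\infty}|\le|T-S|$, the threshold $|T-S|<1/c$ suffices.

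I expect the only genuine content to lie in this last step. Openness of invertibility is routine, but openness of \emph{mere} surjectivity (without injectivity) really needs the quantitative form of the open mapping theorem and the iteration to keep preimages bounded; this is where the constant $c$, and hence the admissible perturbation size, is produced. A secondary point requiring care is the uniqueness half of the unique-shadowing characterization, namely checking that unique shadowing is genuinely \emph{equivalent} to injectivity of $T_{\infty}$ rather than merely implied by it.
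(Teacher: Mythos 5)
Your proposal is correct and follows essentially the same route as the paper, whose entire proof is the one-line observation that linear surjections and automorphisms are open in the operator norm, applied to $T_{\infty}$ via the characterizations (shadowing $\Leftrightarrow$ $T_{\infty}$ surjective, unique shadowing $\Leftrightarrow$ $T_{\infty}$ bijective) and the bound $|T_{\infty}-S_{\infty}|\leq|T-S|$. You merely supply the details the paper leaves implicit (the quantitative open mapping theorem plus successive approximation for openness of surjectivity, and the equivalence of unique shadowing with injectivity of $T_{\infty}$ via expansivity), all of which are consistent with the paper's surrounding results.
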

\begin{proof} Linear surjections and automorphisms are open.
\end{proof}

The next proposition is well known when $E$ is complemented in $\BB$. 

\begin{proposition}Suppose $T \in L_{aut}(\BB)$ has the shadowing property. If $E \subset \BB$ is a closed $T$-invariant subspace, then the induced map $\hat{T}$ on the quotient space $\BB / E$ has the shadowing property. 
\end{proposition}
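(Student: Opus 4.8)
The plan is to use the surjectivity criterion established just above: a linear automorphism has the shadowing property if and only if the associated operator on $l^{\infty}$ is surjective. Let $\pi\colon\BB\to\BB/E$ denote the quotient map; since $E$ is closed and $T$-invariant, $\hat T$ is the induced (bounded) automorphism of $\BB/E$ and $\pi\circ T=\hat T\circ\pi$. Applying $\pi$ coordinatewise yields a bounded linear map $\pi_{\infty}\colon l^{\infty}(\BB)\to l^{\infty}(\BB/E)$, $\pi_{\infty}(\{x_k\})=\{\pi(x_k)\}$, which is well defined because $|\pi(x_k)|\le|x_k|$. A direct computation gives the intertwining relation $\pi_{\infty}\circ T_{\infty}=\hat T_{\infty}\circ\pi_{\infty}$, where $(\hat T_{\infty}\{\hat x_k\})_n=\hat x_{n+1}-\hat T\hat x_n$: indeed $\pi(x_{n+1}-Tx_n)=\pi(x_{n+1})-\hat T\,\pi(x_n)$ for every $n$.

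Given that $T$ has the shadowing property, $T_{\infty}$ is surjective by the preceding proposition, and I will deduce from this that $\hat T_{\infty}$ is surjective. The key point is that $\pi_{\infty}$ is itself surjective. Fix $\{\hat z_n\}\in l^{\infty}(\BB/E)$. By the definition of the quotient norm, for each $n$ one can choose a lift $z_n\in\BB$ with $\pi(z_n)=\hat z_n$ and $|z_n|\le 2|\hat z_n|$ (taking $z_n=0$ when $\hat z_n=0$). Then $\sup_n|z_n|\le 2\sup_n|\hat z_n|<\infty$, so $\{z_n\}\in l^{\infty}(\BB)$ and $\pi_{\infty}(\{z_n\})=\{\hat z_n\}$; hence $\pi_{\infty}$ is onto. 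Combining with the surjectivity of $T_{\infty}$, the composite $\pi_{\infty}\circ T_{\infty}=\hat T_{\infty}\circ\pi_{\infty}$ is onto, and therefore $\hat T_{\infty}$ is onto. By the proposition, $\hat T$ has the shadowing property.

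The only real obstacle is the bounded lifting step, i.e. the surjectivity of $\pi_{\infty}$; this is precisely where the assumption that $E$ is complemented is circumvented. When $E$ is complemented one simply lifts through a fixed bounded projection, but in the general case the quotient-norm lifting with the uniform factor $2$ achieves the same effect coordinate by coordinate while keeping the supremum finite, which is all that is required. Equivalently, one may run the argument through the sequence characterization of Pilyugin: lift $\{\hat z_n\}$ to $\{z_n\}$ with $\sup_n|z_n|\le 2\sup_n|\hat z_n|$, solve $y_{n+1}=Ty_n+z_n$ upstairs with $\sup_n|y_n|\le K\sup_n|z_n|$, and project to obtain $\hat y_{n+1}=\hat T\hat y_n+\hat z_n$ with $\sup_n|\hat y_n|\le 2K\sup_n|\hat z_n|$, exhibiting the shadowing property for $\hat T$ with constant $2K$.
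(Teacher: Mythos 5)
Your proof is correct and follows essentially the same route as the paper: both pass to the induced operators on $l^{\infty}$, use the intertwining relation $\hat T_{\infty}\circ\pi_{\infty}=\pi_{\infty}\circ T_{\infty}$, and deduce surjectivity of $\hat T_{\infty}$ from that of $T_{\infty}$. The only difference is that you make explicit the surjectivity of $\pi_{\infty}$ via the bounded coordinatewise lifting, a point the paper's proof leaves implicit but which is indeed needed for the conclusion.
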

\begin{proof}Denote by $\hat{E}$ the quotient $\BB / E$ and by $p: \BB \rightarrow \hat{E}$ the quotient map. Then $\hat{T}$ is a linear factor of $T$ satisfying $\hat{T}p = pT$. If $p': l^{\infty}(E) \rightarrow l^{\infty}(\hat{E})$ is defined coordinate-wise by $p'(\{x_n\})_k = p(x_k)$, then $\hat{T}_{\infty}p' = p'T_{\infty}$. Since $T$ has shadowing $T_{\infty}$ is surjective, hence $\hat{T}_{\infty}$ is also surjective. 
\end{proof}

\begin{theorem}
\label{thm.shadowing}
Suppose that $T \in L_{aut}(\BB)$ has the shadowing property. Then the following are equivalent 
\begin{enumerate}
\item $T$ uniformly expansive
\item $T$ expansive
\item $T$ has the unique shadowing property
\item $T$ is hyperbolic

\end{enumerate}
\end{theorem}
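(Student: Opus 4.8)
The plan is to prove the four conditions equivalent by combining the standard implications among expansivity, unique shadowing and hyperbolicity with the spectral criterion encoded by $T_\infty$. Since $T$ is assumed to have the shadowing property, the preceding proposition gives that $T_\infty$ is surjective, and since $\ker T_\infty$ is exactly the space of bounded $T$-orbits, $T_\infty$ is injective precisely when $T$ is expansive. Thus the whole argument hinges on the chain ``$T$ expansive $\Longleftrightarrow$ $T_\infty$ bijective $\Longleftrightarrow$ $T$ hyperbolic'', the last step being the substantial one. Concretely I would establish the cycle $(4)\Rightarrow(1)\Rightarrow(2)\Rightarrow(4)$ and attach $(2)\Leftrightarrow(3)$.

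First I would dispose of the routine implications. For $(4)\Rightarrow(1)$, writing a unit vector as $x=x^s+x^u$ with respect to the invariant splitting $\BB=E^s\oplus E^u$, one summand has norm bounded below by a fixed constant; the uniform contraction of $T$ on $E^s$ and of $T^{-1}$ on $E^u$ then forces $|T^nx|\geq 2$ or $|T^{-n}x|\geq 2$ for a single $n$ independent of $x$, which is uniform expansivity. The implication $(1)\Rightarrow(2)$ is immediate from the definitions. For $(2)\Leftrightarrow(3)$ I would use shadowing for existence and expansivity for uniqueness: if $T$ is expansive with constant $\varepsilon(T)$, then two points $\varepsilon$-shadowing the same pseudo-orbit with $2\varepsilon\leq\varepsilon(T)$ must coincide, giving unique shadowing; conversely, if $T$ is not expansive it has a nonzero bounded orbit $\{T^nw\}$ which, after scaling, is as small as desired, so both $0$ and $w$ would $\varepsilon$-shadow the zero orbit, contradicting uniqueness.

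The heart of the proof is $(2)\Rightarrow(4)$. Expansivity makes $T_\infty$ injective and shadowing makes it surjective, so $T_\infty$ is a bounded bijection of $l^\infty(\BB)$, hence bounded below by the open mapping theorem. I would show this forces $\sigma(T)$ to be disjoint from the unit circle by excluding any $\lambda\in\sigma(T)$ with $|\lambda|=1$. If $\lambda\in\sigma_a(T)$, choose unit vectors $x_k$ with $|Tx_k-\lambda x_k|\to 0$; the sequences $y^{(k)}_n=\lambda^nx_k$ have unit $l^\infty$-norm while $|T_\infty y^{(k)}|_\infty=|Tx_k-\lambda x_k|\to 0$, contradicting boundedness below. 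If instead $\lambda\notin\sigma_a(T)$, then $T-\lambda I$ has non-dense range, so some nonzero $\phi\in\BB^*$ satisfies $T^*\phi=\lambda\phi$; using a Banach limit I would define $\Phi(\{y_n\})=\operatorname{LIM}_n\lambda^{-n}\phi(y_n)$, and the shift-invariance of $\operatorname{LIM}$ together with $\phi(Ty_n)=\lambda\phi(y_n)$ gives $\Phi\circ T_\infty=0$ while $\Phi\neq 0$ (test on $y_n=\lambda^nx_0$ with $\phi(x_0)\neq 0$), contradicting surjectivity. Hence $\sigma(T)\cap\{|z|=1\}=\varnothing$, and the Riesz spectral projection over the part of $\sigma(T)$ inside the unit disc yields the closed complementary invariant splitting realizing hyperbolicity, closing the cycle.

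The main obstacle I anticipate is exactly the step ``$T_\infty$ bijective $\Rightarrow\sigma(T)\cap\{|z|=1\}=\varnothing$''. The approximate-point-spectrum case is straightforward, but spectral values on the circle that are not approximate eigenvalues—the residual part, where the circle may lie in the interior of $\sigma(T)$—cannot be detected through failure of injectivity and must be handled through surjectivity via the adjoint; obtaining a genuine obstruction there is what requires the Banach-limit functional rather than a naive summation, which would not converge on $l^\infty(\BB)$. Verifying that the Riesz projection produces closed complementary subspaces on which $T$ is a genuine uniform contraction and expansion after renorming is routine but should be stated.
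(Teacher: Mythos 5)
Your proof is correct, but the hard implication is handled by a genuinely different mechanism than the paper's. The paper closes the cycle as $(1\Leftrightarrow 2)$ (citing [BCDMP] for the nontrivial direction that expansivity plus shadowing gives uniform expansivity), then $(2\Rightarrow 3\Rightarrow 4\Rightarrow 1)$; for $(3\Rightarrow 4)$ it invokes Hedlund's characterization of uniform expansivity as $\sigma_a(T)\cap S^1=\varnothing$ and then rules out residual spectrum on the circle by a dynamical argument: an eigenvector $f$ of $T^*$ must vanish on every orbit accumulating at $0$, and the solution sequence $y_{n+1}=Ty_n+z_n$ furnished by the shadowing (Pilyugin) characterization is used to show that the closed subspace generated by such orbits is all of $\BB$, forcing $f=0$. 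You instead orient the cycle as $(4\Rightarrow 1\Rightarrow 2\Rightarrow 4)$ with $(2\Leftrightarrow 3)$ attached, which lets you bypass both the [BCDMP] implication $(2\Rightarrow 1)$ and Hedlund's theorem: bijectivity of $T_\infty$ plus the open mapping theorem kills $\sigma_a(T)\cap S^1$ directly via the test sequences $\lambda^n x_k$, and the residual spectrum is excluded by the Banach-limit functional $\Phi(\{y_n\})=\operatorname{LIM}_n\lambda^{-n}\phi(y_n)$ annihilating the range of $T_\infty$ — which is essentially the adjoint-eigenvector obstruction the paper also exploits, but packaged as a failure of surjectivity of $T_\infty$ rather than as a dynamical statement about orbits. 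Your version is more self-contained and uniformly functional-analytic (everything lives in $l^\infty(\BB)$, at the cost of invoking a shift-invariant mean on $\ZZ$ and, implicitly, complexification if $\BB$ is real); the paper's version stays closer to the dynamics and leans on cited results. Both approaches converge at the end on the Riesz projection over the part of $\sigma(T)$ inside the unit disc to produce the hyperbolic splitting.
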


\begin{proof}
$(1 \Leftrightarrow 2)$ Is proven in  \cite{BCDMP}. Without the assumption of shadowing, expansivity does not imply uniform expansivity. 

$(2 \Rightarrow 3)$ If $\{x_n\}$ is a $\delta$-pseudo-orbit shadowed by $y_1$ and $y_2$, then $|T^ny_1 - T^ny_2| = |T^n(y_1 - y_2)| \leq \varepsilon$ for all $n \in \mathbb{Z}$. Since $y_1 - y_2$ has bounded orbit, it follows that $y_1 - y_2 = 0$, so $T$  has unique shadowing. 

$(3 \Rightarrow 4)$ It is known that $T$ is uniformly expansive if and only if  $\sigma_a (T) \cap S^1 = \varnothing$ [H]. Since 
$\sigma (T) = \sigma_a(T) \cup \sigma_r (T)$ it remains to show that $\sigma_r(T) \cap S^1 \neq \varnothing$. By multiplying $T$ by a suitable $\lambda \in S^1$, since $\sigma_r(T) \subset \sigma_p(T^*)$, it suffices to show that $T^*$ doesn't have $1$ as eigenvalue. Suppose $T^*(f) = fT$ for $f \in E^*$. We show that $f=0$. Note that if $\{T^nx\}_{n \in \mathbb{Z}}$ has zero as an accumulation point, then $f$ vanishes along the orbit of $f$. Let $X$ be the closed subspace generated by such points. We may assume without loss of generality that $T^nx \rightarrow \infty$ as $n \rightarrow \infty$. Consider the bounded sequence $z_n = x$ when $n = 0$ and $z_n = 0$ otherwise. By proposition $1$, there exists $y_n$ such that 
$$y_{n+1} = Ty_n + z_n$$ 
and $|y_n| \leq K|x|$. We see that $y_n = T^ny_0 + T^nx$ when $n \geq 0$ and $y_n = T^ny_0$ when $n < 0$. Notice that any point $z$ with a forward or backward bounded orbit belongs to $X$ by uniform expansivity: without loss of generality suppose $|T^{-n}z| \rightarrow \infty$. If $N$ is chosen so that $|T^{-N}z| \geq 2 |z|$, it follows by induction that $|T^{kN}z| \leq 2^{-k}|z|$, and the right hand side tends to zero as $k \rightarrow \infty$. Thus $x+y_0 \in X$ and $y_0 \in X$. Since $X$ is a subspace $X = E$ and $f=0$. 
$\\ (4 \Rightarrow 1)$ Holds in the absence of shadowing. The converse does not. 
\end{proof}

\begin{remark}For a linear map of a finite dimensional space, the four conditions above are equivalent to the shadowing property.
\end{remark}

\begin{corollary} Any shifted hyperbolic linear operator   has the shadowing property, but not the unique shadowing property.
\end{corollary}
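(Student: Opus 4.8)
The plan is to treat the two assertions separately, reducing each to a result already in place. For the shadowing property there is essentially nothing to do: a shifted hyperbolic operator is by definition generalized hyperbolic, so (Lipschitz) shadowing is immediate from item \ref{item sh} of theorem \ref{bounded-transitive}, or equivalently from \cite{BCDMP}. The whole content of the corollary therefore lies in showing that unique shadowing fails.

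For that, I would invoke theorem \ref{thm.shadowing}. Since $T$ has the shadowing property, the four conditions listed there are equivalent; in particular $T$ has the unique shadowing property if and only if $T$ is expansive, and (as recalled just before the definition of $l^\infty(\BB)$) expansivity for a linear automorphism is equivalent to the absence of non-zero bounded orbits. So it suffices to exhibit a single non-zero vector whose full bi-infinite orbit stays bounded. The natural candidate is furnished by the defining feature of the shifted case, namely the transition subspace $E_0 = T^{-1}(E^+)\cap E^-$: choose $v \in E_0$ with $v \neq 0$, which is possible precisely because $T$ is shifted hyperbolic. Then $v \in E^-$, and using $T^{-1}(E^-)\subset E^-$ together with the uniform contraction of $T^{-1}|_{E^-}$ (rate $\la<1$ after the renorming fixed in section \ref{definitions}) gives $|T^{-n}v| \le \la^{n}|v| \to 0$ as $n \to +\infty$. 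On the other hand $Tv \in E^+$, so $T(E^+)\subset E^+$ and the contraction of $T|_{E^+}$ yield $|T^{m}v| = |T^{m-1}(Tv)| \le \la^{m-1}|Tv| \to 0$ as $m \to +\infty$. Hence $\sup_{n\in\ZZ}|T^n v| < \infty$ with $v \neq 0$, so $T$ is not expansive and therefore does not have unique shadowing.

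There is no real obstacle here — the crux is simply recognizing that the transition vector, whose forward image lies in $E^+$ and which itself lies in $E^-$, has \emph{both} of its orbit tails decaying, so that $E_0$ consists entirely of non-zero bounded orbits. The only point requiring care is keeping the two contraction estimates on the correct sides (backward on $E^-$, forward on $E^+$), which is exactly where the hypothesis $v \in E^-$ and $Tv \in E^+$ enters. As an alternative to exhibiting the orbit explicitly, one could instead cite theorem \ref{thm SH}: it gives that $\overline{B(T)}$ is infinite dimensional, so by item \ref{item BN} the set $\cup_{N>0}B_N(T)$ of bounded orbits is non-trivial, again forcing non-expansivity; but the transition vector makes the argument self-contained.
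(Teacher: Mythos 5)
Your argument is correct and is essentially the one the paper intends: the corollary is stated without proof as an immediate consequence of theorem \ref{thm.shadowing}, and the paper's own justification (in the proof of theorem \ref{thm decomposition}, ``shifted hyperbolic operators have non-zero bounded orbits, hence cannot be hyperbolic'') rests on exactly the bounded orbit of the transition vector that you exhibit. Your write-up just makes explicit the two contraction estimates that the paper leaves implicit.
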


\begin{corollary} \label{bounded sequence} Suppose that $T$ has the shadowing property, and that $T$ is not hyperbolic. Then for every $\varepsilon > 0$, there is an $x$ such that 
$$1 - \varepsilon < |T^nx| < 1 + \varepsilon$$
for every $n \in \mathbb{Z}$. 
\end{corollary}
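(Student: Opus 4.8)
The plan is to produce the required orbit by \emph{shadowing a constant-norm pseudo-orbit} built from an approximate eigenvector lying on the unit circle. The starting observation is that the hypotheses put us exactly in the non-expansive regime. Since $T$ has the shadowing property and is not hyperbolic, Theorem \ref{thm.shadowing} tells us that the four equivalent conditions all fail; in particular $T$ is \emph{not} uniformly expansive. As recorded in the proof of that theorem, uniform expansivity is equivalent to $\sigma_a(T)\cap S^1=\varnothing$, so failure of uniform expansivity yields an approximate eigenvalue $\lambda\in\sigma_a(T)$ with $|\lambda|=1$. This $\lambda$ is the whole point: the modulus-one condition is what will make the construction uniform in $n$.

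Next I would fix the Lipschitz shadowing constant explicitly. From the proof of the proposition characterizing shadowing by surjectivity of $T_\infty$, the open mapping theorem gives $r>0$ with $B(0,r)\subset T_\infty(B(0,1))$, so every $\delta$-pseudo-orbit is $(\delta/r)$-shadowed; write $K=1/r$. Now, given $\varepsilon>0$, choose $\eta<\varepsilon/K$ and, using that $\lambda\in\sigma_a(T)$, pick a unit vector $u\in S_\BB$ with $|Tu-\lambda u|<\eta$. Define the sequence $x_n=\lambda^n u$. Because $|\lambda|=1$ we have $|x_n|=1$ for every $n\in\ZZ$, and moreover $|x_{n+1}-Tx_n|=|\lambda|^n\,|\lambda u-Tu|<\eta$, so $\{x_n\}$ is an $\eta$-pseudo-orbit of constant norm $1$. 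Shadowing it by a point $x$ gives $|T^n x-x_n|<K\eta<\varepsilon$ for all $n$, and the reverse triangle inequality $\bigl|\,|T^n x|-|x_n|\,\bigr|\le |T^n x-x_n|$ then forces $1-\varepsilon<|T^n x|<1+\varepsilon$ for all $n\in\ZZ$, as desired.

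The step I expect to carry the real weight is the passage from ``approximate eigenvalue'' to ``uniformly controlled orbit'': a single approximate eigenvector $u$ does \emph{not} by itself have a uniformly bounded orbit, since $|T^j(Tu-\lambda u)|$ may grow with $j$, so one cannot simply iterate the approximate eigenvalue relation. What rescues the argument is that the constant-norm sequence $\lambda^n u$ is itself a genuine $\eta$-pseudo-orbit with error uniform in $n$ (again thanks to $|\lambda|=1$), and shadowing then upgrades it to a true orbit staying in the thin annulus. One subtlety to address is the scalar field: the formula $x_n=\lambda^n u$ is immediate when $\BB$ is complex, giving $|x_n|=1$ directly; when $\BB$ is real one should first complexify $T$ to locate $\lambda$ and the approximate eigenvector, or equivalently replace $\lambda^n u$ by the real two-dimensional rotational pseudo-orbit spanned by the real and imaginary parts, before applying the (real) shadowing property.
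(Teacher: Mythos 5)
Your proof is correct and follows essentially the same route as the paper: extract $\lambda\in\sigma_a(T)\cap S^1$ from the failure of uniform expansivity, form the constant-norm pseudo-orbit $\lambda^n u$ from an approximate eigenvector, and shadow it. In fact your write-up is slightly more careful than the paper's, which stops after verifying the pseudo-orbit condition and leaves the final shadowing step (and the choice of $\delta$ relative to $\varepsilon$) implicit.
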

\begin{proof}This was used implicitly in [BCDMP]. We will use it several times in the sections that follow. Let $\lambda \in \sigma_a (T) \cap S^1$. Given $\delta > 0$, there is an $x \in X$ such that $|x| = 1$ and  $|Tx - \lambda x|< \delta.$ 
Define $\{y_n\}_{n \in \mathbb{Z}}$ by 
$y_n = \lambda^{|n|} x.$
Then $\{y_n\}$ is a $\delta$-pseudo-orbit since 
$|Ty_n - y_{n+1}| = |\lambda^n||Tx-\lambda x| = |Tx - \lambda x | < \delta$
and $|y_n| = 1$ since $\lambda \in S^1$ by assumption. 
\end{proof}

\subsection{Unbounded set of generalized hyperbolic operators}

Here we prove the last item of theorem \ref{bounded-transitive}.

\noindent{\em Proof of item \ref{item ub} of theorem \ref{bounded-transitive}}: The result is obvious for the case that $T$ is hyperbolic; in that case, since any vector in $E^-$ belong to $UB^+(T)$ and any vector in $E^+$ belongs to $UB^-(T)$ and so any non null vector belongs to $UB^+(T)\cap UB^-(T)$.

So, it remains to consider that $T$ is not hyperbolic, i.e., there is a vector $e\in E^-$ such that $T(e)\in E^+.$

To prove the result, it is enough to prove that for any point $z$ in $B(T)$ there exists a point $y$ in $UB^{\pm}(T)$ arbitrary close to $z$. Since the periodic points are dense, it is enough to assume that $z$ is periodic. Since $T$ is linear, it is enough to show that there is a vector $y$ with  norm one that $|T^n(y)|\to \infty$: in fact, given any $p$ periodic, the vector $p+\epsilon.y$ is close to $p$ and $|T^n(p+\epsilon.y)|\to \infty.$  

By corollary \ref{bounded sequence}, let $x$ such that $1-\eps<|T^n(x)|< 1+\eps$ for any $n$.  Let $(x_n)$ be the sequences defined inductively in the following way: $x_0=x$, $x_{n+1}=T(x_n)+\delta. \frac{T(x_n)}{|T(x_n)|}= T(x_n).(1+\frac{\delta}{|T(x_n)|})$ and observe that it is  $\delta-$pseudo-orbit. In particular,  it follows that
$$x_{n}=T^n(x_0)\cdot \displaystyle\Pi_{j=0}^{n-1}\left(1+\frac{\delta}{|T(x_j)|}\right).$$
It is claimed that $|x_n|\to \infty.$ If not, there is a subsequences $(x_{n_k})$ and $L>0$ such that $|x_{n_k}|< L$ and so $|T(x_{n_k})|<L.||T||$ and so there is $\gamma>1$ such that  $1+\frac{\delta}{|T(x_j)|}>\gamma.$ Given $m$, let $N_m$ the number of positive  integers $(n_k)$ smaller that $m$  such that $|x_{n_k}|< L$; clearly, $N_m\to\infty$ as $m\to \infty.$
Therefore, $$|x_m|> |T^m(x_0)| .\gamma^{N_m},$$ and since $1-\eps<|T^m(x)|$ it follows that $|x_m|\to \infty$, a contradiction.

 Let $y_0$ such that its  orbits $L.\delta-$shadows  the sequences $(x_n)$ so $y_0$ is close to $x$ and $|T^n(y)|\to \infty $ for $n\to \pm \infty.$

\qed




\section{Decomposition of generalized hyperbolic operators:\\ Proof of theorem \ref{thm decomposition}}
\label{sec decomposition}

In the present section it is shown that generalized hyperbolic operators are either hyperbolic or shifted hyperbolic. The proof follows showing that if no vector in $E^-$ goes to $E^+$ then the splitting is invariant and so the operator is hyperbolic. Then, it is shown that for a shifted  hyperbolic operator, if there are vectors  in $E^-$ such that its  forward iterates do not have a component in $E^+,$ then there is a hyperbolic subspace.

\vskip 5pt

\noindent{\em Proof of theorem \ref{thm decomposition}:} If both $E^+$ and $E^-$ are invariant, then $T$ is hyperbolic by definition. Suppose that $E^-$ is not totally invariant. Then there exists $x \in E^-$ such that $\Pi^+Tx \neq 0$. Write $Tx = \Pi^+Tx + \Pi^-Tx$. Then $x - T^{-1}\Pi^-Tx = T^{-1}\Pi^+Tx$. The vector on the left is in $E^-$ by backward invariance of $E^-$. Its image lies in $E^+$, so it is a transition vector. A similar argument shows that if $E^+$ is not invariant, then $T$ is shifted hyperbolic. Since shifted hyperbolic operators have non-zero bounded orbits, they cannot be hyperbolic (with respect to a possibly different splitting).

Suppose that $Orb(x) \subset E^-$. Then the smallest $T$-invariant subspace, $X$, generated by $Orb(x)$ is contained in $E^-$. We claim $T|_X$ is hyperbolic. If $p,q$ are polynomials, then 
$$|(p(T) - q(T^{-1}))x| = |T^{-1}T(p(T) - q(T^{-1})x| \leq \lambda|T(p(T) - q(T^{-1}))x| $$ 
so $T|_X$ is a uniform expansion on a dense set, hence on all of $X$. A similar argument holds for $E^-$.
\qed

\section{Dynamic of  shifted hyperbolic operators: Proof of theorem \ref{thm SH}}
\label{sec.shifted}

First, we prove theorem  \ref{thm SH}, then show that every periodic point of period $k$ can be represented as a sum along the orbit, under $T^k$, of a transition vector.

\vskip 5pt 

\noindent{\em Proof of theorem \ref{thm SH}:} Let  $v \in E^+\cap T(E^-)$, then the elements of $Orb(v)$ are linearly independent. Suppose that there are integers   $n_1 < n_2 < ... < n_k$ and non-zero scalars $a_1, ..., a_k \in \mathbb{C}$ such that 
$a_1T^{n_1}v + ... + a_kT^{n_k}v = 0$; then 
$v = b_2T^{j_1}v + ... + b_{k}T^{j_k}v$
for $b_i = -\frac{a_i}{a_1}$ and  $\;j_i = n_i - n_1$. Note that $j_i \geq 1$. Apply $T^{-1}$ to both sides. The right hand side is in $E^-$, while the left hand side is in $E^+$. Therefore $v = 0$. 


The second part of the theorem follows immediately from the fact that the orbit of the transition vector is in $B(T)$ and that $B(T)$ is a subspace.

\qed

\begin{proposition}\label{prop per points}Let $T$ be a generalized hyperbolic linear operator. Then $v_k = \sum_{n \in \mathbb{Z}}T^{nk}v$ is a periodic point of period $k$, for each $v \in E^+ \cap T(E^-) - \{0\}$. 
\end{proposition}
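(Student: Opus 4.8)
The plan is to exploit the one feature that distinguishes a transition vector from an arbitrary vector: if $v \in E^+ \cap T(E^-)$, then simultaneously $v \in E^+$ and $T^{-1}v \in E^-$, so the full bi-infinite orbit $\{T^n v\}_{n\in\ZZ}$ decays geometrically to $0$ in \emph{both} time directions. First I would make this precise. For $n \geq 0$, forward invariance $T(E^+)\subset E^+$ gives $T^n v \in E^+$, and the uniform contraction estimate gives $|T^n v| \leq \la^n |v|$. For $n \leq -1$, set $w := T^{-1}v \in E^-$ and write $T^{nk}v = T^{nk+1}w = T^{-(-nk-1)}w$ with $-nk-1 \geq k-1 \geq 0$; backward invariance $T^{-1}(E^-)\subset E^-$ puts this vector in $E^-$, and the contraction of $T^{-1}|_{E^-}$ gives $|T^{nk}v| \leq \la^{-nk-1}|w|$. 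Thus along the arithmetic progression of multiples of $k$, the norms $|T^{nk}v|$ are dominated by a two-sided geometric sequence in $n$ (note $\la^k<1$).

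From this domination the series defining $v_k = \sum_{n\in\ZZ}T^{nk}v$ is absolutely convergent, and since $\BB$ is complete it converges to a well-defined vector. The periodicity is then a one-line reindexing: because $T^k \in L(\BB)$ is bounded, hence continuous, it passes through the convergent sum, and the shift $m = n+1$ is a legitimate rearrangement of an absolutely convergent series:
$$T^k v_k = T^k \sum_{n\in\ZZ} T^{nk}v = \sum_{n\in\ZZ} T^{(n+1)k}v = \sum_{m\in\ZZ} T^{mk}v = v_k.$$
Hence $v_k$ is fixed by $T^k$, i.e. periodic of period $k$.

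The only genuine obstacle is the convergence step; it is precisely here that the shifted hyperbolic structure is used, since a generic vector's orbit decays in at most one direction and the two-sided sum would diverge. Everything after convergence is formal. I expect the non-triviality of $v_k$ (so that it is a true periodic point and not the trivial fixed point $0$) to deserve a short remark. Splitting $v_k = \Pi^+ v_k + \Pi^- v_k$, the decay estimates above show $\Pi^+ v_k = \sum_{n\geq 0}T^{nk}v \in E^+$ and $\Pi^- v_k = \sum_{n\leq -1}T^{nk}v \in E^-$. Isolating the $n=0$ term yields $(I - T^k)\Pi^+ v_k = v$, and since $\|T^k|_{E^+}\| \leq \la^k < 1$ the operator $I - T^k$ is invertible on $E^+$ by a Neumann series, so $\Pi^+ v_k = (I - T^k|_{E^+})^{-1} v \neq 0$; therefore $v_k \neq 0$.
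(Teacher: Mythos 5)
Your convergence estimate, the reindexing argument for $T^k v_k = v_k$, and the Neumann-series argument for $v_k \neq 0$ are all correct, and the last of these is a genuinely different (and arguably cleaner) route than the paper's, which instead proves a linear-independence claim: if $\sum_{n\in\ZZ}a_nT^nv = 0$ with the sum convergent, then all $a_n = 0$ (proved by projecting onto $E^+$ and $E^-$, isolating the lowest-order nonzero term, and using $E^+\cap E^- = 0$).

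However, there is a genuine gap: you prove only that $v_k$ is fixed by $T^k$ and nonzero, not that its period is exactly $k$. The paper explicitly flags this as the non-immediate part of the statement (``it is clear that $T^k(v_k)=v_k$'') and devotes the claim above to ruling out $T^j v_k = v_k$ for $0 < j < k$: if that held, then $\sum_{n}(T^{nk+j}v - T^{nk}v) = 0$ would be a nontrivial vanishing combination of the orbit, forcing $v=0$. Your ``Hence $v_k$ is fixed by $T^k$, i.e.\ periodic of period $k$'' silently conflates the two notions. The good news is that your own machinery closes the gap in one more line: if $T^j v_k = v_k$ with $0<j<k$, apply the (bounded) projection $\Pi^+$; since $T^{nk+j}v \in E^+$ for $n\geq 0$ and lies in $E^-$ for $n\leq -1$, you get $T^j(\Pi^+ v_k) = \Pi^+ v_k$ with $\Pi^+ v_k \in E^+$, and the contraction $\|T^j|_{E^+}\|\leq \la^j<1$ forces $\Pi^+ v_k = 0$, contradicting your computation $\Pi^+ v_k = (I - T^k|_{E^+})^{-1}v \neq 0$. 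You should add this step to have a complete proof.
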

\begin{proof}
If $v \in E^+ \cap T^k(E^-)-\{0\}$, then given $n \in \ZZ$, 
$|T^{nk}v| \leq \lambda^{|n|k}|v| $ so
$\sum_{n \in \ZZ}|T^{nk}v| < \infty$
The sum converges absolutely, hence converges, since $\BB$ is Banach. We claim that $v_k$ has period $k$. This is not immediate, since the orbit of $v$ may not be a basic sequence, though it is clear that $T^k(v_k) = v_k$. 
\begin{claim} If $\sum_{n \in \mathbb{Z}}a_nT^nv = 0$, then $a_n =0$ for all $n$. 
\end{claim}
Suppose that $\sum_{n\in\mathbb{Z}}a_nT^nv = 0$. Then $\pi_+(\sum_{n\in\mathbb{Z}}a_nT^nv) = \sum_{n\geq 0}a_nT^nv = 0$ and $\pi_-(\sum_{n\in\mathbb{Z}}a_nT^nv) = \sum_{n<0}a_nT^nv = 0$. Suppose without loss of generality that $l \geq 0$ is the minimal $l$ so that $a_l \neq 0$. Applying $T^{-(l+1)}$, we have $-T^{-1}v = \sum_{n\geq l+2}a_nT^n$. The left hand side is in $E^-$, while the right hand side is in $E^+$, therefore $v = 0$.
$\newline \indent$ From the claim all the $v_k$ are non-zero. If $T^j(v_k) = v_k$, for some $0 < j < k$, then the difference is a sum of the above form, hence $v = 0$. 
\end{proof}

\begin{proposition}Let $T$ be a generalized hyperbolic linear operator. Then every periodic point $x$ with period $k$ has representation $x = \sum_{n \in \mathbb{Z}}T^{nk}v$, for some $v \in T^k(E^-) \cap E^+$. 
\end{proposition}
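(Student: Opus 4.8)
The plan is to reduce to the case of a fixed point and then recover the transition vector explicitly from the hyperbolic splitting. Set $S = T^k$. Since $T(E^+) \subset E^+$ and $T^{-1}(E^-) \subset E^-$, one has $S(E^+) \subset E^+$ and $S^{-1}(E^-) \subset E^-$, and moreover $S|_{E^+}$ and $S^{-1}|_{E^-}$ are uniform contractions with rate $\lambda^k$. Thus $S$ is again generalized hyperbolic for the same splitting $\BB = E^- \oplus E^+$, the periodic point $x$ of period $k$ is a fixed point of $S$, and the target set $T^k(E^-) \cap E^+ = S(E^-) \cap E^+$ is exactly the set of transition vectors of $S$. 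Hence it suffices to treat the case $k = 1$, i.e. $Tx = x$, and to produce $v \in T(E^-) \cap E^+$ with $x = \sum_{n\in\mathbb{Z}} T^n v$.

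Write $x = x^- + x^+$ with $x^{\pm} = \Pi^{\pm}(x)$. I would take as candidate the vector $v := \Pi^+ T(x^-)$, and first record two equivalent formulas for it. Applying $\Pi^-$ and $\Pi^+$ to the fixed-point relation $T(x^-) + T(x^+) = x^- + x^+$ and using $T(x^+) \in E^+$ yields $\Pi^- T(x^-) = x^-$ and $\Pi^+ T(x^-) = x^+ - T(x^+)$. Consequently $v = T(x^-) - x^- = x^+ - T(x^+)$, the first identity exhibiting $v$ as the $E^+$-component of an image under $T$, the second as a difference of vectors in $E^+$.

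Next I would verify that $v$ is a genuine transition vector. From $v = x^+ - T(x^+)$ and $x^+, T(x^+) \in E^+$ one gets $v \in E^+$. From $v = T(x^-) - x^-$ one computes $T^{-1}v = x^- - T^{-1}(x^-)$, and since both $x^-$ and $T^{-1}(x^-)$ lie in $E^-$ by backward invariance, $T^{-1}v \in E^-$, i.e. $v \in T(E^-)$. Hence $v \in T(E^-) \cap E^+$. The representation then follows by telescoping: using $v = x^+ - T x^+$, the partial sums $\sum_{n=0}^{N} T^n v = x^+ - T^{N+1}x^+$ converge to $x^+$ since $x^+ \in E^+$ is uniformly contracted forward; using $v = T x^- - x^-$, the partial sums $\sum_{n=1}^{M} T^{-n} v = x^- - T^{-M}x^-$ converge to $x^-$ since $x^- \in E^-$ is uniformly contracted backward. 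The bi-infinite series converges absolutely by proposition \ref{prop per points}, so $\sum_{n\in\mathbb{Z}} T^n v = x^+ + x^- = x$.

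The step I expect to be the crux is showing that the single vector $v = \Pi^+ T(x^-)$ simultaneously satisfies both the $E^+$-membership and the $T(E^-)$-membership; this is exactly where the fixed-point equation enters, through the two complementary projections, forcing the apparently distinct formulas $T(x^-)-x^-$ and $x^+-T(x^+)$ to agree. Once $v$ is identified as a transition vector, the telescoping collapses and the convergence is inherited from the preceding proposition, so the remainder is routine.
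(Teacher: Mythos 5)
Your proof is correct and follows essentially the same route as the paper: the paper also takes $v = x_+ - T^k x_+$ (equal to $T^k x_- - x_-$ via the fixed-point relation for $T^k$), checks $v \in T^k(E^-)\cap E^+$, and recovers $x_+$ and $x_-$ by the same telescoping sums. Your explicit reduction to $S=T^k$ and the identification $v=\Pi^+ T(x^-)$ are just a cleaner packaging of the identical argument.
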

\begin{proof}Suppose $T^kx = x$. Writing $x = x_+ + x_-$ and applying $T^{-k}$,
\begin{equation}T^{-k}x_+ - x_+ = x_- - T^{-k}x_-
\end{equation}
It follows that $v = x_+ - T^kx_+ \in T^k(E^-) \cap E^+$, and that the sum $\sum_{n\in\mathbb{Z}}T^{nk}v$ converges absolutely. It suffices to show that $\sum_{n\geq0}T^{nk}v$ converges to $x_+$ and $\sum_{n < 0}T^{nk}v$ to $x_-$. The first series is telescoping with partial sums $x_+ - T^{-nk}v$, which converge to $x_+$. By $(3)$, the fist term of the second series can be rewritten as $x_- -T^{-k}x_-$, which shows that it also telescoping with partial sums $x_- - T^{-nk}x_-$, which converge to $x_-$. 
\end{proof}

\begin{corollary}If $E_0$ is separable, in particular if it is finite dimensional, then $\overline{B(T)}$ is separable. 
\end{corollary}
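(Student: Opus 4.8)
The plan is to reduce everything to Theorem \ref{thm SH}, which identifies $\overline{B(T)}$ with the shifted subspace $\Sigma$, namely the smallest closed $T$-invariant subspace containing the transition subspace $E_0$. Thus it suffices to exhibit a countable dense subset of $\Sigma$ under the hypothesis that $E_0$ is separable; the finite-dimensional case is then a special instance, since finite-dimensional spaces are separable.

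First I would fix a countable dense set $D \subset E_0$ and form the countable family $\{T^n d : n \in \ZZ,\ d \in D\}$. Since each $T^n \in L(\BB)$ is continuous, $T^n(D)$ is dense in $T^n(E_0)$, so this family meets every $T^n(E_0)$ densely. Let $S$ be the linear span of the family over the countable field $\mathbb{Q} + i\mathbb{Q}$, so that $S$ is itself countable. The objective is then to prove $\overline{S} = \Sigma$, which makes $\Sigma$ separable.

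The main step is to verify that $\overline{S}$ is a closed $T$-invariant subspace containing $E_0$: once this is known, minimality of $\Sigma$ gives $\Sigma \subseteq \overline{S}$, while the reverse inclusion is immediate because every generator $T^n d$ lies in $E_0 \subseteq \Sigma$ and $\Sigma$ is a closed $T$-invariant subspace, so $\overline{S} \subseteq \Sigma$. Containment of $E_0$ in $\overline{S}$ holds because $D \subseteq S$ already approximates each vector of $E_0$. The one point requiring a little care is $T$-invariance: rather than manipulating limits directly, I would observe that $T$ (respectively $T^{-1}$) carries each generator $T^n d$ to $T^{n+1}d$ (respectively $T^{n-1}d$), hence maps $S$ into $S$, and then pass this through the closure by continuity, obtaining $T(\overline{S}) \subseteq \overline{T(S)} \subseteq \overline{S}$ and symmetrically for $T^{-1}$. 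Using the countable scalar field $\mathbb{Q}+i\mathbb{Q}$ in place of $\mathbb{C}$ costs nothing, since its density in $\mathbb{C}$ guarantees that $\overline{S}$ recovers the full complex span. This $T$-invariance check is really the only subtlety; the remaining ingredients — that continuous images of separable sets are separable, and that the closed span of a countable set is separable — are routine, so I expect no serious obstacle.
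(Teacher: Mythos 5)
Your proof is correct and follows what the paper leaves implicit: the corollary is an immediate consequence of Theorem \ref{thm SH} together with the paper's remark that the shifted subspace is the closure of polynomials in $T$ and $T^{-1}$ applied to $E_0$, i.e.\ the closed span of $\{T^n(E_0)\}_{n\in\ZZ}$, which is separable whenever $E_0$ is. Your rational-span argument and the invariance check via $T(\overline{S})\subseteq\overline{T(S)}\subseteq\overline{S}$ fill in exactly the routine details the paper omits.
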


\section{Dichotomy: Proof of theorem \ref{thm dichotomy}}

First observe that any closed invariant subspace properly contained in $\overline{B(T)}$ can not be a hyperbolic component: if $L$ is a closed invariant subspace which is not $\overline{B(T)}$ then there is a non-trivial periodic point with orbit in  the complement of $L$ and that orbit projects onto a periodic one for the quotient map and therefore it can not be hyperbolic.  

Let $\hat E:= \BB/\overline{B(T)}$ be the quotient space and let $p: \BB\to \hat E$ be the quotient map.

\begin{claim}The quotient $\hat{E}:=\BB / \overline{B(T)}$ splits as a direct sum $\hat{E}=E_1 \oplus E_2$, where $E_1:=p(E^+)$ and $E_2:=p(E^-)$. 
\end{claim}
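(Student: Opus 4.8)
The goal is to show that the quotient space $\hat E = \BB/\overline{B(T)}$ decomposes as $\hat E = E_1 \oplus E_2$ with $E_1 = p(E^+)$ and $E_2 = p(E^-)$. Since $\BB = E^- \oplus E^+$ and $p$ is surjective and linear, it is immediate that $\hat E = p(E^-) + p(E^+) = E_2 + E_1$, so the content of the claim is that this sum is direct, i.e. $E_1 \cap E_2 = \{0\}$ in $\hat E$, and that both summands are closed.

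**The plan.**
First I would establish directness. A class $[w] \in E_1 \cap E_2$ means there exist $u \in E^+$ and $v \in E^-$ with $p(u)=p(v)=[w]$, equivalently $u - v \in \overline{B(T)}$. The natural approach is to use the dynamical characterization of $\overline{B(T)}$ together with the contraction estimates from Lemma~\ref{st-ust} and the definition of generalized hyperbolicity. The key point I would exploit is that $T^{-1}|_{E^-}$ and $T|_{E^+}$ are uniform contractions: a pure $E^+$ vector has $|T^n u| \to 0$ as $n \to +\infty$ (so $u \in B(T)$ only if... ), while a pure $E^-$ vector contracts backward. I would argue that if $u - v \in \overline{B(T)}$ with $u \in E^+$, $v \in E^-$, then decomposing the bounded-orbit behavior forces $u$ and $v$ each to lie in $\overline{B(T)}$ separately, so that $[u]=[v]=0$ in $\hat E$. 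Concretely, since $\overline{B(T)}$ is $T$-invariant and $\Pi^+, \Pi^-$ interact with the splitting, I would apply forward iterates to kill the $E^-$ part (it grows or is controlled) and backward iterates to isolate components, reducing to the already-proved statement that $\overline{B(T)}$ is a subspace (item~\ref{item pp}).

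**Closedness of the summands.**
For $E_1 = p(E^+)$ to be a genuine closed subspace of $\hat E$ giving a topological direct sum, I would verify that $p|_{E^+}$ and $p|_{E^-}$ have closed range. The cleanest route is to show that the preimage $p^{-1}(E_1) = E^+ + \overline{B(T)}$ is closed in $\BB$; since $\overline{B(T)}$ coincides with the shifted subspace (Theorem~\ref{thm SH}) and is built from iterates of the transition subspace $E_0 \subset E^-$, one expects $E^+ + \overline{B(T)}$ to be closed using the complementation $\BB = E^- \oplus E^+$ and the fact that $\overline{B(T)} \cap E^+$ and $\overline{B(T)} \cap E^-$ are themselves closed and invariant. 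Then directness plus closedness yields the topological splitting.

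**Main obstacle.**
The hard part will be the directness step: proving $E_1 \cap E_2 = \{0\}$ in the quotient, because $E^+$ and $E^-$ are \emph{not} $T$-invariant (only semi-invariant), so a vector in $E^+$ can acquire an $E^-$ component under $T^{-1}$ (and conversely), which is precisely the transition phenomenon. I must rule out that some genuine overlap $u - v \in \overline{B(T)}$ with $u \in E^+ \setminus \overline{B(T)}$ arises from this mixing. The resolution should come from the asymptotic growth dichotomy: for $u \in E^+$ with $u \notin \overline{B(T)}$, forward iterates $T^n u$ stay in $E^+$ and contract, while the obstruction to $u$ being bounded must live in the backward direction — and controlling the backward orbit of $u - v$ modulo $\overline{B(T)}$, using that $\overline{B(T)}$ absorbs all bounded behavior, is where the delicate estimate lies.
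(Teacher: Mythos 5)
You have correctly reduced the claim to two points (directness of the sum $E_1+E_2$ and closedness of the summands) and you point at the right dynamical ingredients, but the step you defer as the ``main obstacle'' is the entire content of the claim, and your proposal never actually supplies it. The resolution is not a delicate estimate on the backward orbit of $u-v$ ``modulo $\overline{B(T)}$'' (which would be circular, since the quotient structure is what is being built); it is the elementary observation that the projections $\Pi^{\pm}$ associated to $\BB=E^-\oplus E^+$ map $B(T)$ into $B(T)$. Indeed, for $x\in B(T)$ all forward iterates of $\Pi^+x$ tend to zero by the contraction of $T|_{E^+}$, while its backward iterates satisfy $T^{-m}\Pi^+x=T^{-m}x-T^{-m}\Pi^-x$, where the second term tends to zero by the contraction of $T^{-1}|_{E^-}$ and the first is bounded along the subsequence $\{m_n\}$ from the definition of $B(T)$; hence $\Pi^+x\in B(T)$, and symmetrically $\Pi^-x\in B(T)$. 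By continuity of $\Pi^{\pm}$ this passes to $\overline{B(T)}=\ker p$, so $p\Pi^{\pm}$ vanish on $\ker p$ and factor through $p$, producing bounded operators $\pi_1,\pi_2$ on $\hat E$ with $\pi_1+\pi_2=I$ that are complementary projections onto $E_1$ and $E_2$. This single lemma delivers both of your points at once: directness, because $u=\Pi^+(u-v)\in\overline{B(T)}$ whenever $u-v\in\overline{B(T)}$ with $u\in E^+$, $v\in E^-$; and closedness, because the range of a bounded projection is closed, so your detour through Theorem~\ref{thm SH} to argue that $E^++\overline{B(T)}$ is closed is unnecessary (and, as stated, vague).

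Two smaller corrections. Your sentence ``apply forward iterates to kill the $E^-$ part'' has the directions reversed: forward iteration contracts the $E^+$ component and expands the $E^-$ component; it is backward iteration that kills the $E^-$ part. And reducing to the already-proved fact that $\overline{B(T)}$ is a subspace does not close the gap: being a subspace does not imply being compatible with the splitting $E^-\oplus E^+$, which is exactly what the projection lemma above establishes.
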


\begin{proof}
 
If $x \in Ker(p)=\overline{B(T)}$, let $x_n \in B(T)$ be such that $x_n \rightarrow x$. Write $x_n = \Pi^+x_n + \Pi^-x_n$ where $\Pi^+$ and $\Pi^-$ are as usual the projections over $E^-$ and $E^+$ respectively. Then $\Pi^+x_n$ and $\Pi^-x_n$ are in $B(T)$ since all forward (backward) iterates of $\Pi^+x_n \;(\Pi^-x_n)$ are bounded. It follows that $\Pi^+x$ and $\Pi^-x$ are in $\overline{B(T)}$, so $Ker(p) \subset Ker(p\Pi^+)$ and $Ker(p) \subset Ker(p\Pi^-)$. Then $p\Pi^+$ and $p\Pi^-$ factor through $p$, i.e. there are $\pi_1, \pi_2 \in L(\hat{E})$ such that $\pi_1p = p\Pi^+$ and $\pi_2p = p\Pi^-$. These maps are clearly projections.
\end{proof}

\begin{claim}$E_1$ and $E_2$ are an invariant splitting for the induced map on the quotient, $\hat{T}$.
\end{claim}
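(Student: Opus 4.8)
The plan is to show that the splitting $\hat E = E_1 \oplus E_2$ is fully invariant by verifying the four inclusions $\hat T(E_1)\subset E_1$, $\hat T^{-1}(E_2)\subset E_2$, $\hat T(E_2)\subset E_2$ and $\hat T^{-1}(E_1)\subset E_1$. Throughout I would use the intertwining relations $\hat T p = pT$ and, from the previous claim, $\pi_1 p = p\Pi^+$ and $\pi_2 p = p\Pi^-$, together with the fact that $\ker p = \overline{B(T)}$ is a $T$-invariant subspace (since $B(T)$ is $T$-invariant and $T$ is continuous, so its closure is invariant too).

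The two inclusions inherited from the one-sided invariance of the original splitting are immediate. If $v\in E^+$, then $Tv\in E^+$ because $T(E^+)\subset E^+$, so $\hat T(p(v)) = p(Tv)\in p(E^+) = E_1$, giving $\hat T(E_1)\subset E_1$. Symmetrically, $T^{-1}(E^-)\subset E^-$ yields $\hat T^{-1}(E_2)\subset E_2$. These cost nothing and hold already at the level of $T$.

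The content of the claim is the remaining two inclusions, which genuinely fail for $T$ itself but hold in the quotient because the obstruction is carried entirely by the transition subspace $E_0\subset B(T)$. For $\hat T(E_2)\subset E_2$, I would take $w\in E^-$ and write $Tw = \Pi^+Tw + \Pi^-Tw$. Exactly as in the proof of Theorem \ref{thm decomposition}, the vector $T^{-1}\Pi^+Tw = w - T^{-1}\Pi^-Tw$ lies in $E^-$ by backward invariance and maps under $T$ into $E^+$, so it is a transition vector, hence lies in $B(T)\subset\overline{B(T)} = \ker p$. Applying $T$ and using the invariance of $\overline{B(T)}$ gives $\Pi^+Tw\in\ker p$, so $p(Tw) = p(\Pi^-Tw)\in p(E^-) = E_2$, i.e. $\hat T(p(w))\in E_2$. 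A mirror computation handles $\hat T^{-1}(E_1)\subset E_1$: for $v\in E^+$, decomposing $T^{-1}v = \Pi^+T^{-1}v + \Pi^-T^{-1}v$ and applying $T$ shows $T\Pi^-T^{-1}v = v - T\Pi^+T^{-1}v\in E^+$, so $\Pi^-T^{-1}v\in E^-\cap T^{-1}(E^+) = E_0$; thus $p(\Pi^-T^{-1}v) = 0$ and $p(T^{-1}v) = p(\Pi^+T^{-1}v)\in E_1$.

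The only real obstacle is this middle step: recognizing that the cross components $\Pi^+Tw$ and $\Pi^-T^{-1}v$ are, up to one application of $T^{\pm1}$, transition vectors, and therefore vanish under $p$. Once that identification is made, full invariance of $E_1$ and $E_2$ follows formally, and equivalently the induced projections $\pi_1,\pi_2$ commute with $\hat T$. This is precisely what sets up the next step toward Theorem \ref{thm dichotomy}, since the contraction estimates for $\hat T|_{E_1}$ and $\hat T^{-1}|_{E_2}$ then descend directly from those for $T|_{E^+}$ and $T^{-1}|_{E^-}$, making $\hat T$ hyperbolic on $\hat E$.
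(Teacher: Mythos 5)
Your proof is correct and takes essentially the same approach as the paper: the key step in both is to recognize that the cross component $\Pi^+Tw$ (for $w\in E^-$) is the forward image of a transition vector, hence lies in $\overline{B(T)}=\ker p$, so it vanishes in the quotient. You simply spell out more explicitly the two inclusions that follow directly from the one-sided invariance of $E^\pm$ and the mirror computation for $E_1$, which the paper compresses into ``a similar argument.''
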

\begin{proof}

Given $x \in E^-$, write $Tx = \Pi^+Tx + \Pi^-Tx$. Applying $p$, we see that $\hat{T}px =p(Tx) = p(\Pi^-Tx) \in E_2$, by definition, since $\Pi^+Tx$ is the forward image of an element in the transition subspace. A similar argument shows that $E_1$ is invariant.

\end{proof}

The theorem follows since $p$ is an isometry provided that the quotients are non-trivial. This may also be seen less directly as a consequence of shadowing:

\begin{claim}If $E_1$ or $E_2$ are non-trivial, then $\hat{T}$ acts hyperbolically on them. 
\end{claim}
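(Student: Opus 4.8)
The plan is to use the invariant splitting $\hat E = E_1 \oplus E_2$ produced by the previous claim and to verify the two spectral conditions that constitute hyperbolicity: that the spectra of $\hat T|_{E_1}$ and of $\hat T^{-1}|_{E_2}$ both lie strictly inside the unit disc. Since the splitting is already known to be $\hat T$-invariant, by the spectral radius formula it suffices to show $r(\hat T|_{E_1}) < 1$ and $r(\hat T^{-1}|_{E_2}) < 1$. I will treat $E_1$, the case of $E_2$ being identical after replacing $T$ by $T^{-1}$ and $E^+$ by $E^-$.

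First I would record two invariance facts. The subspace $E^+ \cap \overline{B(T)}$ is closed and forward $T$-invariant, because $E^+$ is forward invariant and $\overline{B(T)}$ is totally invariant; and $\overline{B(T)}$ is invariant under the projection $\Pi^+$ onto $E^+$, as was observed in the proof of the splitting claim. Now fix $\hat v \in E_1$ and write $\hat v = p(x)$ with $x \in E^+$. Using that $\overline{B(T)}$ is totally $T$-invariant, for every $n \ge 0$
$$|\hat T^n \hat v| = \inf_{c \in \overline{B(T)}}|T^n x - c| = \inf_{b \in \overline{B(T)}}|T^n(x - b)| \le \inf_{b \in E^+ \cap \overline{B(T)}}|T^n(x-b)|.$$
For $b \in E^+ \cap \overline{B(T)}$ the vector $x - b$ lies in $E^+$, so the uniform contraction gives $|T^n(x-b)| \le \lambda^n |x-b|$; moreover, applying $\Pi^+$ to near-minimizers of the quotient norm shows $\inf_{b \in E^+ \cap \overline{B(T)}}|x - b| \le \|\Pi^+\|\,|p(x)|$. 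Combining, $|\hat T^n \hat v| \le \lambda^n \|\Pi^+\|\,|\hat v|$, whence $\|\hat T^n|_{E_1}\| \le \lambda^n \|\Pi^+\|$ and therefore $r(\hat T|_{E_1}) \le \lambda < 1$.

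The symmetric estimate on $E_2$ gives $r(\hat T^{-1}|_{E_2}) < 1$, and together with invariance of the splitting this is exactly the hyperbolicity of $\hat T$. The only delicate point is that the quotient norm on $E_1$ and the quotient norm of $E^+/(E^+ \cap \overline{B(T)})$ differ by the bounded factor $\|\Pi^+\|$, since the projections need not have norm one; this is precisely what forces me to work with the spectral radius (insensitive to the equivalent norms and to the harmless constant $\|\Pi^+\|$) rather than trying to show that $\hat T|_{E_1}$ is literally a contraction. I expect this bookkeeping with the two quotient norms to be the main obstacle; everything else reduces to the standard inequality $\|\hat A^n\| \le \|A^n\|$ for an operator induced on a quotient by an invariant subspace. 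Alternatively, one could argue indirectly: $\hat T$ inherits the shadowing property from $T$, so by Theorem \ref{thm.shadowing} it would be enough to prove that $\hat T$ has no nonzero bounded orbit; I prefer the direct spectral estimate, since ruling out bounded orbits in the quotient is itself not immediate.
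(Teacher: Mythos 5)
Your proof is correct, but it takes a genuinely different route from the paper's. The paper argues through its shadowing machinery: shadowing passes to quotients by closed invariant subspaces, so $\hat{T}|_{E_1}$ and $\hat{T}|_{E_2}$ have the shadowing property; if either were not hyperbolic, then by Theorem \ref{thm.shadowing} it would fail unique shadowing, and Corollary \ref{bounded sequence} would produce an orbit in the quotient with norms trapped near $1$, which lifts to a point $x$ of $E^+$ (resp.\ $E^-$) satisfying $\frac12 < \|pT^nx\| \leq |T^nx|$ for all $n$ --- absurd, since the forward (resp.\ backward) iterates of $x$ tend to $0$. This is exactly the ``indirect'' alternative you mention and set aside; note that the step you flag as not immediate (ruling out bounded orbits in the quotient) is disposed of by the one-line lifting estimate $\|pT^nx\| \leq |T^nx|$. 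Your direct estimate instead fleshes out the paper's own cryptic remark just before the claim (``the theorem follows since $p$ is an isometry\dots''): the substitution $c = T^nb$ using total invariance of $\overline{B(T)}$, the restriction of the infimum to $E^+\cap\overline{B(T)}$, and the comparison of the two quotient norms by applying $\Pi^+$ to near-minimizers (legitimate because the splitting claim already established $\Pi^{\pm}(\overline{B(T)})\subset\overline{B(T)}$) are all valid, and they yield the quantitative bound $r(\hat{T}|_{E_1})\leq\lambda$. What the paper's route buys is brevity given the machinery already built; what yours buys is independence from Theorem \ref{thm.shadowing} and Corollary \ref{bounded sequence}, plus an explicit spectral radius bound rather than a proof by contradiction.
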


\begin{proof}
 
Applying proposition $3$ thrice, we see that $\hat{T}|_{E_1}$ and $\hat{T}|_{E_2}$ have the shadowing property. We claim that $\hat{T}|_{E_1}$ has the unique shadowing property, hence is hyperbolic by theorem $8$. If not, by corollary $4$ there is an $x \in E^-$ such that for all $n \in \mathbb{Z}$, 
$$\frac{1}{2} < ||pT^nx|| \leq |T^nx|$$ 
where $||.||$ denotes the quotient norm. This is absurd since $T^{-n}x\rightarrow 0$ as $n\rightarrow \infty$. A similar argument shows that $T|_{E_2}$ has the unique shadowing property (recall that $T$ has shadowing if and only if $T^{-1}$ does). Since the direct sum of hyperbolic maps is hyperbolic, the claim follows. 
\end{proof}

\begin{remark}If both $E_1$ and $E_2$ are trivial then $\overline{B(T)} = E$. It could be the case that only one is trivial. For instance, take the direct sum of a transitive generalized hyperbolic map with a uniform expansion or contraction.
\end{remark}

\section{ Strong shifted hyperbolicity: Proof of theorem \ref{thm StSH}}

The strategy of the proof consists in showing that any vector $w$ in the 
subspace generated by the backward iterates of the transition subspace can be written as a convergent infinite sum of vectors that belongs to the backward iterates of the transition subspace (similarly for the forward subspace); this is done in claim \ref{claim.closed}.

If $T$ is shifted hyperbolic, by theorem \ref{thm decomposition} it holds that    $E_0=E^-\cap T^{-1}(E^+)$ is a non trivial subspace and $T^{-1}(E_0)\cap E_0=0$ (since $T(E_0)\subset E^+$ and $E_0\subset E^-$).
Let $\Pi_n$ be the orthogonal projector from $\BB$ to $E_{n}:=T^{-n}(E_0)$ (observe that all the subspaces are isomorphic).

\begin{claim} The sequence of projections $(\Pi_n)$ does not contain a Cauchy subsequence.

\end{claim}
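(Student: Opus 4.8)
The plan is to argue by contradiction, extracting from a hypothetical Cauchy subsequence a single vector of $E^-$ whose entire forward orbit remains in $E^-$, and then to invoke the second item of Theorem~\ref{thm decomposition} to produce a hyperbolic invariant subspace, contradicting the standing hypothesis of Theorem~\ref{thm StSH}.

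First I would record the elementary containments $E_n = T^{-n}(E_0) \subset E^-$ for $n \geq 0$ and $E_n \subset E^+$ for $n \leq -1$, which follow from $E_0 \subset E^-$, $T(E_0)\subset E^+$ together with the one-sided invariances $T^{-1}(E^-)\subset E^-$ and $T(E^+)\subset E^+$. Suppose now that some subsequence $(\Pi_{n_k})$ is Cauchy. Since the $n_k$ are distinct integers, either infinitely many tend to $+\infty$ or infinitely many tend to $-\infty$; passing to a further subsequence I may assume $n_k$ is monotone, and I treat the case $n_k \to +\infty$ (the other being symmetric, see below), so that every $E_{n_k}\subset E^-$. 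For orthogonal projections the operator-norm distance controls the gap between the ranges; hence $\Pi_{n_k}$ converging in norm to a limit $P=\lim_k \Pi_{n_k}$, which is again an orthogonal projection with $|P|=1$, forces the subspaces $E_{n_k}$ to converge in the gap metric to $E_\infty := \operatorname{ran}(P)$, a nonzero closed subspace contained in $E^-$.

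Next I would fix a unit vector $v_\infty \in E_\infty$ and, using $\delta(E_\infty, E_{n_k}) \leq |P-\Pi_{n_k}| \to 0$, choose unit vectors $v_k \in E_{n_k}$ with $v_k \to v_\infty$. The key point is that the forward orbit of $v_\infty$ is trapped in $E^-$: for each fixed $j \geq 0$ and every $k$ large enough that $n_k \geq j$, one has $T^j v_k \in E_{n_k - j}\subset E^-$ since $n_k - j \geq 0$; letting $k\to\infty$ and using continuity of $T^j$ together with the closedness of $E^-$ yields $T^j v_\infty \in E^-$. Thus $v_\infty \in E^-\setminus\{0\}$ satisfies $\Pi^+ T^j v_\infty = 0$ for all $j\geq 0$, i.e. its forward orbit has no component in $E^+$. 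By the second item of Theorem~\ref{thm decomposition}, $T$ then possesses a hyperbolic invariant subspace, contradicting the hypothesis of Theorem~\ref{thm StSH}. The case $n_k\to -\infty$ is handled identically after replacing $T$ by $T^{-1}$, which is generalized hyperbolic with the roles of $E^+$ and $E^-$ interchanged, and for which the containments $E_{n_k}\subset E^+$ produce a nonzero vector of $E^+$ whose backward orbit avoids $E^-$.

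The delicate step, and the one I would write out most carefully, is the passage from finite-time to infinite-time containment: each $E_{n_k}$ only guarantees that forward iterates stay in $E^-$ up to time $n_k$, so the conclusion for the limit vector hinges on $n_k\to\infty$, which widens the containment window to all of $\mathbb{N}$. I would also take care to justify the dictionary between norm convergence of orthogonal projections and gap convergence of their ranges (and the nonvanishing of the limit projection), since this is precisely what upgrades a Cauchy sequence of operators into a convergent sequence of subspaces from which a genuine limiting vector can be selected.
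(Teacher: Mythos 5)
Your proof is correct and follows essentially the same route as the paper's: assume a Cauchy subsequence, pass to the limit projection, show that the range of the limit lies in $E^-$ with forward orbit trapped in $E^-$ (using that the $E_{n_k}$ absorb arbitrarily many forward iterates as $n_k\to\infty$ and that $E^-$ is closed), and contradict the absence of a hyperbolic invariant subspace. The only cosmetic difference is that you work with a single limiting vector and invoke the second item of Theorem~\ref{thm decomposition}, whereas the paper argues at the level of the conjugated projections $T^l\circ\Pi_{n_k}\circ T^{-l}$ and builds the invariant subspace directly; your write-up is if anything more careful about the gap-metric dictionary and the nonvanishing of the limit.
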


\begin{proof}
If it contains a Cauchy subsequence  $(\Pi_{n_k})$, there exists $\Pi$ such that $\Pi_{n_k}\to \Pi.$ Let $F=Ker(I-\Pi)$. It follows that $F\subset E^-$; it also holds that   $T^l(F)\subset E^-$ for any $l>0$: in fact, let $T^l\circ \Pi_{n_k}\circ T^{-l}$ and observe that the sequence  converges to $T^l\circ \Pi\circ T^{-l}$ and since the images of the projectors are   $T^l(E_{n_k})= E_{n_k-l}$ which are contained in $E^-$ and therefore since $E^-$ is closed it follows that the image of $T^l\circ \Pi\circ T^{-l}$. Now, one can consider the subspace generated by the iterates of $F$ and it follows that this subspace is invariant and contained in $E^-$ and therefore it is hyperbolic; a contradiction since $T$ has no invariant hyperbolic subspace.

\end{proof}

\begin{claim}
\label{no cauchy} Let $X$ be a separable metric space and let   $Z$ be a countable subset $X$ such that it does not contain a  Cauchy subsequence (in other words, any subsequence in $Z$ is not a Cauchy subsequence). Then, there is $c>0$ and an infinite countable number of  subsets of $Z$,     $Z_1, \dots, Z_n, \dots $ such that 
\begin{enumerate}
\item each $Z_j$ is finite;
\item for any $i\neq j$ if $z\in Z_j$ and $z'\in Z_i$ then $dist(z,z')\geq c.$
\end{enumerate}

\end{claim}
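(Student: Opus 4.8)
The plan is to read the statement correctly and then reduce it to the existence of a single infinite uniformly separated subsequence, after which the sets $Z_j$ can simply be taken to be singletons. Condition (2) forces the $Z_i$ to be pairwise disjoint (a point cannot be at distance $\geq c>0$ from itself), but there is \emph{no} requirement that the $Z_j$ cover $Z$. Hence it suffices to produce a constant $c>0$ and an infinite set $\{w_1,w_2,\dots\}\subset Z$ with $d(w_i,w_j)\geq c$ for all $i\neq j$, and then set $Z_j:=\{w_j\}$. Note first that $Z$ must be infinite: a finite set admits a constant, hence Cauchy, subsequence, contradicting the hypothesis; in any case the conclusion requires infinitely many $Z_j$, so we work with $Z$ countably infinite.

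The key step is that a countable set with no Cauchy subsequence cannot be totally bounded. I would prove the contrapositive: if $Z$ were totally bounded, then any sequence of distinct points of $Z$ would admit a Cauchy subsequence, by the standard repeated-subdivision argument — cover $Z$ by finitely many balls of radius $1$, pass to one containing infinitely many terms of the sequence, then cover by finitely many balls of radius $1/2$, and so on, extracting a Cauchy subsequence diagonally. This uses only total boundedness (not completeness or separability). Since $Z$ is infinite and has no Cauchy subsequence, it is therefore not totally bounded, so there is a scale $\epsilon_0>0$ at which $Z$ cannot be covered by finitely many balls of radius $\epsilon_0$.

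Fixing such an $\epsilon_0$, I would extract the separated subsequence greedily. Choose $w_1\in Z$ arbitrarily; having chosen $w_1,\dots,w_n$, use that $B(w_1,\epsilon_0)\cup\cdots\cup B(w_n,\epsilon_0)$ is a finite union of $\epsilon_0$-balls and hence does not cover $Z$, to select $w_{n+1}\in Z$ lying outside all of them. By construction $d(w_i,w_{n+1})\geq \epsilon_0$ for every $i\leq n$, and the process never terminates, so the resulting infinite set $\{w_j\}$ is $\epsilon_0$-separated with distinct terms. Taking $c:=\epsilon_0$ and $Z_j:=\{w_j\}$ yields infinitely many finite subsets of $Z$ satisfying (1) and (2).

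The proof is short once the statement is parsed correctly, so the point I would flag is precisely that the $Z_j$ need not partition or even cover $Z$. This is not a cosmetic matter: in an infinite-dimensional space one can build a set $Z$ with no Cauchy subsequence that nonetheless contains, for every $\delta>0$, infinitely many points inside a single ball of radius $\delta$ (for instance rescaled orthonormal clusters centered at points escaping to infinity), and for such $Z$ no covering by finite, uniformly separated pieces can exist. Thus the entire content of the claim is the production of one separated subsequence, equivalently the failure of total boundedness, and singletons already realize the conclusion.
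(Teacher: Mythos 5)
Your proof is correct for the claim as literally stated, and it takes a genuinely different route from the paper's. You reduce the whole statement to the failure of total boundedness (no Cauchy subsequence forces $Z$ not totally bounded, hence a greedy selection yields an infinite $\epsilon_0$-separated subset) and then take the $Z_j$ to be singletons, which is shorter and more elementary. The paper instead tries, for each scale $1/2^k$, to partition $Z$ into pieces of diameter at most $1/2^k$ that are pairwise $1/2^k$-separated, and argues that if every scale produces an infinite piece one can nest those pieces and extract a Cauchy sequence; the surviving scale then gives a partition of \emph{all} of $Z$ into finite, uniformly separated pieces. That difference is the crux: in the application the sets $Z_j$ are the groups $P_k$ of projectors $\Pi_n$, and the construction of the subspaces $\hat E_k$ together with the definition of strong shifted hyperbolicity require $\mathbb{Z}=\bigsqcup_k P_k$, i.e.\ a covering of $Z$, which your singletons do not provide. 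On the other hand, your closing observation cuts the other way and is the most valuable part of your write-up: the clustered example you sketch shows that a covering of $Z$ by finite, pairwise $c$-separated pieces need not exist under the stated hypotheses, so the partition version is not a theorem at this level of generality; and indeed the paper's own argument has a gap at exactly this point, since it never justifies the existence of partitions with simultaneous diameter bound $1/2^k$ and separation $1/2^k$ (a chain of points at consecutive distances just below $1/2^k$ already obstructs this). In short, you completely prove the weaker, literal statement, while the paper aims at the stronger statement its application needs but does not fully establish it; any repair would have to use the specific structure of the projectors $\Pi_n$ rather than the metric hypothesis alone.
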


\begin{proof} Let us consider the sequences of positive numbers $(1/2^k)$ and for each $k$ let us consider a partition of $Z$ in subsets $Z^k_1, \dots, Z^k_n, \dots $ such that

\begin{enumerate}
\item diam $Z_j^k \leq 1/2^k$, that is, for any $z,z' \in Z^k_j$ then $dist(z,z')<1/2^k$;
\item for any $i\neq j$ if $z\in Z_j$ and $z'\in Z_i$ then $dist(z,z')\geq 1/2^k.$
\end{enumerate}

If for some $k$ holds that all the sets $(Z^k_i)_i$ are finite, then the claim is proved.

If not, if for any $k$ there is $Z^k_{n_k}$ which is infinite, then it can be chosen  an infinite sequences of nested sets $(Z^k_{n_k})$. Picking a point $z_k$ if any $Z^k_{n_k}$ it follows that $d(z_k, z_{k+1})< 1/2^k,$ then the sequences $(z_k)$ is a Cauchy sequences. A contradiction. 
\end{proof}

 Applying previous claim to the set of projectors $(\Pi_n)$, one get an infinite countable set $(P_k)$ of  subsets such that each $P_k$ is formed by a finite number of projectors.  Let us now consider for each $k$ the subspaces 
 $\hat E_k =  \oplus_{\Pi_i\in P_k} Ker(I-\Pi_i),$
 and let $\hat \Pi_k$ be the associated projector. By claim \ref{no cauchy} it holds that 
  there is  $c>0$,  such that for any $j, k$ \begin{equation}
 ||\hat \Pi_{k}-\hat \Pi_{j}||>c.
 \end{equation} 
Therefore,   there is $\beta<1$ such that for any unitarian vector it holds that
\begin{equation}
\label{bounded angle}
|< \hat \Pi_{k}(v),  \hat \Pi_{j}(v)>|<\beta.
\end{equation}

 In other words, we rearranged the subspaces $(E_n)$ in a collection of subspaces $(\hat E_k)$ in such a way that each $\hat E_k$ is only formed by a finite number of subspaces  and the ``slope" between the subspaces $(\hat E_k)$ is uniformly bounded by below. Also observe that since $\hat E_k$ is spanned by a finite union of subspaces of $(E_{-i})$ it follows that any vector in $\hat E_k $ has a forward iterate in $E^+.$ 

Now, it is consider a vector $v$ in the subspace generated by the subspace $(\hat E_k)$,  then  $v$ can be written  as a finite sum
$v=v_{j_1}+...+ v_{j_n}$ with $v_{j_i}= \hat \Pi_{j_i}(v).$

\begin{claim} It holds that there exists $C>0$ such that for any unitarian vector $v$ in the subspace generated by $(\hat E_k)_k$ it follows that  $|v_{j_i}|<C .$ 

\end{claim}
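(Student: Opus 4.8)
The goal is to show that the component–extraction maps $v\mapsto v_{j_i}=\hat\Pi_{j_i}(v)$ are uniformly bounded on the algebraic span of the $(\hat E_k)$; equivalently, that $(\hat E_k)$ is uniformly minimal, so that in the finite decomposition $v=\sum_i v_{j_i}$ each summand is controlled by $|v|$. Since we are in a Hilbert space, the natural device is to estimate the Gram relation
\[
|v|^2=\sum_i|v_{j_i}|^2+\sum_{i\neq l}\langle v_{j_i},v_{j_l}\rangle,
\]
and to bound the right-hand side below by a fixed multiple of $\sum_i|v_{j_i}|^2$. The bounded-angle estimate (\ref{bounded angle}) controls each cross term by $\beta\,|v_{j_i}||v_{j_l}|$, but this is not enough on its own: for a $v$ with arbitrarily many active blocks, the resulting estimate $[(1+\beta)-\beta n]\sum_i|v_{j_i}|^2$ degenerates as the number $n$ of blocks grows. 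The plan is therefore to upgrade the single constant $\beta$ to a geometrically decaying family of correlation bounds.

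First I would record that each $\hat E_k$ is assembled from finitely many of the iterate–subspaces $E_m=T^{-m}(E_0)$, with $E_m\subset E^-$ for $m\geq 0$ and $E_m\subset E^+$ for $m<0$, and that on these sides $T^{-1}|_{E^-}$ and $T|_{E^+}$ are uniform contractions of rate $\lambda<1$. The quantitative heart of the argument is to combine this contraction with the separation $\|\hat\Pi_k-\hat\Pi_j\|>c$ furnished by Claim \ref{no cauchy} to obtain, for blocks of well-separated index, a decay estimate of the form $|\langle v_{j_i},v_{j_l}\rangle|\leq C_0\,\lambda_0^{|i-l|}\,|v_{j_i}||v_{j_l}|$ with a uniform $\lambda_0<1$ (blocks being ordered, say, by the least index they contain). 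Granting this, the off-diagonal sum is dominated by a convergent geometric series uniformly in the number of active blocks, so the Gram form is diagonally dominant and $|v|^2\geq\kappa\sum_i|v_{j_i}|^2$ for a fixed $\kappa>0$; taking $C=\kappa^{-1/2}$ closes the claim.

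The main obstacle is exactly this decay estimate: inequality (\ref{bounded angle}) is only a pairwise bound, whereas uniform boundedness of the projections requires the angle between each block $\hat E_k$ and the closed span of \emph{all} the remaining blocks to stay bounded away from zero. To produce it I expect to have to use the dynamics to \emph{isolate} the finite index set $P_k$ defining $\hat E_k$: applying a suitable power $T^N$ so that, by the contraction on $E^{\pm}$, the part of $v$ in $\hat E_k$ and the part in the other blocks are driven to incomparable scales, thereby converting the separation of the projectors into a uniform lower bound on these angles. An alternative is a contradiction argument: were the components unbounded, suitably normalized unit vectors would exhibit a near-linear dependence among the $\hat E_k$, forcing $\hat\Pi_k$ and $\hat\Pi_j$ arbitrarily close and contradicting $\|\hat\Pi_k-\hat\Pi_j\|>c$. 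However, since bounded sets are not precompact in infinite dimensions, extracting the limiting dependence rigorously is delicate, which is why I would favour the direct Gram-matrix route built on the hyperbolic decay estimate.
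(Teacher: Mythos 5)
Your proposal is a plan rather than a proof: everything rests on the geometric decay estimate $|\langle v_{j_i},v_{j_l}\rangle|\leq C_0\,\lambda_0^{|i-l|}\,|v_{j_i}||v_{j_l}|$, and you never establish it --- you explicitly defer it as ``the main obstacle'' and only gesture at two possible routes (isolating a block by applying a large power of $T$, or a contradiction argument you yourself judge delicate) without carrying either out. Since a uniform lower bound on the Gram form is exactly the content of the claim, leaving that estimate unproved leaves the claim unproved. Note also that the blocks $\hat E_k$ are produced by the combinatorial partition of Claim~\ref{no cauchy} and carry no natural linear order compatible with the dynamics, so even the meaning of $|i-l|$ in your proposed decay bound would need to be set up before one could try to prove it.

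That said, your diagnosis of why the naive argument fails is correct, and it is worth saying that it applies verbatim to the route the paper actually takes. The paper expands $|v|^2=\sum_k|x_k|^2+\sum_{i\neq j}x_ix_j\langle\hat v_i,\hat v_j\rangle$, bounds each cross term by $\beta|x_i||x_j|$ using (\ref{bounded angle}), and then asserts $\sum_{i\neq j}|x_i||x_j|\leq[(\sum|x_i|^2)\cdot(\sum|x_j|^2)]^{1/2}=\sum_i|x_i|^2$ ``by Cauchy--Schwartz''. For a double sum this is not a valid application of Cauchy--Schwarz: $\sum_{i\neq j}|x_i||x_j|\leq(\sum_i|x_i|)^2$ can be as large as $(n-1)\sum_i|x_i|^2$ when $n$ blocks are active, which is precisely the degeneration in $n$ that you point out. (Unit vectors at the vertices of a regular simplex show that a uniform pairwise bound $\beta$ on inner products cannot by itself force uniform minimality of an arbitrarily large family.) So your instinct that one must feed in more than pairwise separation --- e.g.\ summability of the off-diagonal Gram entries coming from the hyperbolic contraction --- is the right one, and it identifies a genuine weakness in the published argument; but your proposal stops exactly where that real work begins, so it does not yet constitute a proof.
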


\begin{proof} 
Let us write $v$ as $\sum_k x_k \hat v_k$ where $\hat v_k$ is the normalized vector $\frac{v_k}{|v_k|}$; so 
$$|v|^2=\sum x_k^2+ \sum_{i\neq j} x_i x_j <\hat v_i, \hat v_j>.$$

By Cauchy-Schwartz, and the fact that  $|<\hat v_i, \hat v_k>|< |<\hat \Pi_i,\hat \Pi_k>| <\beta <1 $ it follows that 

 \begin{eqnarray*}
 |\sum x_i.x_j<\hat v_i, \hat v_k>| &<& \sum|x_i.x_j||<\hat v_i, \hat v_k>|\\& \leq& \beta [(\sum |x_i|^2) \cdot (\sum |x_j|^2)]^{1/2}= \beta \sum |x_i|^2. 
  \end{eqnarray*}
 In particular, 
$\sum x_i.x_j<\hat v_i, \hat v_k> \geq -\beta \sum |x_i|^2,$
therefore

 \begin{eqnarray*}
 |v|^2& =&\sum |x_k|^2+ \sum_{i\neq j} < \hat v_i, \hat v_j> x_i x_j \\
 &\geq &(1-\beta)\sum |x_k|^2- \beta \sum |x_k|^2 \\
 &=& (1-\beta)\sum |x_k|^2,
 \end{eqnarray*}
  therefore $\sum |v_k|^2$ is bounded by $\frac{1}{1-\beta}|v|^2$.
\end{proof}


\begin{claim}\label{claim.closed} Any vector in the closure of subspace generated by $(\hat E_k)_k$ can be written as convergent infinite sum of vectors in the sequence of subspace $(\hat E_k)_k$.

\end{claim}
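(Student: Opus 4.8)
The plan is to upgrade the finite decompositions available on the algebraic span $V_0 := \mathrm{span}\big(\bigcup_k \hat E_k\big)$ to convergent infinite ones on its closure, using as the single engine the $\ell^2$-stability estimate of the previous claim: every $v \in V_0$ has a unique decomposition $v = \sum_k v_k$ with $v_k \in \hat E_k$ (finitely many nonzero) and $\sum_k |v_k|^2 \le \frac{1}{1-\beta}\,|v|^2$.

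First I would fix $w \in \overline{V_0}$ and pick $v^{(m)} \in V_0$ with $v^{(m)} \to w$. Applying the stability estimate to the differences $v^{(m)} - v^{(l)} \in V_0$ gives $\sum_k |v^{(m)}_k - v^{(l)}_k|^2 \le \frac{1}{1-\beta}|v^{(m)} - v^{(l)}|^2 \to 0$. Hence, for each fixed $k$, the sequence $(v^{(m)}_k)_m$ is Cauchy in the closed subspace $\hat E_k$ and converges to some $w_k \in \hat E_k$; the convergence is moreover uniform in $\ell^2$, whence $(w_k)_k \in \ell^2$ with $\sum_k|w_k|^2 \le \frac{1}{1-\beta}|w|^2 < \infty$. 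This produces the candidate coordinates $w_k$, and one checks they do not depend on the approximating sequence.

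It then remains to prove $w = \sum_k w_k$, with the series converging in $\BB$. For each $K$ I would write
\[
w - \sum_{k\le K} w_k = \big(w - v^{(m)}\big) + \sum_{k>K} v^{(m)}_k + \sum_{k\le K}\big(v^{(m)}_k - w_k\big);
\]
for fixed $K$ the first term is small for large $m$, and the third (a finite sum) is small for large $m$ by the previous step. The delicate term is the tail $\big|\sum_{k>K} v^{(m)}_k\big|$: since the $\hat E_k$ are mutually transverse but not orthogonal, the smallness of $\sum_{k>K}|v^{(m)}_k|^2$ does not on its own control this tail in norm.

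The crux — and the step I expect to be the main obstacle — is therefore to bound this tail uniformly, i.e. to show that the partial-sum operators $S_K : v \mapsto \sum_{k\le K} v_k$ are uniformly bounded on $V_0$; equivalently, that $(\hat E_k)$ is a Schauder decomposition of $\overline{V_0}$, so that $\|v - S_K v\|$ can be made small uniformly enough to pass to the limit. I would try to extract this from the uniform separation $\|\hat\Pi_i - \hat\Pi_j\| > c$ of the orthogonal projectors, which through the angle bound (\ref{bounded angle}) is exactly what rules out a degeneracy of the family; this geometric input is what must be converted into the required operator bound, and is where the genuine work lies. Once $S_K$ is uniformly bounded the tail is controlled, the three-term splitting closes, and $w = \sum_k w_k$ follows, establishing the claim.
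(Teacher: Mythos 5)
Your skeleton is the same as the paper's: approximate $w$ by $v^{(m)}$ in the algebraic span $V_0$, apply the quantitative estimate of the preceding claim to the differences $v^{(m)}-v^{(l)}$ to extract limit components $w_k\in\hat E_k$ with $\ell^2$ control, then argue $w=\sum_k w_k$. The one step you leave open --- a uniform bound on the partial-sum/tail operators --- is indeed the only delicate point, but it does not require new work or a further appeal to the separation of the projectors: it is the \emph{reverse} half of the norm comparison, and it drops out of the very computation used in the preceding claim. The cross-term estimate $\bigl|\sum_{i\ne j}x_ix_j\langle\hat v_i,\hat v_j\rangle\bigr|\le\beta\sum_k x_k^2$ yields not only $|v|^2\ge(1-\beta)\sum_k|v_k|^2$ but also $|v|^2\le(1+\beta)\sum_k|v_k|^2$ for every $v\in V_0$; the paper's proof of this claim invokes precisely this two-sided equivalence when it writes $|v^n-v^m|\approx\sum_k|v^n_k-v^m_k|^2$. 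Applying the upper bound to $\sum_{k\in F}v_k$, which is itself an element of $V_0$ whose $\hat E_k$-components are $v_k$ for $k\in F$ and $0$ otherwise, gives $\bigl|\sum_{k\in F}v_k\bigr|^2\le(1+\beta)\sum_{k\in F}|v_k|^2$ for every finite $F$ --- exactly the uniform boundedness of your $S_K$, i.e.\ the statement that $(\hat E_k)$ is an (unconditional) Schauder decomposition of $\overline{V_0}$. With that in hand your three-term splitting closes: $\sum_{k\le K}w_k$ is Cauchy because $(|w_k|)_k\in\ell^2$; the middle term vanishes for $K$ large at fixed $m$ since $v^{(m)}$ has finitely many nonzero components; and $\bigl|\sum_{k\le K}(v^{(m)}_k-w_k)\bigr|^2\le(1+\beta)\sum_k|v^{(m)}_k-w_k|^2$ is small uniformly in $K$. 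So the gap you flag is real as written, but it closes in one line from material you already have; the projector separation $\|\hat\Pi_i-\hat\Pi_j\|>c$ enters only through the constant $\beta<1$ and need not be revisited. (Be aware that both your argument and the paper's rest entirely on that cross-term estimate from the previous claim, which you are entitled to take as given here.)
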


\begin{proof}
 
Let  $v^n$ be a convergent sequences to a vector $w$;  then $v^n=\sum_k v^n_k$ with $v^n_k\in \hat E_k$ (observe that since $v^n$ is in the subspace generated by $(\hat E_k)_k$, for each $n$ only finite components are different than zero) and since the sequences is a Cauchy sequences and the norm of the vectors are compared to the sum of the square of the norm of the components, (i.e.: $|v^n-v^m|\approx \sum_k |v^n_k-v^m_k|^2$), it follows that  the sequences  $(v^n_k)_n$  are uniform Cauchy sequences and they converge to some vector $w_k$; since each  $\hat E_k$ is closed (there are a finite sum of closed subspaces) it follows that $w_k\in \hat E_k$ and so $w_k$ is the component of the vector $w$ in $\hat E_k$ and therefore $w=\sum w_k.$

\end{proof}

Recalling that each $\hat E_k$ is generated by only finite iterates of backward iterates of the transition subspace, then the theorem follows from previous claim.

\section{ Generalized hyperbolicity  is an open property: Proof of theorem \ref{robustness}}
\label{sec robustness}
We want to prove that for a an operator $S$ close to $T$ there exists a splitting $E^-_S\oplus E^+_S$ that satisfying the requirements of the definition of generalized hyperbolicity.

Let $E^-\oplus E^+$ be the subbundle decomposition for $T$ and since both  subspaces, $E^-$ and $E^+$, are closed and complementary in $\cal B$ it follows that for any vector $v\in \cal B$ there are unique vectors $v^-\in E^-, v^+\in E^+$ such that $v=v^-+v^+.$ Let $\Pi^\pm$ be the projection over $E^\pm $ respect to the splitting $E^-\oplus E^+$, i.e., $\Pi^\pm(v)= v^\pm.$  Provided $\alpha>0$ it is defined $C_\alpha(E^-)$, the cone of size $\alpha$ along the subspace $E^-$,  as the following set:

$$C_\alpha(E^-):=\{ v: \frac{|\Pi^+(v)|}{|\Pi^-(v)|} <\alpha\}.$$

\begin{remark}\label{contract} There exists $\lambda<1$ such that
$\frac{ |S(\Pi^+(v))|}{|S(\Pi^-(v))|} <\lambda.$ 
\end{remark}
In fact, that remark holds since vectors in $E^+$ are forwarded contracted, vector in $E^-$ are forwarded expanded and $S$ is close to $T$. Observe that the remark it does not state that the cone is forward invariant and also the statement is only about the iterates of the components in $E^-$ and $E^+.$

\vskip 5pt

\noindent{\em Proof of theorem \ref{robustness}:}
 Let $S=T+P$ be a linear map close to $T$. We choose $\alpha$ small, but larger than the norm of $P$, and construct a sequence of closed subspaces $M^0=E^-, M^1, \dots, M^n$ all of them complementary to $E^+$ and verifying:

\begin{enumerate}
\item $\Pi^-(M^j)=E^-$,
\item $M^j\subset S(M^{j-1}),$
  \item $M^j\subset C_\alpha(E^-)$,
  \item $S: S^{-1}(M^j)\to M^j$ is expanding (in the sense that the norm of the operator is larger than $1$). 
\end{enumerate}

That sequences is obtained by induction: assuming that  $\Pi^-(M^{j-1})=E^-$, let $M^{j-1}_{-1}$ be the subspace contained in $ M^{j-1}$ such that $\Pi^-(M^{j-1}_{-1})=T^{-1}(E^-)$ and we define $$M^j=S(M^{j-1}_{-1});$$ 
 observe that for $j=1$, $M^0_{-1}$ is $T^{-1}(E^-)$.



Let us prove first that $M^j\subset C_\alpha(E^-)$. Given $w\in M^{j-1}_{-1}$ it have to be shown that $S(w)\in C_\alpha(E^-)$, and for that one have to compute the norm of $\Pi^- \circ S(w)$ and $\Pi^+\circ S(w)$ and show that $\frac{|\Pi^+\circ S(w)|}{|\Pi^-\circ S(w)|} <\alpha.$ Let $w$ be an unitarian vector in  $ M^{j-1}_{-1}$; since $w\in C_\alpha(E^-)$ follows that $w= w^-_{-1}+w^+$ for some vector $w^+\in E^+$ and $w^-_{-1}\in T^{-1}(E^-)$ with $\frac{|w^+|}{|w^-_{-1}|}< \alpha.$ 


It follows that $S(w)= (T+P)(w)= T(w)+P(w)= T(w^-_{-1})+T(w^+)+ \Pi^-\circ P(w)+  \Pi^+\circ P(w)$. Observe that $T(w^-_{-1})\in E^-$, $|T(w^-_{-1})|\geq \la^{-1}.|w^-_{-1}|$  $T(w^+)\in E^+$, $|T(w^+)|\leq \la.|w^+|$ and $|\Pi^\pm\circ P(w)|<\epsilon. C.|w|,$ where $C$ is such that $||\Pi^{\pm}||< C.$  Then,
 \begin{eqnarray*}
 \label{inequality dominated}
 \frac{|\Pi^+(S(w))|}{|\Pi^-(S(w))|} \leq \frac{ \la|w^+|+C.\epsilon}{\la^{-1}|w^-_{-1}|-C.\epsilon}.
 \end{eqnarray*}
 
 Observe that for $\epsilon=0$ it follows that $\frac{ \la|w^+|}{\la^{-1}|w^-_{-1}|} < \la^{2}.\alpha$ so for  $\epsilon $ small and so $P$ close enough to zero it holds that 
$$\frac{|\Pi^+(S(w))|}{|\Pi^-(S(w))|}  < \la.\alpha .$$

 Let us prove now  that $\Pi^-(M^j)=E^-$ and $S: S^{-1}(M^j)=M^{j-1}_{-1}\to M^j$ is expanding:  observe that
 $ T\circ\Pi^-: M^{j-1}_{-1}\to E^-$ is surjective and  expanding (since $T\circ \Pi^-_{/M^{j-1}_{-1}}$ is close to $T: T^{-1}(E^-)\to E^-$), so, if $P$ is close to zero and $\alpha$ small, it follows that $\Pi^-\circ (T+P): M^{j-1}_{-1}\to E^-$ is also close to $T\circ \Pi^-$ and so it is  surjective and  expanding; hence, for $\alpha$ small follows that $\Pi$ is close to the identity and therefore  $S:S^{-1}(M^j)=M^{j-1}_{-1}\to M^j$  is expanding.

 Now, it is claimed  that the sequences $M^n$ ``has a limit". To precise that, observe that each $M^n$ are closed subspaces complementary to $E^+$ so for any vector $v$ there is a unique decomposition $v=v^n+v^+$ with $v^n\in M^n$ and $v^+\in E^+;$ so, one can define the projectors $\Pi^n$ as $\Pi^n(v)= v^n.$ It is going to be proved that the operators $\Pi^n$  converge to a linear operators $\Pi$ and it is defined $M$ as the kernel of is $I-\Pi$ and observe that in that case, that $M:=Ker(I-M)$ verifies that  $S^{-1}(E^-_S)\subset E^-_S$. In fact, since  $S^{-1}(M^n)\subset M^{n-1}$ it holds  that  $(I-\Pi^{n-1})\circ S^{-1}\circ \Pi^n=0$ and given that  $\Pi^n\to \Pi$ then $(I-\Pi)\circ S^{-1}\circ \Pi=0$ and so $S^{-1}(M)\subset M.$  The fact that $S^{-1}_{E^-_S}$ is a contraction  follows from the fact that $S^{-1}_{/M^n}$ is a contraction.

 To show the existences of the limit is enough to show that 
$$||\Pi^n-\Pi^{n-1}||< \lambda. ||\Pi^{n-1}-\Pi^{n-2}||.$$

Since $(M^{j}_{-1}\subset M^j$,  it holds that  $||\Pi^{n-1}_{|M^{n-1}_{-1}}- \Pi^{n-2}_{|M^{n-2}_{-1}},||\leq ||\Pi^{n-1}- \Pi^{n-2}||$ and  using remark \ref{contract} follows that

 \begin{eqnarray*}
  & & ||\Pi^{n}-\Pi^{n-1}||=\\
  & & ||S\circ \Pi^{n-1}_{|M^{n-1}_{-1}}- S\circ \Pi^{n-2}_{|M^{n-2}_{-1}}||< \lambda. ||\Pi^{n-1}_{|M^{n-1}_{-1}}-\Pi^{n-2}_{|M^{n-2}_{-1}}||= \lambda. ||\Pi^{n-1}-\Pi^{n-2}||.
 \end{eqnarray*}
The other two item in the thesis holds immediately since $S^{-1}(M^n)\subset M^{n-1}.$

\vskip 5pt

\qed

\section{Large bounded set for perturbation of a  generalized hyperbolic: Proof of theorem \ref{large B}}

To prove theorem \ref{large B} it is used that generalized hyperbolicity is an open property and that  they exhibit the  shadowing property. 



\vskip 5pt

{\em Proof of theorem \ref{large B}:} 
First observe that $S$ is generalized hyperbolic  and therefore $S$ also satisfies the shadowing property, in the sense that there exists $L$ such that any $C-$pseudo-orbit is $L.C-$shadowed.

Let us consider $B_N(T)$ and let $x$ be a vector in $B_N(T)$. Therefore, if $S$ $\delta'-$close to $T$ then, the orbit of $x$ is a $\delta.N-$pseudo orbit of $S$ and  so  it is $L.\delta.N-$shadowed by an orbit $\{S^j(y)\}_{j\in \ZZ}$ of $S$; in particular, for any $j$ holds that  $|S^j(y)|\leq (L.\delta+1).N$ and so for $\delta$ such that $\delta.L.N<\epsilon/2$ it holds that $B_{(1+\epsilon)N}(S)$ is $\epsilon/2-$dense in $B_N(T).$

\section{Spaces supporting shifted hyperbolic operators}
\label{sec.spaces}

We say $\BB$ supports a shifted hyperbolic operator if there is $T \in L_{aut}(\BB)$ such that $T$ is shifted hyperbolic. The splitting $\BB = E^+ \oplus E^-$ for a shifted hyperbolic operator requires that $E^+$ and $E^-$ be infinite dimensional. Indecomposable Banach spaces, ones for which $\BB = M \oplus N$ implies either $M$ or $N$ is finite dimensional, cannot support a shifted hyperbolic operator. There are Banach spaces for which every closed subspace is indecomposable.
$\newline \indent$
Recall that a basis is a sequence $\{e_n\}$ for which every $x \in \BB$ can be written uniquely as $x = \sum a_ne_n$. A basic sequence is a sequence that is a basis for its closed linear span. Every infinite dimensional Banach space contains an infinite basic sequence, but does not necessarily have a basis. If $\BB$ has a basis the shift $e_n \mapsto e_{n+1}$, extended linearly, need not be bounded, since the convergence of $\sum_{n\geq 1}a_ne_n$ doesn't imply the convergence of $\sum_{n\geq 1}a_ne_{n+1}$. Note that basic sequences do not need to be normalized. The following theorem is a concrete example of the general construction presented in example $6$.

\vskip 5pt

{\em Proof of theorem \ref{Support}:} Let $E_1$ and $E_2$ be Banach spaces which have bases $\{f_n\}_{n\geq 1}$ and $\{f'_n\}_{n \geq 1}$, respectively, that admit bounded right shifts $R_1$ and $R_2$. Note that 

$$e_{2n} = f_n, \; \; \mbox{and} \; \; e_{2n-1} = f'_n \; \; \; n \geq 1$$ 

is a basis of $\BB = E_1 \oplus E_2$, which we take to be equipped with max norm. Define $T: \BB \rightarrow \BB$ on basis elements: 
\begin{enumerate}
    \item[--]$T(e_{2n}) = \alpha e_{2n+2}, \; n \geq 0 $
    \item[--] $T(e_{2n-1}) = \beta e_{2n-3} \; n \geq 2 $
    \item[--] $T(e_1) = 2e_0$
\end{enumerate}
where $\alpha, \beta$ are chosen so $0 < \alpha < \frac{1}{2|R_1|}$ and $0 < \beta^{-1} < \frac{1}{2|L_2|}$, where $L_2: [f'_n]_{n \geq 2} \rightarrow [f'_n]_{n \geq 1}$ is the map $f'_n \mapsto f'_{n-1}$. Since $T$ is a continuous bijection it is an automorphism. $E_1 = E^+_T$ and $E_2 = E^-_T$ form a generalized hyperbolic splitting for $T$, with $e_1$ the transition vector.

\begin{example}Let $\{e_n\}_{n\geq 1}$ $c_0(\mathbb{N})$ be the canonical basis. The ``summing basis", $\{f_n\}$, where $f_n = e_1 + ... + e_n$ admits a bounded shift.
\end{example}
That $\{f_n\}$ is a basis, but not an unconditional basis (the sum does not converge for all permutations) is shown in \cite{AK}:  $\sum_{n\geq1}a_nf_n$ converges if and only if $a_n = b_{n+1} - b_n$ for some $\{b_n\} \in c_0$. The shift $S(f_n) = f_{n+1}$ is bounded, since if $x = \sum_{n\geq0}a_nf_n$, then
$$S(x) = \sum_{n\geq1}a_nf_{n+1} = \sum_{n\geq 1}a_n(f_n + e_{n+1}) = \sum_{n\geq 1}a_nf_n + \sum_{n\geq 1}a_ne_{n+1}$$

The first sum converges by hypothesis, and the second sum converges since $\{a_n\} \in c_0$. Since $c_0 \oplus c_0 \cong c_0$, the operator $T$ defined as in the proof of theorem $8$ is shifted hyperbolic. Note that it is not necessarily the case that $E^+ \cong E^-$. Using the notation of the previous theorem, take $E_1 = l^p(\mathbb{N})$ and $E_2 = l^{p'}(\mathbb{N})$ for $p \neq p'$, with right shifts on the canonical bases. Note that non-separable spaces, such as $l^{\infty}$, support shifted hyperbolic operators, but do not have bases.




\section{Questions}

In the present paper, it has been proved that under the hypothesis of generalized hyperbolicity, the non-wandering set is robustly  transitive. However, the non-wandering can change dramatically under perturbation (for instance, for a strong shifted hyperbolic its coincides with the whole space but for some perturbations, even  it remains infinitely dimensional, the non-wandering  `` becomes smaller"). In particular, that shows that the dynamics of two  nearby strong shifted hyperbolic operators can not be conjugated. However, one can wonder:

\begin{question} Are nearby strong shifted hyperbolic operators linearly semiconjugated?

\end{question}

In \cite{BM} it was proved that perturbations of some weighted shifts (the ones that are generalized hyperbolic in our context) are structural stable whenever are consider small Lipschitz bounded perturbations of them. Based on that, it is natural to wonder the following:

\begin{question} Are generalized hyperbolic linear operator  structural stable under the type of perturbations considered in \cite{BM}?

\end{question}

From \cite{FSW}, there dense perturbations of generalized hyperbolic such that their bounded set does not coincide with the whole Banach space:

\begin{question} Given a transitive hyperbolic operator, can the perturbations which remain transitive be classified?  

\end{question}

In view of theorem \ref{thm dichotomy}, previous question can be reformulated in terms of characterizing the perturbations that do not create a hyperbolic component. The result in \cite{FSW} proves that non-cyclic operators are dense and this is done through a perturbation techniques; more precisely,  the non-cyclical operators close that approximates, differs to the initial given operator only on a finite dimensional subspace; therefore, it should be considered ``infinite dimensional" perturbations.

In \cite{B} (see theorem 2.2) it is formulated a sufficient condition for transitivity, we wonder:

\begin{question} Does any generalized hyperbolic operator that its bounded sets span the Banach space satisfy the transitivity condition formulated by theorem 2.2 in \cite{B}?

\end{question}

On the other hand:

\begin{question} Do generalized hyperbolic operators admit a purely spectral characterization?

\end{question}
In view of theorem \ref{thm decomposition}, it would be enough to answer previous question for strong shifted hyperbolic.


\begin{question}If $\BB$ is a separable Banach space, does every transitive shifted hyperbolic operator factor through a hyperbolic one as in example $6$? 
\end{question}

\section{Further work beyond linear operators and other questions}

As it was mentioned before, one can consider non-linear bounded  perturbations of linear operators an in particular, in a more general context diffeomorphisms acting on a Banach  space endowed with the topology induced by the $C^r-$distance. For instance, two diffeomorphisms are $C^1-$ close if  given $\eps>0$, $f $ and $g$ are $C^1-$close if  for any $x\in \BB$,
\begin{itemize}
    \item[--] $|f(x)-g(x)|<\eps$,
    \item[--] $||D_xf-D_xg||<\eps.$
\end{itemize}

Observe that in this sense, two linear operators which are close in the topology of the norm are not $C^0$ close. In fact, the topology considered is closer to the one used in \cite{BM}.

For those type of diffeomorphisms, the notion is generalized linear operator can be extended.

\begin{definition}\label{dGWS def}

Let $f\in Diff(\BB).$  It is said that $f$ is a   generalized  hyperbolic diffeomorphisms if the derivative  has bounded   norm and for any $x\in \BB$ there exists a decomposition $\BB_x = E^-_x\oplus E^+_x$ such that
\begin{itemize}
\item $Df(E^+_x)\subset E^+_{f(x)}$ and $Df^{-1}(E^-_x)\subset E^-_{f^{-1}(x)}$;
    \item $Df_{|E^+_x}$ and $Df^{-1}_{|E^-_x}$ are uniform contractions.
\end{itemize}

\end{definition}

Observe that the definition is 
a natural extension of the generalized hyperbolic one, but in the present case the decomposition changes point by point and the semi-invariance now is provided by the derivative of the map. In particular, any generalized hyperbolic operator is a generalized hyperbolic diffeomorphisms.

A similar proof to the one done in theorem \ref{robustness}, shows that the class of generalized hyperbolic diffeomorphisms is an open class (and in view of previous paragraph, a non-empty class).

Similarly, the bounded set can defined and again, there are generalized hyperbolic diffeomorphisms that the closure of their bounded set coincides with the whole space. 

As in the context of classical theory of hyperbolic dynamical systems, the goal is to show that there exist stable and unstable manifolds for generalized hyperbolic diffeomorphisms in the sense that the stable is forward invariant, the unstable is backward invariant. With this construction in mind, it would follow that it is possible to get a similar result to \ref{bounded-transitive}. 

\begin{question} Are generalized hyperbolic diffeomorphisms stable?

\end{question}

\vskip 10pt

\hspace{-1.2cm}
\begin{tabular}{l l l l l}
\emph{Patricia Cirilo}
& &\hskip 50pt 
\emph{Bryce Gollobit}
& & \hskip 50pt 
\emph{Enrique Pujals}
\\

UNIFESP  && \hskip 30pt Graduate Center-CUNY
&&\hskip 30pt Graduate Center- CUNY

\end{tabular}

\end{document}